\documentclass{llncs}
\title{A Canonical Characterization of the Family of Barriers in General Graphs}

\author{Nanao Kita}
\institute{Keio University, Yokohama, Japan\\
           \email{kita@a2.keio.jp}}

\usepackage{amsmath, amssymb}
\usepackage{algorithmic}
\usepackage{algorithm}
\usepackage{cite}
\usepackage[stable]{footmisc}
\usepackage[dvipdfmx]{graphicx}

\usepackage{setspace}
\usepackage{color}
\usepackage{comment}

\newcommand{\matchablesp}{factorizable~}

\newcommand{\zero}{balanced~}

\newcommand{\exposed}{exposed}

\newcommand{\Vg}{V(G)}

\newcommand{\dm}{\preceq}
\newcommand{\pardm}[1]{\dm_{#1}}
\newcommand{\yield}{\triangleleft}
\newcommand{\gpart}[1]{\mathcal{P}(#1)}
\newcommand{\pargpart}[2]{\mathcal{P}_{#1}(#2)}
\newcommand{\gsim}[1]{\sim_{#1}}
\newcommand{\hxg}[2]{H_{#2}(#1)}
\newcommand{\up}[1]{\mathcal{U}(#1)}
\newcommand{\upstar}[1]{\mathcal{U}^{*}(#1)}
\newcommand{\parup}[2]{\mathcal{U}_{#1}(#2)}
\newcommand{\parupstar}[2]{\mathcal{U}^{*}_{#1}(#2)}
\newcommand{\vup}[1]{U(#1)}
\newcommand{\vupstar}[1]{U^*(#1)}
\newcommand{\vparup}[2]{U_{#1}(#2)}
\newcommand{\vparupstar}[2]{U^*_{#1}(#2)}

\newcommand{\vcoup}[1]{{}^cU(#1)}

\newcommand{\what}{\widehat}
\renewcommand{\Gamma}{N}
\renewcommand{\Delta}{\triangle}

\newcommand{\astar}[2]{X}

\newcommand{\ignore}[1]{}
\newcommand{\sfig}[1]{{} }

\if0
\newcommand{\ignore}[1]{#1}
\newcommand{\sfig}[1]{{--#1--} }
\fi
\spnewtheorem{cclaim}{Claim}{\bfseries}{\rmfamily}
\spnewtheorem*{ac}{Acknowledgement}{\bfseries}{\rmfamily}
\spnewtheorem*{proofof}{Proof of}{\itfamily}{\rmfamily}
\spnewtheorem*{maintheorem}{Main Theorem}{\bfseries}{\rmfamily}

\begin{document}
\maketitle 
\pagestyle{plain}

\begin{abstract}
Given a graph, a barrier is  a set of vertices determined by 
the Berge formula---the min-max theorem characterizing the size of 
maximum matchings. 
The notion of barriers plays important roles 
in numerous contexts of matching theory, 
since barriers essentially coincides with dual optimal solutions 
of the maximum matching problem. 
%
%
%
In a special class of graphs called the elementary graphs,  
the family of maximal barriers forms a partition of the vertices; 
this partition was found by Lov\'asz and is called the canonical partition. 
The canonical partition has produced many fundamental results in matching theory, 
such as the two ear theorem.  
However, in non-elementary graphs, 
the family of maximal barriers never forms a partition,  
and there has not been the canonical partition for general graphs. 
In this paper, 
using our previous work,  
we give a canonical description  of structures of the odd-maximal barriers%
---a class of barriers including the maximal barriers---for general graphs; 
we also reveal structures of odd components associated with odd-maximal barriers. 
This result of us can be regarded as a generalization of Lov\'asz's canonical partition.

\end{abstract}

\section{Introduction}\label{sec:intro}

A {\em  matching} of a graph $G$ is a 
set of edges no two of which  have  common vertices.
A matching of cardinality $|V(G)|/2$ (resp. $|V(G)|/2 - 1$) is called 
a {\em  perfect matching} (resp. a {\em  near-perfect matching}).
We call a graph \textit{factorizable} if it has at least one perfect matching. 
Now let $G$ be a factorizable graph. 
An edge $e\in E(G)$  is called \textit{allowed} if 
there is a perfect matching containing $e$.
Let $\what{M}$ be the union of all the allowed edges of $G$. 
For each connected component $C$ of the subgraph of $G$ determined by $\what{M}$, 
we call the subgraph of $G$ induced by $V(C)$ as 
{\em factor-connected component} or {\em factor-component} for short. 
The set of all the factor-components of $G$ is denoted by $\mathcal{G}(G)$. 
Therefore, a factorizable graph is composed of 
factor-components and some edges joining between 
different factor-components. 
A factorizable graph with exactly one factor-component 
is called {\em elementary}.

Matching theory is of central importance in graph theory and combinatorial optimization,  
with numerous practical applications~\cite{cc2005}. 
In matching theory, the notion of barriers plays significant roles. 
Given a graph, we call a connected component of it with an odd (resp. even)  number of vertices 
\textit{odd component} (resp. {\em even component}). 
Given $X\subseteq V(G)$ of a graph $G$, 
we denote as $q_{G}(X)$ the number of odd components 
that the graph resulting from deleting $X$ from $G$ has;  
we denote the cardinality of a maximum matching of $G$ as $\nu(G)$. 
There is a min-max theorem called the {\em  Berge formula}~\cite{lp1986} that 
for any graph $G$,
$|V(G)| - 2\nu(G) = \mathrm{max}\{ q_{G}(X) - |X| : X\subseteq V(G) \}$.
%
A set of vertices that attains the maximum 
in the right side of the equation 
is called  a {\em  barrier}.  
Roughly speaking,  barriers essentially coincide with dual optimal  solutions of 
the maximum matching problem, 
and decompose graphs so that 
one can see the structures of maximum matchings. 
However, compared to numerous results on maximum matchings, 
``much less is known about barriers~\cite{lp1986}''.

There is a structure of elementary graphs called the {\em canonical partition}; 
Kotzig first introduced it as the equivalence classes of a certain equivalence relation, 
and later Lov\'asz reformulated it from the point of view of barriers, 
stating that 
the family of maximal barriers forms a partition of the vertices in elementary graphs. 
This reformulation by Lov\'asz has produced many fundamental properties in matching theory 
such as the \textit{two ear theorem}~\cite{lp1986, cc2005}, 
and the \textit{brick decomposition}
 or the \textit{tight cut decomposition}, 
 and underlies polyhedral studies of matching theory; 
 see the survey article~\cite{clm2003}.  

However, in non-elementary graphs, 
the family of maximal barriers never forms a partition of the vertices,  
and there has not been known 
the counterpart structure 
of Lov\'asz's canonical partition for general graphs. 
In this paper, therefore, 
we reveal canonical structures of maximal barriers 
and obtain a generalization of 
Lov\'asz's canonical partition for general graphs; 
here, our previous work on 
canonical structures of general factorizable graphs~\cite{kita2012a, kita2012b},  
the {\em generalized cathedral structure} (see Section~\ref{sec:canonical}), 
serves as a language to describe barriers. 
(Actually, we work on a wider notion called {\em odd-maximal barriers}; 
see Section~\ref{sec:aim}. ) 
In \cite{kita2012a, kita2012b}, 
we defined an equivalence relation and introduced 
a generalization of the canonical partition based on Kotzig's formulation: 
the {\em generalized canonical partition}. 
In this paper,  we show that 
it can be also regarded as 
a generalization based on Lov\'asz's formulation, stating that  
the family of equivalence classes of the generalized canonical partition 
are ``atoms'' that constitute (odd-)maximal barriers in general graphs 
(which shall be introduced in Section~\ref{sec:barrier}). 
We also reveal the structure of odd components associated with (odd-)maximal barriers. 
 
Because the canonical partition 
and the notion of barriers are important, 
we are sure that our result will produce many applications in matching theory. 
There has been known a close relationship between 
algorithms in matching theory, 
barriers, and canonical structure theorems~\cite{lp1986, cc2005}; 
therefore, 
 our result will have algorithmic applications. 
Lov\'asz's canonical partition 
has been the foundation in the study of polyhedral aspects of matchings; 
therefore, 
 our results will make a contribution to this field. 
So far we have already obtained some consequences~\cite{kita2012c} 
on  the {\em optimal ear-decomposition}~\cite{frank1993}. 

\section{Preliminaries}\label{sec:pre}
\subsection{Definitions and Some Preliminary Facts}

In this paper we mostly observe those given by Schrijver~\cite{schrijver2003} for 
standard definitions and notations. 
We list here those additional or somewhat non-standard. 

Hereafter for a while let $G$ be a graph.  
We denote the vertex set and the edge set of $G$ 
as $V(G)$ and $E(G)$. 
Now for a while let $X\subseteq V(G)$. 
We define the {\em contraction} of $G$ by $X$ as 
the graph obtained by contracting $X$ into one vertex, 
and denote it as $G/X$. 
For simplicity, 
we identify vertices, edges, subgraphs of $G/X$ 
with those of $G$ naturally corresponding to them. 
The subgraph of $G$ induced by $X$ is denoted by $G[X]$. 
We denote by $G-X$ the graph $G[\Vg\setminus X]$.  

The set of edges that has one end vertex  in $X\subseteq V(G)$ and 
the other end in $Y\subseteq V(G)$ is denoted as $E_G[X, Y]$.
We denote $E_G[X, V(G)\setminus X]$ as $\delta_G(X)$.
We define the \textit{set of neighbors}  of $X$ as
the set of vertices in $V(G)\setminus X$ that 
are adjacent to vertices in $X$, and denote as $\Gamma_G(X)$.
We sometimes denote $E_G[X, Y]$, $\delta_G(X)$, $\Gamma_G(X)$ as just  
$E[X,Y]$, $\delta(X)$, $\Gamma(X)$
if their meanings are apparent from contexts. 
Given two graphs $G_1$ and $G_2$, 
we denote by $G_1 +  G_2$ 
the graph $\left( V\left( G_1 \right) \cup V\left( G_2\right),
 E\left( G_1\right)\cup E\left( G_2\right) \right)$.  
Let $\hat{G}$ be a supergraph of $G$.
For $e = uv\in E(\hat{G})$,
$G-e$ means the graph $(V(G), E(G)\setminus \{e\})$. 
For a set of edges $F = \{e_i\}_{i=1}^k$,
$G -F$ means the graph $G - e_1 -\cdots - e_k$. 
%
%

In many contexts, we often regard a subgraph $H$ of $G$ as 
a vertex set $V(H)$.  
For example, $G/H$ means $G/V(H)$. 
We treat paths and circuits as graphs.
For a path $P$ and $x, y \in V(P)$,
$xPy$ means the subpath of $P$ whose end vertices are $x$ and $y$. 

We say a matching $M$ of $G$ {\em exposes} $v\in V(G)$ 
if $\delta(v)\cap M = \emptyset $, 
otherwise say it {\em covers } $v$. 
For  a matching $M$ of $G$ and $u\in V(G)$ covered by $M$,
$u'$ denotes the vertex to which $u$ is matched by $M$.
For $X\subseteq V(G)$, 
$M_X$ denotes $M\cap E(G[X])$. 

Hereafter for a while let $M$ be a matching of $G$.
For a subgraph $Q$ of $G$, which is a path or circuit,
we call $Q$  $M$-{\em  alternating}
if $E(Q)\setminus M$ is a matching of $Q$.
Let $P$ be an $M$-alternating path of $G$ with end vertices $u$ and $v$.
If $P$ has an even number of edges 
and $M\cap E(P)$ is a near-perfect matching of $P$ exposing only $v$, 
we call it an $M$-{\em  \zero path} from $u$ to $v$.
We regard a trivial path, that is, a path composed of 
one vertex and no edges  as an $M$-\zero path.
If $P$ has an odd number of edges 
and $M\cap E(P)$ (resp. $E(P)\setminus M$) is a perfect matching of $P$,
we call it $M$-{\em  saturated} (resp. $M$-{\em  \exposed}).
%
%

 We say a path $P$ of $G$  is an {\em  ear relative to} $X\subseteq V(G)$ 
if both end vertices of $P$ are in $X$ while internal vertices are not.
We also call a circuit an ear relative to $X$ if exactly one vertex of it is in $X$. 
For simplicity, we call the vertices of $V(P)\cap X$ 
{\em end vertices} of $P$, even if $P$ is a circuit.
For an ear $P$ of $G$ relative to $X$,
we call it an $M$-{\em  ear}
if  $P-X$ is an $M$-saturated path. 
Given an ear $P$ and $Y \subseteq V(G)$, 
we say $P$ is {\em through } $Y$ if 
$P$ has some internal vertices in $Y$.

%
%
%

%
Hereafter in this section we present some basic properties 
used explicitly or implicitly throughout this paper. 
These are easy to see and  
the succeeding two propositions are well-known and might be folklores. 
A graph is called {\em  factor-critical} if 
each deletion of an arbitrary vertex results in an empty graph or a factorizable graph. 
\begin{proposition}[folklore]\label{prop:path2root}
Let $M$ be a near-perfect matching of a graph $G$ that exposes $v\in \Vg$. 
Then, $G$ is factor-critical if and only if for any $u\in \Vg$ there exists 
an $M$-\zero path from $u$ to $v$.
\end{proposition}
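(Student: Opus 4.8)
The plan is to establish both implications with the standard device of superposing two matchings and reading off an $M$-alternating path from the symmetric difference. Throughout I would keep in mind that $M$ is near-perfect and exposes exactly the one vertex $v$, so every vertex of $\Vg \setminus \{v\}$ is matched by $M$, and that an $M$-\zero path from $u$ to $v$ is precisely an even-length $M$-alternating path that starts at $u$ with an edge of $M$ and finishes at $v$ with an edge outside $M$. The one-vertex graph and the case $u = v$ (the trivial path) I would dispose of at the outset.

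For sufficiency ($\Leftarrow$) I would fix an arbitrary $w \in \Vg$ and aim to produce a perfect matching of $G - w$. Taking an $M$-\zero path $P$ from $w$ to $v$, I would first observe that because $M \cap E(P)$ exposes only $v$, every vertex of $V(P) \setminus \{v\}$ is matched by $M$ inside $P$; hence $M$ carries no edge across the boundary of $V(P)$, and $M \triangle E(P)$ is again a matching. On $V(P)$ this swap installs $E(P) \setminus M$, a near-perfect matching of $P$ exposing only the far endpoint $w$, and it disturbs nothing outside. Thus $M \triangle E(P)$ covers all of $\Vg$ except $w$ and restricts to a perfect matching of $G - w$, so $G$ is factor-critical.

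For necessity ($\Rightarrow$) I would use factor-criticality to pick a perfect matching $N$ of $G - u$, regarded as a near-perfect matching of $G$ exposing only $u$. In $M \triangle N$ every vertex has degree at most two, and the only degree-one vertices are $u$ (matched by $M$, missed by $N$) and $v$ (missed by $M$, matched by $N$); consequently the component through $u$ is a path $P$ from $u$ to $v$. Its edge at $u$ lies in $M$ and its edge at $v$ lies in $N$, so $P$ alternates $M, N, \dots, N$ with an even number of edges, whence $M \cap E(P)$ is a near-perfect matching of $P$ exposing only $v$ and $P$ is the required $M$-\zero path.

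The alternation-and-parity bookkeeping is routine; the single point that I expect to demand genuine care is the claim in the sufficiency argument that $M \triangle E(P)$ is a matching, i.e.\ that $M$ emits no edge leaving $V(P)$. This is exactly where the hypothesis that $M$ exposes \emph{only} $v$ does the work: it pins every non-terminal vertex of $P$ to an $M$-partner inside $P$, so flipping along $P$ cannot conflict with $M$-edges escaping $V(P)$. Once that is secured, both directions close immediately.
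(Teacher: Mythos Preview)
Your argument is correct in both directions: the symmetric-difference trick is exactly the standard route, and you have handled the one delicate point (that no $M$-edge leaves $V(P)$, so $M\triangle E(P)$ remains a matching) with the right justification. The paper itself does not supply a proof of this proposition---it is stated as folklore and left unproved---so there is nothing to compare against; your write-up is a clean self-contained proof of the folklore fact.
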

Given a graph $G$ and $X\subseteq V(G)$,
we denote the vertices  contained in 
the odd components  of $G-X$ as $D_X$, and  
$V(G)\setminus X \setminus D_X$ as $C_X$. 
The next proposition can be easily observed by the Berge formula. 
\begin{proposition}[folklore]\label{prop:barrier}
Let $G$ be a factorizable graph, and $X\subseteq V(G)$ be a barrier of $G$.
Then for any perfect matching $M$ of $G$,
\renewcommand{\labelenumi}{\theenumi}
\renewcommand{\labelenumi}{{\rm (\theenumi)}}
\renewcommand{\theenumi}{\roman{enumi}}
\begin{enumerate}
\item each vertex of $X$ is  matched to a vertex of $D_X$,
\item for each component $K$ of $G[D_X]$, 
$M_K$ is a near-perfect matching of $K$, accordingly 
$|\delta(K)\cap M |= 1$,  
\item $M$ contains a perfect matching of $G[C_X]$, and 
\item no edge in $E[X, C_X]$ nor $E(G[X])$ is allowed.
\end{enumerate}
\end{proposition}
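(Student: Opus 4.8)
The plan is to run a single parity-and-counting argument that pins down exactly how $M$ meets $X$, and then to read off all four assertions as consequences. First I would record the numerical content of the barrier condition: since $G$ is factorizable, $\nu(G) = |V(G)|/2$, so the right-hand side of the Berge formula has maximum value $0$; as $X$ attains this maximum, $q_G(X) = |X|$. This single equality is what drives everything.

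Next I would analyze, for one fixed odd component $K$ of $G-X$, how $M$ interacts with $K$. Because $K$ is a connected component of $G-X$, any edge of $M$ incident with $K$ that is not contained in $K$ must have its other end in $X$. Since $M$ is a perfect matching, every vertex of $K$ is covered; the vertices matched inside $K$ occur in pairs, so the number of vertices of $K$ matched outside $K$ has the same parity as $|V(K)|$, namely odd, and is therefore at least one. Writing $a_K := |\delta(K) \cap M|$ for each odd component $K$, this gives $a_K \geq 1$.

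The crux is then a double count of the edges of $M$ between $X$ and $D_X$. Summing the previous bound over the $q_G(X) = |X|$ odd components yields $\sum_K a_K \geq |X|$. On the other hand, each edge counted by some $a_K$ consumes a distinct vertex of $X$, since $M$ is a matching, so $\sum_K a_K \leq |X|$. Hence $\sum_K a_K = |X|$ and $a_K = 1$ for every odd component $K$. This is the only nonroutine step, and I expect the parity observation feeding into it to be the main point; the rest is bookkeeping. From $a_K = 1$ I obtain $|\delta(K) \cap M| = 1$ and, since then exactly one vertex of $K$ is matched outside $K$, that $M_K$ covers all but one vertex of $K$, i.e.\ is a near-perfect matching, which is (ii). Moreover the $|X|$ edges counted exhaust all of $X$, so every vertex of $X$ is matched into $D_X$, which is (i).

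Finally I would deduce (iii) and (iv) from (i). Since no vertex of $X$ is matched into $C_X$, and $C_X$ is joined to the rest of $G-X$ by no edge (distinct components of $G-X$ are nonadjacent), every vertex lying in an even component of $G-X$ must be matched within its own component; hence $M \cap E(G[C_X])$ is a perfect matching of $G[C_X]$, giving (iii). For (iv), observe that (i)--(iii) were derived for an \emph{arbitrary} perfect matching $M$: no perfect matching contains an edge of $E[X, C_X]$, because no vertex of $X$ is ever matched into $C_X$, and no perfect matching contains an edge of $E(G[X])$, because every vertex of $X$ is matched into $D_X$. Therefore no edge in $E[X, C_X]$ or in $E(G[X])$ is allowed, which is (iv).
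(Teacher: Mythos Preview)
Your argument is correct and is exactly the standard parity-plus-counting proof that the paper alludes to when it says the proposition ``can be easily observed by the Berge formula''; the paper does not spell out a proof of its own, so there is nothing further to compare.
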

Now let $G$ be a factorizable graph. 
We say $X\subseteq V(G)$ is \textit{separating} 
if any $H\in\mathcal{G}(G)$ satisfies 
$V(H)\subseteq X$ or $V(H)\cap X = \emptyset$. 
The next one is easy to see by the definitions. 
\begin{proposition}\label{prop:separating}
Let $G$ be a factorizable graph, and let $X\subseteq V(G)$. 
Then, the following four properties are equivalent:  
\renewcommand{\labelenumi}{\theenumi}
\renewcommand{\labelenumi}{{\rm \theenumi}}
\renewcommand{\theenumi}{(\roman{enumi})}
\begin{enumerate}
\item The set $X$ is separating.  
\item The set $X$ is an empty set, or there exist $H_1, \ldots, H_k \in \mathcal{G}(G)$ 
such that $X = V(H_1)\dot{\cup}\cdots\dot{\cup}V(H_k)$. 
\item For any perfect matching $M$ of $G$, $\delta(X)\cap M = \emptyset$. 
\item For any perfect matching $M$ of $G$, 
$M_X$ forms a perfect matching of $G[X]$.
\end{enumerate}
\end{proposition}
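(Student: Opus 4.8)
The plan is to prove $(\mathrm{i})\Leftrightarrow(\mathrm{ii})$ and then close the loop $(\mathrm{ii})\Rightarrow(\mathrm{iv})\Rightarrow(\mathrm{iii})\Rightarrow(\mathrm{i})$, after first recording two facts that drive everything. Since $G$ is factorizable, every vertex lies on some allowed edge, so every vertex belongs to exactly one factor-component; hence $\mathcal{G}(G)$ partitions $\Vg$. Moreover, by the very definition of the factor-components as the connected components of the subgraph of $G$ determined by $\what{M}$, any allowed edge joins two vertices of a common factor-component, and consequently no edge of $E_G[V(H_1),V(H_2)]$ with distinct $H_1,H_2\in\mathcal{G}(G)$ is allowed. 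With these in hand, $(\mathrm{i})\Leftrightarrow(\mathrm{ii})$ is immediate: if $X$ is separating then, using the partition property, $X$ is the union of the vertex sets of exactly those factor-components meeting it (the empty case being $k=0$); and conversely a union of vertex sets of factor-components meets every factor-component in either all or none of its vertices.

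For $(\mathrm{ii})\Rightarrow(\mathrm{iv})$, I would take an arbitrary perfect matching $M$ and an edge $e\in M$; as $e$ is allowed it lies inside a single factor-component, which by (ii) is either contained in $X$ or disjoint from it. Hence each edge of $M$ either lies in $G[X]$ or avoids $X$ entirely, so every vertex of $X$ is matched by $M$ within $X$ and $M_X$ is a perfect matching of $G[X]$. The step $(\mathrm{iv})\Rightarrow(\mathrm{iii})$ is trivial: if $M_X$ is a perfect matching of $G[X]$ then no vertex of $X$ is matched outside $X$, so $\delta(X)\cap M=\emptyset$.

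The only substantive step is $(\mathrm{iii})\Rightarrow(\mathrm{i})$, which I would argue by contraposition. If $X$ is not separating, pick $H\in\mathcal{G}(G)$ with $\emptyset\neq Y:=V(H)\cap X\subsetneq V(H)$. Because $H$ is a factor-component, the allowed edges lying in $E(G[V(H)])$ span a connected subgraph on $V(H)$; since $Y$ is a nonempty proper subset of $V(H)$, connectivity forces an allowed edge $e=uv$ with $u\in Y$ and $v\in V(H)\setminus Y$. Then $u\in X$ while $v\notin X$, and extending $e$ to a perfect matching $M$ (possible since $e$ is allowed) yields $e\in\delta(X)\cap M$, contradicting (iii). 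I expect this implication to be the crux, as it is the one place where the full strength of the definition of factor-components — connectivity of the allowed-edge subgraph, not merely that $\mathcal{G}(G)$ partitions $\Vg$ — is required; the remaining implications are bookkeeping built on the fact that every matched edge is allowed.
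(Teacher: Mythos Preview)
Your proof is correct. The paper does not give a detailed argument for this proposition, merely remarking that it is ``easy to see by the definitions''; your write-up is precisely the routine unpacking of those definitions (the partition of $V(G)$ by factor-components, the fact that every edge of a perfect matching is allowed, and the connectivity of the allowed-edge subgraph on each factor-component), so it matches the paper's intended approach.
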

\begin{proposition}\label{prop:intersection}
Let $G$ be a factorizable graph, and $M$ be a perfect matching of $G$. 
Let $X\subseteq V(G)$ be such that 
$M_X$ forms a perfect matching of $G[X]$,
and $P$ be an $M$-alternating path. 
\renewcommand{\labelenumi}{\theenumi}
\renewcommand{\labelenumi}{{\rm \theenumi}}
\renewcommand{\theenumi}{(\roman{enumi})}
\begin{enumerate} 
\item 
If $X\cap V(P)$ has no vertex exposed by $M_P$, 
then each connected component of $P[X]$ is 
an $M$-saturated path. 
\item If both end vertices of $P$ are in $X$, 
then each connected component of $P-E(G[X])$ is 
an $M$-ear relative to $X$.
\end{enumerate} 
\end{proposition}

\begin{proposition}\label{prop:intersection_path}
Let $G$ be a factorizable graph, and $M$ be a perfect matching of $G$. 
Let $P$ and $Q$ be $M$-alternating paths. 
If $P$ and $Q$ intersects only with their internal vertices, 
then each connected component of $P\cap Q$ is an $M$-saturated path. 
\end{proposition}

\begin{proposition}\label{prop:two-element-extreme}
Let $G$ be a factorizable graph and $M$ be a perfect matching of $G$, 
and let $u, v\in V(G)$. 
Then, the following two properties are equivalent:  
\renewcommand{\labelenumi}{\theenumi}
\renewcommand{\labelenumi}{{\rm \theenumi}}
\renewcommand{\theenumi}{(\roman{enumi})}
\begin{enumerate} 
\item The graph $G - u - v$ is  factorizable. 
\item There is an $M$-saturated path of $G$ between $u$ and $v$. 
\end{enumerate}
\end{proposition}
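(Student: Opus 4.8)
The plan is to prove both implications by the standard device of taking the symmetric difference of two matchings and analyzing the resulting components, which are always vertex-disjoint paths and even circuits alternating between the two matchings.

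For $(ii)\Rightarrow(i)$, suppose $P$ is an $M$-saturated path from $u$ to $v$. Since $P$ is $M$-alternating with an odd number of edges whose $M$-edges form a perfect matching of $P$, its two terminal edges both lie in $M$; hence in $M' := M\triangle E(P)$ the vertices $u$ and $v$ lose their $M$-partners while every internal vertex of $P$ merely exchanges its partner. I would verify directly that $M'$ is a matching of $G$ covering $V(G)\setminus\{u,v\}$ and exposing exactly $u$ and $v$. Because no edge of $M'$ is then incident with $u$ or $v$, $M'$ is a perfect matching of $G-u-v$, so $G-u-v$ is factorizable.

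For $(i)\Rightarrow(ii)$, let $N$ be a perfect matching of $G-u-v$, regarded as a matching of $G$ that exposes precisely $u$ and $v$. Consider $F := M\triangle N$. Every vertex has degree at most $2$ in $F$, so its components are paths and circuits alternating between $M$ and $N$. A vertex is an endpoint of a path component of $F$ exactly when it is covered by exactly one of $M$ and $N$; since $M$ is perfect and $N$ misses only $u$ and $v$, the sole such vertices are $u$ and $v$. Hence $F$ has a single path component $P$, whose endpoints are $u$ and $v$. At each endpoint the unique incident edge of $P$ is an $M$-edge (as $u$ and $v$ are $N$-exposed), so $P$ starts and ends with $M$-edges, has an odd number of edges, and $M\cap E(P)$ is a perfect matching of $P$; that is, $P$ is the desired $M$-saturated path from $u$ to $v$.

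I do not expect a genuine obstacle here: the content is entirely the parity bookkeeping of the two symmetric differences. The only point requiring care is confirming in $(i)\Rightarrow(ii)$ that $u$ and $v$ are the sole degree-one vertices of $F$, so that they are joined by a single path rather than lying on separate paths or on circuits; this follows immediately from $M$ being perfect and $N$ exposing exactly $\{u,v\}$.
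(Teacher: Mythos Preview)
Your argument is correct and is exactly the standard symmetric-difference proof one would expect; the paper itself states this proposition among its preliminary ``easy to see'' facts and gives no proof, so there is nothing to compare against beyond noting that your approach is the intended one.
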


\begin{proposition}\label{prop:circ2elem}
Let $G$ be a factorizable graph and $M$ be a perfect matching of $G$, 
and let $u, v\in V(G)$. 
If there is an $M$-alternating circuit $C$ with $u, v\in V(C)$, 
then $u$ and $v$ are contained in the same 
factor-component of $G$. 
\end{proposition}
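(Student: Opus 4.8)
The plan is to show that every edge of the circuit $C$ is allowed, so that all of $C$ lies inside the subgraph of $G$ determined by $\what{M}$; since $C$ is connected, its vertices --- in particular $u$ and $v$ --- then belong to a single connected component of that subgraph, that is, to one factor-component. Thus the whole statement reduces to the claim that each $e\in E(C)$ is allowed.

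First I would pin down the alternation structure of $C$. Because $M$ is a matching of $G$, the set $M\cap E(C)$ is automatically a matching of $C$; and by the definition of $M$-alternating, $E(C)\setminus M$ is a matching of $C$ as well. Since every vertex of a circuit has degree two, each vertex of $C$ is then incident with exactly one edge of $M\cap E(C)$ and exactly one edge of $E(C)\setminus M$. Hence $C$ has even length and both $M\cap E(C)$ and $E(C)\setminus M$ are perfect matchings of $C$. Next I would produce, for each $e\in E(C)$, a perfect matching of $G$ containing $e$. For $e\in M$ this is $M$ itself. For $e\in E(C)\setminus M$, take $M':=M\triangle E(C)$: off $C$ nothing changes, while on $C$ the perfect matching $M\cap E(C)$ of $C$ is replaced by the perfect matching $E(C)\setminus M$ of $C$, so $M'$ covers every vertex of $G$ and no two of its edges meet. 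Thus $M'$ is a perfect matching of $G$ containing $e$, and $e$ is allowed. Consequently $E(C)\subseteq\what{M}$, and the reduction described above finishes the proof.

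The argument is essentially immediate once the alternation is fixed, so there is no serious obstacle; the only point that deserves care is deriving from the stated definition of $M$-alternating that $C$ alternates strictly (both colour classes being perfect matchings of $C$), since this is exactly what guarantees that $M\triangle E(C)$ is a genuine perfect matching of $G$ rather than merely a matching.
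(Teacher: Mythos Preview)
Your argument is correct and is precisely the natural one: every edge of an $M$-alternating circuit is allowed (via $M$ or $M\triangle E(C)$), so the circuit lies in the subgraph determined by $\what{M}$ and its vertices share a factor-component. The paper states this proposition among the ``easy to see'' preliminaries and omits the proof entirely, so there is nothing to compare against; your write-up fills the gap cleanly, and your care in deriving the strict alternation of $C$ from the paper's definition of $M$-alternating is exactly the right point to make explicit.
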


\subsection{The Dulmage-Mendelsohn Decomposition} 
Factor-components of a bipartite factorizable graph 
are known to have the following partially ordered structure%
\footnote{This is different from the one in \cite{kita2012b, kita2012a}.
Though it is sometimes presented as a theorem for general bipartite graphs, 
we introduce it as one for bipartite factorizable graphs. }: 
%
\begin{theorem}%
[The Dulmage-Mendelsohn Decomposition~\cite{dm1958, dm1959, dm1963, lp1986, murota2000}]%
\label{thm:dm}
Let $G = (A, B; E)$ be a bipartite \matchablesp graph, 
and let $\mathcal{G}(G) =: \{G_i \}_{i\in I}$. 
Let $A_i:= A\cap V(G_i)$ and $B_i:= B\cap V(G_i)$ for each $i\in I$.
Then, there exists a partial order  $\pardm{A}$ on $\mathcal{G}(G)$ 
such that for any $i, j\in I$, 
\renewcommand{\labelenumi}{\theenumi}
\renewcommand{\labelenumi}{{\rm (\theenumi)}}
\renewcommand{\theenumi}{\roman{enumi}}
\begin{enumerate}
\item 
$E[B_j, A_i]\neq \emptyset$ 
 yields  $G_j \pardm{A} G_i$, and 
\item  
if $G_j \pardm{A} H \pardm{A} G_i$ yields $G_i = H$ or $G_j = H$ for any $H\in \mathcal{G}(G)$,
then $E[B_j, A_i]\neq \emptyset$.
\end{enumerate}
%
%
\end{theorem}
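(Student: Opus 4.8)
The plan is to fix a perfect matching $M$ of $G$ and orient the edges so that factor-components become the strong pieces of the resulting digraph. Concretely, form a digraph $D$ on $\Vg$ by directing each edge $ab$ with $a\in A$, $b\in B$ from $a$ to $b$ if $ab\in M$ and from $b$ to $a$ otherwise. At every $A$-vertex an entering arc is non-matching and the unique leaving arc is its matching edge, and dually at every $B$-vertex; hence every directed path of $D$ is an $M$-alternating path and every directed cycle is an $M$-alternating circuit. First I would record that matching edges never cross factor-components: since each $V(G_i)$ is a (one-element) union of factor-components, it is separating, so $\delta(V(G_i))\cap M=\emptyset$ by Proposition~\ref{prop:separating}. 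Consequently every edge of $E[B_j,A_i]$ with $i\neq j$ lies outside $M$ and is oriented from its end in $B_j$ to its end in $A_i$; this is what makes the arcs between two distinct components point in a single, well-defined direction.

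Next I would pass to the quotient digraph $\vec H$ on $\mathcal{G}(G)$ having an arc $G_j\to G_i$ exactly when $E[B_j,A_i]\neq\emptyset$, equivalently when $D$ has an arc from a vertex of $G_j$ to a vertex of $G_i$, and define $\pardm{A}$ as the reflexive--transitive closure of $\vec H$. The substance is to show $\vec H$ is acyclic, so that $\pardm{A}$ is antisymmetric and is a genuine partial order. The structural input I would invoke is that inside a single factor-component $G_i$ the digraph $D$ admits a directed path from every vertex of $A_i$ to every vertex of $B_i$; I expect this to be the main obstacle. Granting it, a hypothetical directed cycle $G_{i_1}\to\cdots\to G_{i_k}\to G_{i_1}$ of $\vec H$ with $k\geq 2$ lifts, by splicing each cross-arc $b\to a$ (with $a\in A_{i_{t+1}}$) to a simple intra-component $A_{i_{t+1}}$-to-$B_{i_{t+1}}$ path, into a simple directed cycle of $D$ that meets the two disjoint vertex sets $V(G_{i_1})$ and $V(G_{i_2})$. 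This directed cycle is an $M$-alternating circuit through vertices lying in distinct factor-components, contradicting Proposition~\ref{prop:circ2elem}. Hence $\vec H$ is acyclic and $\pardm{A}$ is a partial order.

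For the required intra-component reachability I would argue as follows. The vertices of $G_i$ are connected through allowed edges, since $G_i$ is induced by a connected component of $\what{M}$. For any allowed edge $e=ab$ of $G_i$ with $e\notin M$, choosing a perfect matching $M'\ni e$ and inspecting the $M$-alternating circuit of $M\triangle M'$ through $e$ shows that $a$ and $b$ are mutually reachable in $D$; combined with the matching arcs $a\to b$ and the connectivity of $G_i$ through allowed edges, this propagates to a directed path from any $a\in A_i$ to any $b\in B_i$ (the only degenerate case, $G_i\cong K_2$, already has the single matching arc $a\to b$). This is precisely the ingredient used in the lifting step above; it is a standard feature of elementary bipartite graphs, and isolating it cleanly from the definition of factor-component is where the care is needed.

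Finally, property (i) is immediate: if $E[B_j,A_i]\neq\emptyset$ then $\vec H$ carries the arc $G_j\to G_i$, so $G_j\pardm{A}G_i$. For property (ii), consider a covering pair, i.e.\ $G_j\pardm{A}G_i$ such that $G_j\pardm{A}H\pardm{A}G_i$ forces $H\in\{G_i,G_j\}$; the case $G_i=G_j$ is trivial since matching edges give $E[B_i,A_i]\neq\emptyset$, so assume $G_i\neq G_j$. Then a directed $\vec H$-path witnessing $G_j\pardm{A}G_i$ exists. If $E[B_j,A_i]=\emptyset$ there is no direct arc $G_j\to G_i$, so this path has length at least two and, as $\vec H$ is acyclic, it is simple and passes through some $H\notin\{G_i,G_j\}$ with $G_j\pardm{A}H\pardm{A}G_i$, contradicting the covering hypothesis. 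Therefore $E[B_j,A_i]\neq\emptyset$, which completes the plan.
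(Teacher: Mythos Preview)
The paper does not supply a proof of Theorem~\ref{thm:dm}; it is quoted as a classical result from the cited references, so there is no in-paper argument to compare against. Your outline is in fact the standard proof of the Dulmage--Mendelsohn decomposition as presented, for example, in Murota's book: orient $G$ by a fixed perfect matching $M$, identify the factor-components with the strongly connected pieces of the resulting digraph, and read off $\pardm{A}$ as the reachability order on the acyclic condensation. Properties (i) and (ii) then reduce, exactly as you say, to ``arc in $\vec H$ implies $\pardm{A}$'' and ``covering pair implies direct arc,'' both of which are formalities once acyclicity of $\vec H$ is established.

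The one spot that deserves a cleaner justification is the intra-component reachability ``from any $a\in A_i$ to any $b\in B_i$.'' Your propagation argument along a path of allowed edges is correct but incomplete as written: when the path crosses a \emph{matching} edge from the $B$-side to the $A$-side you need the reverse reachability, which your mutual-reachability clause only supplies for non-matching allowed edges. The fix is the observation you are implicitly using: when $|V(G_i)|>2$, every vertex has at least two allowed edges, hence every matching edge $aa'$ lies on an $M$-alternating circuit as well (take any non-matching allowed edge at $a$ and note that its circuit must leave $a$ through $a\to a'$). Consequently $D\!\upharpoonright\! G_i$ is strongly connected whenever $|V(G_i)|>2$, and the $K_2$ case you already handled. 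With that small addition, your lifting of a hypothetical $\vec H$-cycle to a simple $M$-alternating circuit meeting two distinct factor-components goes through, and Proposition~\ref{prop:circ2elem} gives the contradiction.
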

We call this decomposition of $G$ into a poset 
\textit{the Dulmage-Mendelsohn decomposition} (in short,  \textit{the DM-decomposition}), and 
each element of $\mathcal{G}(G)$, in this context, a \textit{DM-component}.
The DM-decomposition is uniquely determined by a graph, up to the choice of roles of 
color classes.
In this paper, we call the DM-decomposition of $G = (A, B; E)$
as in Theorem~\ref{thm:dm} the DM-decomposition \textit{with respect to }$A$.
\if0
Given a perfect matching of $G$, we can compute the DM-decomposition of $G$
in $O(n+m)$ time~\cite{dm1958, dm1959, dm1963, murota2000}, which is due to the following property.
\fi
\begin{proposition}[Dulmage \& Mendelsohn~\cite{dm1958, dm1959, dm1963, murota2000}]%
\label{prop:dm}
Let $G = (A, B; E)$ be a bipartite \matchablesp graph, and $M$ be a perfect matching of $G$.
Let $G_1, G_2 \in \mathcal{G}(G)$, 
and let $u\in A\cap V(G_1)$, $v\in A\cap V(G_2)$,
and $w\in B\cap V(G_2)$.
Then there is an $M$-balanced path from $u$ to $v$ 
if and only if $G_1\pardm{A} G_2$; 
additionally,  there is an $M$-saturated path between $u$ to $w$ 
if and only if $G_1\pardm{A} G_2$.
\end{proposition}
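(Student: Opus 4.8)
The plan is to translate the whole statement into a reachability question in an auxiliary digraph and then match that reachability with the order $\pardm{A}$ supplied by Theorem~\ref{thm:dm}. Orient $G$ by directing every edge of $M$ from its end in $A$ to its end in $B$, and every edge outside $M$ from $B$ to $A$; call the resulting digraph $D$. A directed path of $D$ that starts at a vertex of $A$ must begin with an $M$-edge and then alternate, so it has the shape $a_0\to b_0\to a_1\to b_1\to\cdots$. Reading such a path as an undirected path of $G$, one checks at once that a directed $u$--$t$ path with $u\in A$ is $M$-saturated exactly when $t\in B$ (odd length, both ends met by $M$), and is an $M$-balanced path from $u$ to $t$ exactly when $t\in A$ (even length, with $M$ exposing only $t$); conversely every $M$-saturated path between $A$ and $B$, and every $M$-balanced path between two vertices of $A$, arises in this way. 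Thus both halves of the proposition reduce to the single claim that $D$ has a directed $u$--$v$ path (resp. $u$--$w$ path) if and only if $G_1\pardm{A}G_2$.

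First I would record two structural facts about $D$. (a) Each factor-component $G_i$ is an elementary bipartite graph, and the restriction $D[V(G_i)]$ is strongly connected; this is the classical fact that a connected bipartite graph with a perfect matching is elementary if and only if $D$ is strongly connected, equivalently that every one of its edges lies on an $M$-alternating circuit. (b) Every edge joining two distinct factor-components is non-allowed, hence lies outside $M$; consequently such an edge in $E[B_j,A_i]$ is oriented from $B_j$ into $A_i$, and no directed cycle of $D$ can meet two different factor-components, since such a cycle would render its edges allowed. Facts (a) and (b) together say that the factor-components are precisely the strongly connected components of $D$, so its condensation is acyclic, and by (a) directed reachability from a vertex of $G_1$ to a vertex of $G_2$ depends only on the pair $(G_1,G_2)$.

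It then remains to identify this condensation reachability with $\pardm{A}$. In one direction, a directed path from $G_1$ to $G_2$ in the condensation is a chain $G_1=F_0,F_1,\dots,F_m=G_2$ in which each step arises from an inter-component edge, i.e.\ $E[B(F_{t-1}),A(F_t)]\neq\emptyset$; property (i) of Theorem~\ref{thm:dm} gives $F_{t-1}\pardm{A}F_t$ for every $t$, and transitivity yields $G_1\pardm{A}G_2$. In the other direction, assuming $G_1\pardm{A}G_2$ I would refine the relation to a maximal chain $G_1=H_0,H_1,\dots,H_k=G_2$ in which consecutive elements form a covering pair of $\pardm{A}$ (nothing lies strictly between them); property (ii) then furnishes an edge in $E[B(H_{t-1}),A(H_t)]$ at each step, and concatenating these inter-component edges with strongly connected detours inside each $H_t$, available by (a), produces the desired directed $u$-to-$v$ (resp.\ $u$-to-$w$) path. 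Combined with the correspondence of the first paragraph, this settles both equivalences.

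I expect the genuine obstacle to be fact (a): showing that each factor-component is strongly connected under the orientation. This is exactly where elementariness and the characterization of allowed edges in bipartite graphs must be invoked, and it also underlies the base case $G_1=G_2$, where reflexivity of $\pardm{A}$ forces the existence of the intra-component balanced and saturated paths. The remaining work, namely verifying the parity and endpoint bookkeeping of the first paragraph and lining up the direction of $\pardm{A}$ with the orientation of inter-component edges in (i) and (ii), is routine but must be carried out consistently with the convention ``with respect to $A$''.
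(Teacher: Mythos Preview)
The paper does not supply a proof of this proposition; it is quoted as a known result of Dulmage and Mendelsohn (with the cited references) and used as a black box throughout. So there is nothing to compare against.

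Your argument is correct and is in fact the standard derivation: orient $M$-edges $A\to B$ and non-$M$ edges $B\to A$, identify $M$-balanced (resp.\ $M$-saturated) paths starting in $A$ with directed paths ending in $A$ (resp.\ $B$), show that the factor-components are exactly the strong components of this digraph, and read the order $\pardm{A}$ off the condensation via properties (i) and (ii) of Theorem~\ref{thm:dm}. The parity bookkeeping in your first paragraph is right (in a bipartite graph an even-length path keeps both ends in the same colour class, so an $M$-balanced path from $u\in A$ automatically ends in $A$, and dually for saturated paths), and the direction of the inter-component arcs matches the convention ``with respect to $A$'' in Theorem~\ref{thm:dm}. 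The point you single out as the real obstacle, fact (a), is precisely the classical characterization of elementary bipartite graphs: every edge lies on an $M$-alternating circuit, equivalently the oriented graph restricted to a factor-component is strongly connected. Once that is granted, the remainder is routine, and you have carried it out consistently.
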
 
\if0
\begin{proposition}\label{prop:dm_ref}
Let $G=(A, B; E)$ be a bipartite factorizable graph, 
and $M$ be a perfect matcing of $G$. 
Then, $u, v\in A$ are contained in the same DM-component
if and only if 
there are two $M$-balanced paths respectively from 
$u$ to $v$ and $v$ and $v$ to $u$. 
\end{proposition}
\fi

\subsection{Our Aim%
\protect\footnote{For more details on the statements in this section, 
see Appendix.}}\label{sec:aim}
Given an elementary graph $G$, 
we say $u \sim v$ for $u, v\in V(G)$ 
if $u = v$ holds or $G-u-v$ is not factorizable. 
Kotzig~\cite{kotzig1959a, kotzig1959b, kotzig1960}
 found that $\sim$ is an equivalence relation. 
 Later Lov\'asz redefined it:  
 \begin{theorem}[Lov\'asz~\cite{lp1986}]%
\label{thm:canonicalpartition}
Let $G$ be an elementary graph. 
Then, the family of maximal barriers forms 
a partition of $V(G)$. 
Additionally, 
this partition coincides with the equivalence classes by $\sim$.  
\end{theorem}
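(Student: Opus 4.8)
The plan is to fix once and for all a perfect matching $M$ of $G$ (which exists since $G$ is elementary, hence factorizable) and to translate the relation $\sim$ into the language of alternating paths: by Proposition~\ref{prop:two-element-extreme}, for distinct $u,v\in V(G)$ we have $u\sim v$ if and only if there is no $M$-saturated path between $u$ and $v$. For $v\in V(G)$ write $B_v:=\{u\in V(G):u\sim v\}$. The goal is to show that every $B_v$ is a maximal barrier, that these are exactly the maximal barriers, and that they partition $V(G)$; the assertion that $\sim$ is an equivalence relation then falls out as a by-product.

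First I would record three facts. \emph{(i) Barriers are $\sim$-cliques.} If $X$ is a barrier and $u,v\in X$ are distinct, then $u\sim v$: otherwise $G-u-v$ is factorizable, and applying the Berge formula to it with the test set $X\setminus\{u,v\}$ gives $q_{G-u-v}(X\setminus\{u,v\})-|X\setminus\{u,v\}|=q_G(X)-(|X|-2)=2>0$, contradicting that a factorizable graph has maximum deficiency $0$ (here $G-u-v-(X\setminus\{u,v\})=G-X$, so the two odd-component counts agree). In particular any barrier containing $v$ is contained in $B_v$. \emph{(ii) Every vertex lies in a barrier.} The singleton $\{v\}$ is a barrier: deleting the $M$-edge at $v$ yields a near-perfect matching of $G-v$, so $G-v$ has deficiency $1$, whence by the Berge formula $q_G(\{v\})=1$. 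Thus maximal barriers cover $V(G)$. \emph{(iii) A converse to (i).} If $u\sim v$, then $G-u-v$ is non-factorizable of even order, so its deficiency is a positive even number; taking a maximizer $Y$ of the Berge formula for $G-u-v$ and setting $X^{*}:=Y\cup\{u,v\}$ gives $q_G(X^{*})=q_{G-u-v}(Y)\ge|Y|+2=|X^{*}|$, and since $G$ is factorizable the reverse inequality holds too, so $X^{*}$ is a barrier containing both $u$ and $v$. Combining (i) and (iii) gives the clean characterization $u\sim v\iff$ some barrier contains both $u$ and $v$.

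It remains to upgrade this to a partition, and here lies the main obstacle: I must show the barriers through a fixed vertex have a common envelope. Concretely, the key lemma I would establish is that \emph{if $X$ and $Y$ are barriers with $X\cap Y\neq\emptyset$, then $X\cup Y$ is a barrier} --- a submodular-type statement for $X\mapsto q_G(X)-|X|$ (it genuinely fails for disjoint barriers, as the two diagonals of $C_4$ show, so the nonempty intersection must be used). Granting this lemma, the union $\mathcal{B}_v$ of all (finitely many) barriers containing $v$ is again a barrier, since they pairwise meet in $v$; hence $\mathcal{B}_v$ is the unique maximal barrier containing $v$. By (i), $\mathcal{B}_v\subseteq B_v$, while $B_v\subseteq\mathcal{B}_v$ is exactly the characterization above; therefore $B_v=\mathcal{B}_v$ is a maximal barrier. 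Finally, $u\sim v$ forces $\mathcal{B}_u=\mathcal{B}_v$ (each is the unique maximal barrier through the common vertex), so the sets $B_v$ are pairwise equal or disjoint and cover $V(G)$; this is simultaneously the partition into maximal barriers and the partition into classes of $\sim$, and in particular exhibits $\sim$ as an equivalence relation.

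I expect the union lemma to be the hard part. I would attack it by using the structural description of barriers in Proposition~\ref{prop:barrier}: relative to $M$, each vertex of a barrier $Z$ is matched into a distinct odd component of $G-Z$ while $G[C_Z]$ is internally matched. Transferring this component-counting from $X$, $Y$, and $X\cap Y$ to $X\cup Y$ --- tracking how the odd components of $G-(X\cap Y)$ are cut by the extra deleted vertices, and using alternating-path surgery (Propositions~\ref{prop:intersection} and~\ref{prop:intersection_path}) to control the matching edges across the relevant sets --- should yield $q_G(X\cup Y)\ge|X\cup Y|$, which with the automatic reverse inequality proves the lemma. An alternative route to the same end is to prove transitivity of $\sim$ directly: given an $M$-saturated path witnessing $u\not\sim w$, one analyzes the position of $v$ and $M(v)$ relative to it and reroutes, via Propositions~\ref{prop:intersection} and~\ref{prop:intersection_path}, into an $M$-saturated path from $u$ to $v$ or from $v$ to $w$. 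The obstruction to this direct approach is precisely the case where both $v$ and its mate avoid the path, which is exactly where the elementarity of $G$ must be invoked.
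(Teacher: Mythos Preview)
The paper does not give a direct proof of Theorem~\ref{thm:canonicalpartition}; it is quoted as Lov\'asz's result. The paper does recover it as the elementary special case of Theorem~\ref{thm:barrier2up} (see the Remark following that theorem), but that route takes Theorem~\ref{thm:generalizedcanonicalpartition}---that $\gsim{G}$ is an equivalence relation---as a cited black box, and for elementary $G$ this is precisely Kotzig's theorem that $\sim$ is an equivalence relation. So the paper's implicit argument has the opposite dependency structure from yours: it \emph{assumes} the equivalence relation and reads off the barrier description via Proposition~\ref{prop:deletable2path}, whereas you aim to prove the union-of-overlapping-barriers lemma directly and obtain the equivalence relation as a corollary. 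Your reductions (facts (i)--(iii) and the derivation of the theorem from the union lemma) are correct and cleanly argued.

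There is, however, a genuine gap in your primary attack on the union lemma: you never isolate where elementarity is used, and it must be used essentially. On the path $P_4$ with vertices $a,b,c,d$ in order (factorizable, not elementary), the sets $\{b,c\}$ and $\{b,d\}$ are both barriers meeting in $b$, yet $\{b,c,d\}$ is not a barrier since $q_G(\{b,c,d\})=1$. The ingredient your component-counting sketch is missing is Proposition~\ref{prop:elembarrier}: in an elementary graph every barrier $Z$ has $C_Z=\emptyset$, so $V(G)\setminus Z=D_Z$ and the $M$-bijection between $Z$ and the odd components of $G-Z$ accounts for \emph{all} of $V(G)$. Only with this in hand can the bookkeeping you describe (via Proposition~\ref{prop:barrier}) be made to balance; without it the same outline would apply verbatim to $P_4$. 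Your alternative route---proving transitivity of $\sim$ by rerouting an $M$-saturated $u$--$w$ path through $v$---is closer to how Kotzig's theorem is classically proved, and you correctly flag that the case $v,v'\notin V(P)$ is where elementarity must enter; but the actual rerouting argument in that case is still substantial and is not supplied here.
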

This partition by the maximal barriers is called 
the {\em canonical partition}. 
As we mention in Section~\ref{sec:intro}, 
it plays fundamental and significant roles in matching theory. 
On the other hand, 
as for non-elementary graphs, 
the family of maximal barriers 
never forms a partition of the vertices (see \cite{lp1986}). 
The question remains: %
how all the maximal barriers exist 
and what is the counterpart in general graphs?
Therefore,  
we are going to investigate it. 
Actually, we work on a wider notion: {\em odd-maximal barriers}. %
\footnote{%
This is identical to those Kir\'{a}ly calls 
 strong barriers~\cite{kiraly1998}, 
 however we call it in the different way 
so as to avoid the confusion with the notion of strong end by Frank~\cite{frank1993}. }
\begin{definition}
Let $G$ be a graph. 
A barrier $X\subseteq V(G)$  is called an odd-maximal barrier if 
it is a barrier which is maximal with respect to $X\cup D_X$,  
i.e.,  
no $Y\subseteq D_X$ with $Y\neq \emptyset$
 satisfies that $X\cup Y$ is a barrier of $G$. 
\end{definition}
Odd-maximal barriers have some nice properties (see \cite{kiraly1998, kita2012f}):
First, A maximal barrier is an odd-maximal barrier. 
Second, for elementary graphs, 
the notion of maximal barriers and the notion of odd-maximal barriers coincide.
%
%
Hence, it seems reasonable to 
work on  the odd-maximal barriers.  

Given a graph $G$, 
we define $D(G)$ as the set of vertices 
that can be respectively exposed by maximum matchings,
$A(G)$ as $\Gamma(D(G))$
and $C(G)$ as 
$V(G)\setminus (D(G)\cup A(G))$. 
There is a well-known  theorem stating that 
$A(G)$ forms a barrier with special properties, 
called the {\em Gallai-Edmonds structure theorem}~\cite{lp1986}. 
Actually, with the Gallai-Edmonds structure theorem and the theorem by Kir\'aly~\cite{kiraly1998}, 
we can see that it suffices to work on factorizable graphs: 
\begin{proposition}[see also Kir\'aly~\cite{kiraly1998}]%
\label{prop:reduction2factorizable_nofoot}
Let $G$ be a graph. 
A set of vertices $S\subseteq V(G)$ is an odd-maximal barrier of $G$
if and only if it is a disjoint union of $A(G)$ and 
an odd-maximal barrier of the factorizable subgraph $G[C(G)]$. 
\end{proposition}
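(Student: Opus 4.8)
The plan is to prove the two directions separately, using the Gallai-Edmonds structure theorem as the main tool. Recall that the theorem tells us that $A(G)$ is a barrier with the following properties: deleting $A(G)$ leaves exactly the components of $G[D(G)]$ (all of which are factor-critical and odd) together with the components of $G[C(G)]$ (all of which are factorizable and even); moreover $C(G)$ is a separating set in the sense that $G[C(G)]$ is factorizable. A key numerical fact I would extract first is that $D_{A(G)} = D(G)$ and $C_{A(G)} = C(G)$, and that $|A(G)|$ equals precisely the deficiency contribution, so that the Berge formula holds with equality for $A(G)$ and $q_G(A(G)) - |A(G)| = |D(G)| - (\text{number of components}) + \cdots$ — I would state the Gallai-Edmonds count $q_G(A(G)) = |A(G)| + (|V(G)| - 2\nu(G))$ explicitly, since the proof hinges on reconciling these deficiency numbers.

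For the ``if'' direction, suppose $S = A(G) \dot{\cup} B$ where $B$ is an odd-maximal barrier of $G[C(G)]$. First I would verify $S$ is a barrier of $G$ by computing $q_G(S) - |S|$. Deleting $S$ from $G$ removes $A(G)$ and then $B$ from the $C(G)$-part; the odd components of $G - S$ are exactly the odd components of $G[D(G)]$ (unchanged, since $A(G)$ separates $D(G)$ from $C(G)$) together with the odd components of $G[C(G)] - B$. Since $G[C(G)]$ is factorizable, its deficiency is zero, so $B$ being a barrier there means $q_{G[C(G)]}(B) = |B|$. Combining these with the Gallai-Edmonds count should give $q_G(S) - |S| = |V(G)| - 2\nu(G)$, establishing that $S$ is a barrier. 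Then I would argue odd-maximality: any $Y \subseteq D_S$ with $S \cup Y$ a barrier would have to respect the component structure; the part of $Y$ inside $D(G)$ cannot enlarge the barrier because the components of $G[D(G)]$ are factor-critical (adding a vertex of a factor-critical odd component to the barrier splits it but does not increase $q - |X|$, by a standard factor-critical computation), and the part of $Y$ inside $C(G)$'s odd components would contradict odd-maximality of $B$ in $G[C(G)]$.

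For the ``only if'' direction, suppose $S$ is an odd-maximal barrier of $G$. The crucial claim is that $A(G) \subseteq S$ and $C(G) \cap S = B$ is an odd-maximal barrier of $G[C(G)]$, with $S \cap D(G)$ forced to be empty. Here I would lean on the theorem of Kir\'aly cited in the statement, which presumably already establishes that every odd-maximal barrier contains $A(G)$ and meets $C(G)$ in a barrier of $G[C(G)]$; my job is then to check the odd-maximality transfers correctly in both directions and that $S$ contains no vertex of $D(G)$. The latter follows because any vertex $v \in D(G)$ is exposed by some maximum matching, and a barrier vertex must be matched to an odd component vertex under every maximum matching (the analogue of Proposition~\ref{prop:barrier}(i) in the deficient setting), which $v \in D(G)$ cannot satisfy.

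The hard part will be handling odd-maximality rigorously rather than mere barrier-hood, since that is where the interaction between the factor-critical components of $D(G)$ and the barrier $B$ inside $C(G)$ must be controlled. Establishing barrier-hood of $S$ is essentially a deficiency bookkeeping exercise via the Berge formula, but showing that no $Y \subseteq D_S$ can extend $S$ requires a careful case split on whether vertices of $Y$ lie in $D(G)$-components or in $C(G)$-components, and in the former case invoking factor-criticality to rule out any gain. I expect that most of the routine work can be delegated to Kir\'aly's theorem and the Gallai-Edmonds structure, so the novel content reduces to verifying these maximality transfers; I would therefore prioritize stating the precise correspondence between $Y$ and its restrictions $Y \cap D(G)$, $Y \cap C(G)$ before doing any counting.
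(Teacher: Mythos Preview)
Your plan is sound and aligns with the paper's argument, which likewise rests on the Gallai--Edmonds structure theorem together with Kir\'aly's result that $A(G)$ lies in every odd-maximal barrier. The chief difference is packaging: the paper absorbs all the Berge-formula bookkeeping into a single folklore ``increment'' lemma (for a barrier $X$ and any $Y \supseteq X$, the set $Y$ is a barrier of $G$ if and only if $Y \setminus X$ is a union of barriers of connected components of $G-X$), and then reads off both directions from this plus the fact that the only barrier of a factor-critical graph is $\emptyset$. Your direct deficiency counts reach the same conclusions but less economically; in particular, your route to $S \cap D(G) = \emptyset$ via ``every barrier vertex is matched under every maximum matching'' is valid but asks you to prove the deficient-case analogue of Proposition~\ref{prop:barrier}(i), whereas the increment lemma gives it for free once $A(G) \subseteq S$ is known, since $S \setminus A(G)$ must then restrict to barriers of the factor-critical components in $D(G)$, forcing emptiness there. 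Either route works; the increment lemma simply makes the odd-maximality transfers you flag as ``the hard part'' essentially automatic in both directions.
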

Given the above facts, in this paper 
we give canonical structures of odd-maximal barriers 
in general factorizable graphs  
that  can be regarded as a generalization of Lov\'asz's canonical partition, 
aiming to contribute to the foundation of matching theory. 
 
\subsection{The Generalized Cathedral Structure}\label{sec:canonical}
In this section we are going to introduce 
the canonical structure theorems of factorizable graphs, 
which shall serve as a language to describe odd-maximal barriers. 
They are composed of three parts: 
a partially ordered structure on the  factor-components 
(Theorem~\ref{thm:order}), 
a generalization of the canonical partition 
(Theorem~\ref{thm:generalizedcanonicalpartition}), 
and a relationship between these two (Theorem~\ref{thm:cor}).%
\footnote{All the statements in \cite{kita2012b} can be also found in 
\cite{kita2012a}}  
\begin{definition}
Let $G$ be a factorizable graph, 
and let $G_1, G_2 \in \mathcal{G}(G)$. 
We say $X\subseteq V(G)$ is a critical-inducing set for $G_1$ to $G_2$ 
if $X$ is separating, 
$V(G_1)\cup V(G_2)\subseteq X$ holds, 
and $G[X]/G_1$ is factor-critical. 
Additionally, 
we say $G_1 \yield G_2$ if there is a critical-inducing set 
for $G_1$ to $G_2$. 
\end{definition}
\begin{theorem}[Kita~\cite{kita2012a, kita2012b}]\label{thm:order}
For any factorizable graph $G$, 
$\yield$ is a partial order on $\mathcal{G}(G)$. 
\end{theorem}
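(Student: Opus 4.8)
The plan is to verify the three defining properties of a partial order for the relation $\yield$ on $\mathcal{G}(G)$: reflexivity, antisymmetry, and transitivity. Reflexivity should be immediate: for any $G_1 \in \mathcal{G}(G)$, the set $X = V(G_1)$ is separating (being a single factor-component, it satisfies Proposition~\ref{prop:separating}(ii)), it contains $V(G_1)\cup V(G_1)$, and $G[X]/G_1$ is a single contracted vertex, which is trivially factor-critical; hence $G_1 \yield G_1$.

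For antisymmetry, I would assume $G_1 \yield G_2$ and $G_2 \yield G_1$ with witnessing critical-inducing sets $X$ and $Y$, and aim to show $G_1 = G_2$. The key idea is that the factor-critical condition forces tight size relationships. If $G[X]/G_1$ is factor-critical, then after contracting $G_1$ to a single vertex the remaining structure must admit a near-perfect matching, which constrains how $G_2$ sits relative to $G_1$ inside $X$; combined with the symmetric condition from $Y$, these constraints should be incompatible unless the two components coincide. I expect to invoke Proposition~\ref{prop:separating} to translate separatingness into statements about perfect matchings restricted to $X$ and $Y$, and to use a parity or matching-count argument to derive the contradiction when $G_1 \neq G_2$.

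For transitivity, suppose $G_1 \yield G_2$ via a critical-inducing set $X$ and $G_2 \yield G_3$ via $Y$; I would construct a critical-inducing set for $G_1$ to $G_3$, the natural candidate being $X \cup Y$ (or possibly $X \cup Y$ together with any factor-components needed to keep it separating). One checks that $X\cup Y$ is separating since a union of separating sets is separating (by Proposition~\ref{prop:separating}(ii), both are unions of factor-components, so their union is too), and that it contains $V(G_1)\cup V(G_3)$. The substantive step is showing $G[X\cup Y]/G_1$ is factor-critical: here I would use that factor-criticality is preserved under the kind of gluing where one factor-critical piece is attached along the contracted vertex of another. Concretely, contracting $G_1$ in $G[X\cup Y]$, the part coming from $G[X]/G_1$ is already factor-critical, and $G[Y]/G_2$ contributes a factor-critical piece hanging off the image of $G_2$; a standard ear-type or direct matching argument (using Proposition~\ref{prop:path2root} to characterize factor-criticality via balanced paths to a common root) should assemble these into factor-criticality of the whole.

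The main obstacle I anticipate is antisymmetry. Reflexivity is trivial and transitivity follows a predictable gluing pattern, but antisymmetry requires genuinely exploiting the asymmetry hidden in the factor-critical condition $G[X]/G_1$ (note the contraction is by $G_1$, not $G_2$). The difficulty is that the definition of $\yield$ is not visibly symmetric in $G_1$ and $G_2$, so I must extract from $G_1 \yield G_2$ and $G_2 \yield G_1$ a rigidity forcing $V(G_1)=V(G_2)$; this likely needs a careful analysis of how the minimal critical-inducing sets interact, and is the step where I would expect to need the finest structural lemmas about factorizable graphs rather than just the folklore propositions quoted above.
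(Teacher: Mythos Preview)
The paper does not contain a proof of this theorem at all: it is quoted as a result of the author's earlier work~\cite{kita2012a, kita2012b} and is used here purely as background for the generalized cathedral structure. Consequently there is no proof in this paper to compare your proposal against.

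As for the substance of your sketch: your reflexivity argument is correct, and your transitivity plan (take $X\cup Y$, check it is separating via Proposition~\ref{prop:separating}, then glue the two factor-critical contractions using Proposition~\ref{prop:path2root}) is the natural approach and can be made to work, though the gluing step needs care because $G_2$ may sit inside $X$ in a nontrivial way and the contracted graph $G[X\cup Y]/G_1$ is not literally a one-vertex join of $G[X]/G_1$ and $G[Y]/G_2$. You are right that antisymmetry is the real issue, and your proposal does not actually contain an argument for it---you correctly identify that the asymmetry lies in \emph{which} component is contracted, but ``parity or matching-count argument'' is not yet a proof. In the original papers the antisymmetry is handled via the alternating-path characterization of $\yield$ (closely related to Proposition~\ref{prop:ear} here), which gives a directional invariant that cannot be reversed unless the two factor-components coincide; a purely size-based argument from the definition alone is unlikely to suffice.
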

\begin{definition}
Let $G$ be a factorizable graph. 
For $u, v\in V(G)$ 
we say $u\gsim{G} v$ if 
$u$ and $v$ are contained in the same factor-component of $G$, 
and $G-u-v$ is NOT factorizable. 
\end{definition}
\begin{theorem}[Kita~\cite{kita2012a, kita2012b}]\label{thm:generalizedcanonicalpartition}
For any factorizable graph $G$, 
$\gsim{G}$ is an equivalence relation on $V(G)$. 
\end{theorem}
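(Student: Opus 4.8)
The plan is to fix a perfect matching $M$ of $G$ and to translate ``$G-u-v$ is not factorizable'' into the language of alternating paths via Proposition~\ref{prop:two-element-extreme}: for $u\neq v$, the relation $u\gsim{G}v$ holds precisely when $u,v$ lie in a common factor-component and there is \emph{no} $M$-saturated path between $u$ and $v$. Reflexivity is then immediate, since $G-u$ has odd order and hence is never factorizable (equivalently, no $M$-saturated path joins $u$ to itself), and symmetry is trivial because $G-u-v=G-v-u$ and ``same factor-component'' is a symmetric condition. Since $\gsim{G}$ only ever relates vertices of one factor-component, for transitivity it suffices to assume $u\gsim{G}v$ and $v\gsim{G}w$ — so that $u,v,w$ share a factor-component $H$ — and to derive a contradiction from the existence of an $M$-saturated path $P$ between $u$ and $w$, by producing an $M$-saturated path between $u$ and $v$ or between $v$ and $w$.

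First I would dispose of the case $v\in V(P)$. Because $P$ is $M$-saturated, $M\cap E(P)$ covers every vertex of $P$, so the matching edge $vv'$ at $v$ lies on $P$; cutting $P$ at $v$ and keeping the half of $P$ that contains $vv'$ yields a subpath that begins at $v$ with a matching edge and ends with a matching edge at $u$ or at $w$, hence is $M$-saturated, contradicting one of the two hypotheses. The substantive case is $v\notin V(P)$; note that then $v'\notin V(P)$ as well, since otherwise $v'$ would be matched on $P$ by an edge of $P$ rather than by $vv'$.

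For this case the plan is a splicing argument. I would take an $M$-alternating path $R$ that starts at $v$ with the matching edge $vv'$ and follow it until it first meets $V(P)$, at a vertex $z$; by this choice the interior of $R$ avoids $P$, and every vertex of $R$ except possibly $z$ is matched within $R$. Depending on whether $R$ reaches $z$ along a matching or a non-matching edge, I continue from $z$ along $P$ through the oppositely-typed $P$-edge at $z$ and follow $P$ to its end, obtaining in either case a \emph{simple} $M$-alternating path from $v$ to $u$ or from $v$ to $w$ in which both endpoints are matched internally, i.e.\ an $M$-saturated path — the desired contradiction. The parity bookkeeping at the splice is routine, and Propositions~\ref{prop:intersection} and~\ref{prop:intersection_path} govern precisely how $R$ and $P$ may meet.

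The main obstacle is the very existence of $R$: I must know that, starting along $vv'$, the vertex $u$ is $M$-alternatingly reachable from $v$, so that $R$ necessarily meets $V(P)$ (at latest at $u$). Because $u$ and $v$ share the factor-component $H$, this is a reachability statement internal to the elementary graph $H$, and it is exactly here that the common-factor-component hypothesis is used in an essential way. I would isolate it as a lemma — in an elementary graph, for any two vertices $a,b$ there is an $M$-alternating path from $a$ to $b$ whose first edge is the matching edge at $a$ — and prove it from the matching-covered (ear-decomposable) structure of elementary graphs; I expect this lemma, rather than the splice, to be the real difficulty. Finally, Proposition~\ref{prop:circ2elem} together with the partial order of Theorem~\ref{thm:order} clarifies why no componentwise shortcut is available: an $M$-saturated path between two vertices of $H$ may legitimately leave $H$ and thread other factor-components, so $G-u-v$ can be factorizable even when $H-u-v$ is not, and hence one genuinely cannot reduce $\gsim{G}$ to the canonical partition of each factor-component. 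Once $R$ is in hand the contradiction is forced, so $\gsim{G}$ is transitive and therefore an equivalence relation.
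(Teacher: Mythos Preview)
The paper does not prove Theorem~\ref{thm:generalizedcanonicalpartition}; it is quoted from~\cite{kita2012a,kita2012b}, so there is no in-paper argument to compare against. Evaluated on its own, your splicing argument for transitivity is correct. The decisive observation (which you use but do not quite isolate) is that because $P$ is $M$-saturated, \emph{every} vertex of $P$ has its $M$-partner on $P$; hence at the first hitting vertex $z$ the incoming edge of $vRz$ must be non-matching (otherwise $z'\in V(P)$ would precede $z$ on $R$), so $vRz$ is $M$-balanced from $v$ to $z$ and concatenates cleanly with whichever half of $P$ leaves $z$ through $zz'$, producing an $M$-saturated path from $v$ to one of $u,w$. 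The case $v\in V(P)$ is handled correctly as well.

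The one place that needs more care is your proposed proof of the reachability lemma. Deriving it from the two-ear/ear-decomposition theory risks circularity: the standard route to the two-ear theorem passes through Lov\'asz's canonical partition (Theorem~\ref{thm:canonicalpartition}), hence through Kotzig's theorem that $\sim$ is an equivalence relation on elementary graphs --- precisely the elementary instance of the statement you are proving. Citing Kotzig as prior art is legitimate, since the present theorem is a strict generalisation (and, as you correctly stress, $\gsim{G}$ does \emph{not} reduce to $\sim$ applied factor-componentwise), but then you should make that dependence explicit rather than hide it behind ear decompositions. A cleaner, self-contained route to the lemma is Gallai--Edmonds on $H-a$: the set $\{a\}\cup A(H-a)$ is a barrier of $H$, and since $H$ has a single factor-component any nonempty $C(H-a)$ would be separating in $H$ and hence a nontrivial union of factor-components, impossible; so $C(H-a)=\emptyset$. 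For $b\in D(H-a)$ an $M$-saturated $a$--$b$ path exists by Proposition~\ref{prop:two-element-extreme}, while for $b\in A(H-a)$ one has $b'\in D(H-a)$ (Proposition~\ref{prop:barrier} applied to the barrier $\{a\}\cup A(H-a)$), and dropping the final edge $bb'$ from an $M$-saturated $a$--$b'$ path yields an $M$-balanced $a$--$b$ path. This gives your lemma without any appeal to ear structure.
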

As you can see by the definition, 
if $G$ is an elementary graph then $\sim $ and $\gsim{G}$ coincide. 
Therefore, 
we call the equivalence classes by $\gsim{G}$, i.e. $V(G)/\gsim{G}$,  
the {\em generalized canonical partition} or just the {\em canonical partition}, 
and denote by $\gpart{G}$. 
For each $H\in\mathcal{G}(G)$, 
we define $\pargpart{G}{H}:=\{ S\in\gpart{G}: S\subseteq V(H)\}$; 
then, $\pargpart{G}{H}$ forms a partition of $V(H)$, 
since  by the definition each equivalence class is respectively contained in 
one of the factor-components.  
Note that $\pargpart{G}{H}$ is always a refinement of $\gpart{H}$, 
which equals to $\pargpart{H}{H}$.  
%

For each $H\in \mathcal{G}(G)$, 
we denote the family of the upper bounds of $H$ 
in the poset $(\mathcal{G}(G), \yield)$ 
as $\parupstar{G}{H}$, 
and $\parupstar{G}{H}\setminus \{H\}$ as $\parup{G}{H}$. 
Moreover, 
we denote the vertices contained in $\parupstar{G}{H}$  as $\vparupstar{G}{H}$; 
i.e.,  $\vparupstar{G}{H}:=\bigcup_{H'\in\parupstar{G}{H}} V(H')$.  
We also denote $\vparupstar{G}{H}\setminus V(H)$ as $\vparup{G}{H}$. 
Actually, 
the next theorem states that 
each strict upper bound of $H \in \mathcal{G}(G)$ in $(\mathcal{G}(G), \yield)$ 
is respectively ``assigned'' to some $S\in\pargpart{G}{H}$: 
\begin{theorem}[Kita~\cite{kita2012a, kita2012b}]\label{thm:cor}
Let $G$ be a factorizable graph, 
and let $H\in \mathcal{G}(G)$. For each connected component $K$ of $G[\vparup{G}{H}]$, 
there exists $S_K\in\pargpart{G}{H}$ such that 
$\Gamma(K)\cap V(H) \subseteq S_K$. 
\end{theorem}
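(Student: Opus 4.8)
The plan is to prove the pointwise strengthening that every two vertices of $\Gamma(K)\cap V(H)$ are related by $\gsim{G}$. Since $\gsim{G}$ is an equivalence relation (Theorem~\ref{thm:generalizedcanonicalpartition}) and, by its definition, each of its classes lies inside a single factor-component, such vertices would then all belong to one common class $S_K\in\pargpart{G}{H}$ (it lies in $\pargpart{G}{H}$ because it is contained in $V(H)$), which is exactly the assertion. As any two vertices $p,q\in\Gamma(K)\cap V(H)$ already lie in the same factor-component $H$, the definition of $\gsim{G}$ together with Proposition~\ref{prop:two-element-extreme} reduces the goal to showing, for a fixed perfect matching $M$ of $G$, that there is no $M$-saturated path of $G$ between $p$ and $q$.

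First I would record the separating facts. The sets $V(H)$, $\vparup{G}{H}$ and the vertex set of each component $K$ are separating, being unions of factor-components (Proposition~\ref{prop:separating}); hence $M$ restricts to a perfect matching of $G[V(K)]$, and every edge joining $V(H)$ to $V(K)$ — in particular the attachment edges $pa_p$ and $qa_q$ with $a_p,a_q\in V(K)$ — lies outside $M$.

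The main argument is by contradiction through Proposition~\ref{prop:circ2elem}. Assuming an $M$-saturated path $T$ between $p$ and $q$, I would splice $T$ with the attachment edges $pa_p$, $qa_q$ and with a connecting $M$-alternating path $W$ from $a_p$ to $a_q$ whose end-edges at $a_p$ and $a_q$ lie in $M$, so as to build an $M$-alternating circuit: at $p$ and $q$ the saturated path $T$ contributes matching edges while the attachment edges are non-matching, and at $a_p,a_q$ the path $W$ contributes matching edges, so the alternation is consistent around the cycle. This circuit passes through $p\in V(H)$ and through $a_p\in V(K)$, and Proposition~\ref{prop:circ2elem} would force them into the same factor-component — impossible, since $a_p$ lies in a strict upper bound of $H$, a factor-component different from $H$. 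Thus no such $T$ exists.

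The hard part is producing the connecting path $W$ with matching end-edges at $a_p$ and $a_q$. Here I would exploit the relations $H\yield H'$ for the factor-components $H'$ constituting $K$: the witnessing critical-inducing sets, via the definition of $\yield$ and Theorem~\ref{thm:order}, give that contracting $V(H)$ over the region of the upper bounds yields a factor-critical graph in which $a_p$ and $a_q$ are neighbours of the contracted vertex. Applying Proposition~\ref{prop:path2root} to a near-perfect matching of this factor-critical graph that exposes the contracted vertex produces $M$-balanced paths from $a_p$ and from $a_q$ into $V(H)$, each with its matching edge at the $K$-end, and I would splice these through $V(H)$ to obtain $W$. The remaining obstacle, which I expect to be the genuinely delicate point, is to reconcile this connection with the internal matching structure of $H$ and to handle the possible intersections of $T$ with $W$; for the latter I would invoke Proposition~\ref{prop:intersection_path}, whose conclusion that the intersection components are $M$-saturated permits rerouting the closed walk into a bona fide $M$-alternating circuit still meeting both $V(H)$ and $V(K)$.
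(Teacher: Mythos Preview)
This theorem is not proved in the present paper: it is quoted from the author's earlier work~\cite{kita2012a,kita2012b}, so there is no ``paper's own proof'' to compare against here. What the paper does supply, however, is Proposition~\ref{prop:ear}, which is the natural tool for the statement and which your argument bypasses in favour of a more fragile circuit construction.

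Your reduction is correct: it suffices to show that any two $p,q\in\Gamma(K)\cap V(H)$ satisfy $p\gsim{G}q$, equivalently that no $M$-saturated path joins them. The difficulty is your construction of the path $W$. You produce $M$-balanced paths from $a_p$ and from $a_q$ to the contracted vertex $h$ of $G[\vupstar{H}]/H$; in $G$ these land at vertices $p',q'\in V(H)$ via non-matching edges. To splice them ``through $V(H)$'' into a single $W$ whose end-edges at $a_p,a_q$ lie in $M$, you need an $M$-saturated path between $p'$ and $q'$ --- and there is no reason one exists (indeed it fails precisely when $p'\gsim{G}q'$, which a priori can happen). Even granting such a splice, $W$ then traverses $V(H)$, while $T$ is an arbitrary $M$-saturated path of $G$ and may also meet $V(H)$ (or $W$) freely; the closed walk you obtain is not, in general, a simple $M$-alternating circuit, and Proposition~\ref{prop:intersection_path} only tells you that intersection pieces are $M$-saturated --- it does not hand you a rerouting into a genuine circuit still meeting both $V(H)$ and $V(K)$. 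A small but telling degenerate case: if $a_p=a_q$, your circuit has two non-matching edges meeting at that vertex and is not alternating at all.

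The cleaner route is to use Proposition~\ref{prop:ear}(ii) directly and avoid Proposition~\ref{prop:circ2elem} altogether. For $q\in\Gamma(K)\cap V(H)$ with neighbour $a_q\in V(K)$, Proposition~\ref{prop:block} together with Proposition~\ref{prop:path2root} gives an $M$-balanced path $Q$ from $a_q$ to the contracted vertex $h$; since $K$ is a component of $G[\vup{H}]$, $Q-h$ stays in $V(K)$, and in $G$ the path $q a_q + Q$ is an $M$-ear relative to $H$ whose other end is some $p'\in\Gamma(K)\cap V(H)$, whence $q\gsim{G}p'$. The remaining work is to show these landing points all lie in a single class; that is where the real content is, and your current outline does not address it.
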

Based on Theorem~\ref{thm:cor}, 
we define $\parup{G}{S}$ as follows: 
$H'\in \parup{G}{S}$ if and only if 
$H\yield H'$ and  $H\neq H'$ holds   
and there exists a connected component $K$ of $G[\vup{H}]$ with 
$\Gamma(K)\cap V(H) \subseteq S$ such that $V(H')\subseteq V(K)$. 
Additionally, 
we denote the vertices contained in $\parup{G}{S}$ as $\vparup{G}{S}$; i.e.,  
$\vparup{G}{S}: = \bigcup_{H'\in\parup{G}{S}} V(H')$.   
We also define $\vparupstar{G}{S}:= \vparup{G}{S} \cup S$. 
Regarding these eight notations 
we sometimes omit the subscripts ``$G$'' if they are apparent from the contexts. 
Note that $\dot{\bigcup}_{T\in \pargpart{G}{H}} \up{T}  = \up{H}$.  

We call the canonical structures of factorizable graphs 
given by Theorems~\ref{thm:order}, \ref{thm:generalizedcanonicalpartition}, 
and \ref{thm:cor}
the {\em generalized cathedral structures} 
or just the {\em cathedral structures}. 
Now let us add some propositions used later in this paper: 
\begin{proposition}[Kita~\cite{kita2012a, kita2012b}]\label{prop:block}
Let $G$ be a factorizable graph, 
and let $H\in \mathcal{G}(G)$. 
Then, $G[\vupstar{H}]/H$ is factor-critical, 
so is each block of it. 
\end{proposition}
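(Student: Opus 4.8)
The plan is to prove first that $G[\vupstar{H}]/H$ is factor-critical, using Proposition~\ref{prop:path2root}, and then to deduce the statement about blocks from the general fact that every block of a factor-critical graph is factor-critical.

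Write $W := \vupstar{H}$, let $h$ be the vertex of $G[W]/H$ arising from contracting $V(H)$, and fix a perfect matching $M$ of $G$. Since $W$ and $\vup{H} = W\setminus V(H)$ are both unions of factor-components, they are separating (Proposition~\ref{prop:separating}); hence $M_{\vup{H}}$ is a perfect matching of $G[\vup{H}]$, which, viewed in $G[W]/H$, is a near-perfect matching $M'$ exposing precisely $h$. By Proposition~\ref{prop:path2root} it thus suffices to exhibit, for each vertex $u$ of $G[W]/H$, an $M'$-balanced path from $u$ to $h$. For $u=h$ the trivial path suffices, so suppose $u\in V(H')$ for some $H'\in\parup{G}{H}$; then $H\yield H'$, so there is a critical-inducing set $X$ for $H$ to $H'$.

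The decisive observation is that $X\subseteq W$. As a separating set, $X$ is a union of factor-components (Proposition~\ref{prop:separating}); and for \emph{any} factor-component $H''$ with $V(H'')\subseteq X$, the very same set $X$ is separating, contains $V(H)\cup V(H'')$, and has $G[X]/H$ factor-critical, so $X$ witnesses $H\yield H''$ and therefore $V(H'')\subseteq W$. With $X\subseteq W$ in hand, $M_{X\setminus V(H)}$ is exactly the part of $M'$ lying in $G[X]/H$. Applying Proposition~\ref{prop:path2root} to the factor-critical graph $G[X]/H$ with its near-perfect matching $M_{X\setminus V(H)}$ exposing $h$ produces an $M_{X\setminus V(H)}$-balanced path $P$ from $u$ to $h$ inside $G[X]/H$. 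Because $X\subseteq W$ and the matching edges of $P$ avoid the exposed vertex $h$, the edges of $P$ lying in $M'$ coincide with those lying in $M_{X\setminus V(H)}$; hence $P$ is an $M'$-balanced path from $u$ to $h$ in $G[W]/H$. This proves that $G[W]/H$ is factor-critical.

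It remains to treat the blocks, for which I would use the general fact that every block of a factor-critical graph is factor-critical and apply it to $G[W]/H$. I would prove this fact by induction on the number of blocks. A leaf block $B$ with cut vertex $c$ has odd order, as seen by deleting $c$ from the whole graph and counting the private vertices $V(B)\setminus\{c\}$, which are then matched among themselves; this also shows $B-c$ has a perfect matching. Deleting a private vertex $w$ instead forces, by the same parity count, the vertex $c$ to be matched inside $B$, so a near-perfect matching missing $w$ restricts to a perfect matching of $B-w$; thus $B$ is factor-critical. Finally, deleting $V(B)\setminus\{c\}$ leaves a factor-critical graph with one fewer block, whose blocks are factor-critical by induction; since the blocks of the whole graph are $B$ together with those of this smaller graph, the claim follows, and in particular it applies to $G[W]/H$.

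I expect the main obstacle to be the confinement $X\subseteq W$: once one knows that no critical-inducing set can reach outside the up-set of $H$, balanced paths discovered inside a single critical-inducing set transfer unchanged to $G[W]/H$, and the two directions of Proposition~\ref{prop:path2root} close the argument. The only remaining delicate point is the parity bookkeeping in the block statement, namely ensuring that the cut vertex of a leaf block is matched internally when an interior vertex is deleted.
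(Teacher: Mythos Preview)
The paper does not supply a proof of this proposition: it is quoted from the author's earlier work \cite{kita2012a, kita2012b} and used as a black box (most visibly in Proposition~\ref{prop:base}, whose proof is the one-line remark that it is ``a mere restatement of Proposition~\ref{prop:block}''). So there is no in-paper argument to compare against.

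That said, your argument is sound. The key step---showing that any critical-inducing set $X$ for $H$ to $H'$ satisfies $X\subseteq \vupstar{H}$---is exactly right: since $X$ is separating it decomposes into factor-components, and the \emph{same} $X$ witnesses $H\yield H''$ for every such component $H''$, forcing $H''\in\parupstar{G}{H}$. Once $X\subseteq W$, the transfer of the balanced path from $G[X]/H$ to $G[W]/H$ goes through because both near-perfect matchings expose $h$ and agree on edges internal to $X\setminus V(H)$; your remark that the matching edges of $P$ avoid $h$ handles the one edge of $P$ incident with $h$. The block claim is a standard fact, and your parity argument for the leaf block (even cardinality of $V(B)\setminus\{c\}$ forces $c$ to be matched inside $B$ whenever a private vertex is deleted) is correct; the induction step, that $G-(V(B)\setminus\{c\})$ remains factor-critical, also checks out by the same parity bookkeeping you outline.
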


\begin{proposition}[Kita~\cite{kita2012a, kita2012b}]\label{prop:ear}
Let $G$ be a factorizable graph and $M$ be a perfect matching of $G$, 
and let $H\in\mathcal{G}(G)$. 
Let $P$ be an $M$-ear relative to $H$. 
\renewcommand{\labelenumi}{\theenumi}
\renewcommand{\labelenumi}{{\rm \theenumi}}
\renewcommand{\theenumi}{(\roman{enumi})}
\begin{enumerate}
\item
Let $H'\in\mathcal{G}(G)$.  
If $P$ is through $H'$,  
then $H\yield H'$. 
\item 
The end vertices $u, v\in V(H)$ of $P$ satisfies $u\gsim{G} v$. 
\end{enumerate}
\end{proposition}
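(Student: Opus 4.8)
For part~(ii) I would argue by an alternating-circuit contradiction, and for part~(i) by exhibiting an explicit critical-inducing set. First I would record two facts about the $M$-ear $P$ used throughout. Write $u,v\in V(H)$ for its end vertices and $R:=P-V(H)$ for its internal part; that $P$ is an $M$-ear means $R$ is $M$-saturated, so the internal vertices of $P$ are matched to one another by $M$ and the edges of $P$ incident with $u$ and with $v$ lie outside $M$. Hence $P$ is in fact an $M$-\exposed path between $u$ and $v$. Moreover, every factor-component is separating, so Proposition~\ref{prop:separating} gives $\delta(K)\cap M=\emptyset$ for each $K\in\mathcal{G}(G)$; thus each $M$-edge of $R$ stays inside one factor-component, and in particular every internal vertex of $P$ lies in a factor-component different from $H$. (If $P$ is a circuit, then $u=v$ and part~(ii) is immediate from the definition of $\gsim{G}$, so below I treat the case that $P$ is a path.)

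For part~(ii) I would invoke Proposition~\ref{prop:two-element-extreme}: since $u,v$ already lie in the common factor-component $H$, it suffices to show that $G-u-v$ is not factorizable, i.e.\ that there is no $M$-saturated path between $u$ and $v$. Suppose such a path $Q$ exists, chosen of minimum length. At $u$ the path $P$ departs along the non-matching edge $uw$, where $w$ is the first internal vertex of $P$, while $Q$ departs along $u$'s matching edge (which lies in $H$); these are distinct edges. Consider the even subgraph $D:=E(P)\triangle E(Q)$: since $P$ and $Q$ share their ends, every vertex of $D$ has even degree and $u$ has degree exactly~$2$. Decomposing $D$ into edge-disjoint circuits and taking the circuit $C$ through the edge $uw$, I would argue that $C$ is $M$-alternating; the only obstruction is that $P$ and $Q$ share a matching edge while approaching it along different non-matching edges, and I would rule this out by the minimal choice of $Q$ (equivalently, one may extract $C$ from the closed $M$-alternating walk formed by $P$ followed by $Q$ reversed). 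Then $C$ is an $M$-alternating circuit containing $u\in V(H)$ and the internal vertex $w$, which lies in a factor-component distinct from $H$; this contradicts Proposition~\ref{prop:circ2elem}. Hence no such $Q$ exists and $u\gsim{G}v$.

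For part~(i) I would build a critical-inducing set for $H$ to $H'$ directly. Let $X$ be the union of $V(H)$ with the vertex sets of all factor-components meeting $R$; since $P$ is through $H'$ we have $V(H)\cup V(H')\subseteq X$, and $X$ is separating as a union of factor-components. It then remains, by the definition of $\yield$, to prove that $G[X]/H$ is factor-critical. Because $X$ is separating, $M_X$ is a perfect matching of $G[X]$; contracting $H$, which is itself perfectly matched by $M_H$, deletes exactly the matching edges inside $H$, so the contracted vertex $h$ becomes exposed and we obtain a near-perfect matching $\bar M$ of $G[X]/H$ exposing $h$. By Proposition~\ref{prop:path2root} it suffices to construct an $\bar M$-balanced path from every vertex of $X\setminus V(H)$ to $h$. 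The ear supplies the start: in $G[X]/H$ the two ends $u,v$ of $P$ are identified with $h$, so $P$ becomes an $\bar M$-alternating circuit through $h$, and travelling around it yields an $\bar M$-balanced path to $h$ from every vertex lying on $P$.

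The main obstacle is the remaining propagation in part~(i): extending balanced reachability of $h$ from the vertices on $P$ to \emph{all} vertices of the factor-components contained in $X$ (separating forces $X$ to contain these whole components, so this cannot be avoided). Each such component is elementary and contains a vertex already known to reach $h$, and I would propagate reachability across it using the alternating-path structure internal to an elementary graph, organised as an induction on the number of factor-components met by $P$ and using Proposition~\ref{prop:intersection} to split $P$ at the boundaries between factor-components. Certifying $G[X]/H$ factor-critical in this way is the crux; once it is in place, $H\yield H'$ follows, and with it the localisation of $P$ inside $G[\vupstar{H}]$ compatible with Proposition~\ref{prop:block}.
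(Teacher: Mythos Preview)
The paper does not prove this proposition here; it is quoted from the author's earlier work \cite{kita2012a,kita2012b} without argument, so there is no in-paper proof to compare against. I can therefore only assess your attempt on its own merits, and both parts have real gaps.

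For (ii), the circuit-extraction step is where the argument breaks. Suppose $Q$ visits the first internal vertex $w$ of $P$. Since $M$ is a matching, $Q$ must use the same $M$-edge $ww'$ that $P$ uses at $w$; hence in $E(P)\triangle E(Q)$ the two surviving edges at $w$ are both non-$M$ edges (namely $uw$ from $P$ and the other non-$M$ edge of $Q$ at $w$), and the circuit through $w$ fails to be $M$-alternating there. You assert that the minimal choice of $Q$ rules this out, but you give no mechanism, and none is apparent: nothing prevents a shortest $M$-saturated $u$--$v$ path from visiting internal vertices of $P$, and sharing the edge $ww'$ with $P$ does not let you splice $P$ into $Q$ to shorten it (the parities do not match). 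The alternative you mention---pulling a circuit out of the closed $M$-alternating walk $P\cdot Q^{-1}$---\emph{can} be made to work, because such a walk decomposes edge-disjointly into $M$-alternating circuits and one of them must contain the boundary edge $uw$; but that decomposition lemma is itself a nontrivial statement (the naive ``first repeated vertex'' extraction can yield an odd cycle that is not $M$-alternating at the seam), and you would have to prove it.

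For (i), you correctly isolate the crux: reaching $h$ from the vertices on $P$ is easy, but your set $X$ is forced to contain the \emph{entire} factor-component $K$ whenever $P$ touches $K$, so you must produce an $\bar M$-balanced path to $h$ from every $z\in V(K)$, including those far from $P$. Your plan---``propagate using the alternating-path structure internal to an elementary graph, organised as an induction''---is not a proof. Concretely, for $z\in V(K)\setminus V(P)$ you need to combine an $M$-path inside $K$ from $z$ to some vertex of $P\cap K$ with a suitable segment of $P$, controlling the parity at the junction and avoiding vertex repetitions when $P$ meets $K$ in several disjoint $M$-saturated segments. Carrying this out is essentially the construction of the order $\yield$ in \cite{kita2012a,kita2012b}; it cannot be waved through in a sentence.
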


\section{A Generalization of Lov\'asz's Canonical Partition}%
\label{sec:barrier}
\subsection{Our Main Result}
Our main result is the following: 
\begin{maintheorem}\label{thm:main}
Let $G$ be a factorizable graph, 
and $X\subseteq V(G)$ be an odd-maximal barrier of $G$. 
Then, 
$X$ is a disjoint union of some members of $\gpart{G}$; 
namely, 
there exists $S_1,\ldots, S_k\in\gpart{G}$ such that 
$X = S_1\dot{\cup} \cdots \dot{\cup}S_k$.  
Additionally, 
odd components of $G-X$ have structures as follows: 
$D_X = (\vupstar{G_1}\setminus\vupstar{S_1}) 
\dot{\cup}\cdots\dot{\cup}
(\vupstar{G_k}\setminus\vupstar{S_k})$, 
where $G_i\in\mathcal{G}(G)$ is such that $S_i\in\pargpart{G}{G_i}$ 
for each $i\in\{1,\ldots,k\}$. 
\end{maintheorem}
This theorem states that 
in general graphs 
the equivalence classes of the generalized canonical partition 
are the ``atoms'' that constitute odd-maximal barriers, 
and that odd components associated to odd-maximal barriers are also described canonically by 
the generalized cathedral structure. 
As we see in previous sections, 
among two formulations of the canonical partition of elementary graphs, 
the generalization of the canonical partition introduced in \cite{kita2012a, kita2012b} 
is  attained based on Kotzig's formulation; 
here we show it is as well a generalization based on Lov\'asz's formulation. 

This theorem is an immediate corollary of Theorem~\ref{thm:barrier2up}, 
and the rest of this paper is to prove Theorem~\ref{thm:barrier2up}. 
We shall prove it 
 by examining the reachability of alternating paths 
from two viewpoints---
regarding odd-maximal barriers and regarding the generalized cathedral structure---%
and showing their equivalence. 
Let us mention an additional property used later in this paper. 
\begin{proposition}[Kir\'aly~\cite{kiraly1998}] \label{prop:odd-maximal}
A barrier $X\subseteq V(G)$ of a graph $G$ is odd-maximal 
if and only if 
all the odd components of $G-X$ are factor-critical. 
\end{proposition}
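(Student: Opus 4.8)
The plan is to localize the odd-maximality condition to the individual odd components of $G-X$ and then to characterize factor-criticality of a component through its own barriers. First I would set up the basic counting identity. Write $K_1,\dots,K_r$ for the odd components of $G-X$, and for $Y\subseteq D_X$ put $Y_j:=Y\cap V(K_j)$. Deleting $Y$ from $G-X$ leaves every even component and every untouched $K_j$ intact and replaces each touched $K_j$ by the components of $K_j-Y_j$, so
\[
q_G(X\cup Y)-|X\cup Y| \;=\; \bigl(q_G(X)-|X|\bigr)\;+\!\!\sum_{j:\,Y_j\neq\emptyset}\!\!\bigl(q_{K_j}(Y_j)-|Y_j|-1\bigr).
\]
Since $X$ is a barrier the first term equals $|V(G)|-2\nu(G)$, and applying the Berge formula with $Y$ concentrated in a single component shows every bracketed term is at most $0$. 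In particular each odd component $K$ satisfies $q_K(Z)-|Z|\le 1$ for all $Z\subseteq V(K)$, and as $q_K(\emptyset)=1$ its deficiency is exactly $1$. The identity then shows that $X\cup Y$ is again a barrier precisely when every nonempty $Y_j$ satisfies $q_{K_j}(Y_j)-|Y_j|=1$; hence $X$ is \emph{not} odd-maximal if and only if some odd component $K$ of $G-X$ possesses a nonempty barrier $Z$, i.e.\ a nonempty $Z\subseteq V(K)$ with $q_K(Z)-|Z|=1$.

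It then remains to prove, for a connected graph $K$ of deficiency $1$, the equivalence: $K$ is factor-critical if and only if $\emptyset$ is its only barrier. For the forward implication I would argue directly. If $Z\neq\emptyset$ were a barrier, choosing $v\in Z$ makes $K-v$ factorizable by factor-criticality, while $(K-v)-(Z\setminus\{v\})=K-Z$ gives $q_{K-v}(Z\setminus\{v\})=q_K(Z)=|Z|+1=|Z\setminus\{v\}|+2$, contradicting the Berge formula on the factorizable graph $K-v$. For the converse I would invoke the Gallai-Edmonds structure theorem~\cite{lp1986} on $K$: its set $A(K)=\Gamma(D(K))$ is a barrier attaining the deficiency, so $q_K(A(K))-|A(K)|=1$; and when $K$ is not factor-critical $A(K)\neq\emptyset$, since $D(K)\neq\emptyset$ while $A(K)=\emptyset$ would make $D(K)$ and $C(K)$ non-adjacent and hence, by connectivity, force $D(K)=V(K)$, i.e.\ $K$ factor-critical. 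Thus a non-factor-critical odd component supplies the required nonempty barrier $Z=A(K)$.

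Combining the two parts yields the proposition: an odd component of $G-X$ is free of nonempty barriers exactly when it is factor-critical, so $X$ is odd-maximal if and only if all odd components of $G-X$ are factor-critical. I expect the main obstacle to be the converse half of this component-level equivalence---producing, for a non-factor-critical odd component, a nonempty barrier whose deficiency value is exactly $1$; this is precisely where the deficiency-$1$ normalization from the counting step must be combined with the Gallai-Edmonds description of $A(K)$. By contrast, the counting identity is routine once the parities of the pieces of $K_j-Y_j$ are tracked, and the implication that a factor-critical component has no nonempty barrier is a short deletion argument.
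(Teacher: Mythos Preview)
The paper does not actually prove this proposition; it is quoted from Kir\'aly~\cite{kiraly1998} and used as a black box. Your argument is therefore not to be compared against anything in the paper, and on its own merits it is correct.

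A couple of remarks. Your counting identity is exactly the content of the folklore Proposition stated (also without proof) in the paper's Appendix as Proposition~\ref{prop:increment}, specialized to $Y\subseteq D_X$: a superset of a barrier is again a barrier if and only if the added vertices form barriers in the respective components of $G-X$. Your derivation of it is clean. The reduction to the component-level statement ``$K$ is factor-critical iff $K$ has no nonempty barrier'' (for connected $K$ of deficiency~$1$) is the right way to organize the proof, and both implications are handled correctly: the forward one by deleting a vertex of a putative nonempty barrier and overcounting odd components in the resulting factorizable graph, the converse by invoking Gallai--Edmonds to exhibit $A(K)$ as a nonempty barrier. The only step that deserves a word of justification, which you do give, is that $A(K)=\emptyset$ together with connectivity and deficiency~$1$ forces $D(K)=V(K)$ and hence factor-criticality; this uses that a maximum matching of a deficiency-$1$ graph exposes exactly one vertex.
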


\subsection{Barriers vs. Alternating Paths}\label{sec:projection}
In this subsection we introduce some lemmas on  
the reachability of alternating paths regarding odd-maximal barriers. 
Given an odd-maximal barrier $X$ of a factorizable graph $G$, 
we generate a bipartite graph,  
thus canonically decompose $X\cup D_X$ and 
state the reachability using the DM-decomposition as a language.  
This technique of generating a bipartite graph 
has been known \cite{lp1986, frank1993} and 
essences of ideas are found there. 
However, we first reveal it thoroughly 
to obtain Proposition~\ref{prop:pathprojection} and Theorem~\ref{thm:path2dm}. 
\begin{proposition}[folklore]\label{prop:extreme2path}
Let $G$ be a factorizable graph, and $M$ be a perfect matching of $G$. 
If $X\subseteq V(G)$ is a barrier, then  
for any $u, v\in X$ there is no $M$-saturated path between $u$ and $v$. 
\end{proposition}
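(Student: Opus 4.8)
The plan is to argue by contradiction, reducing the existence of an $M$-saturated path to a violation of the Berge formula. First I would dispose of the trivial case $u = v$, for which no $M$-saturated path---having an odd, hence positive, number of edges---can exist. So assume $u \neq v$ and suppose, for contradiction, that some $M$-saturated path joins $u$ and $v$. By Proposition~\ref{prop:two-element-extreme}, this immediately gives that $G - u - v$ is factorizable.

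The heart of the argument is a counting identity. Setting $X' := X \setminus \{u, v\}$, I would observe that deleting $u$ and $v$ and then $X'$ from $G$ deletes exactly $X$, so $(G - u - v) - X' = G - X$ and hence $q_{G-u-v}(X') = q_G(X)$. Since $G$ is factorizable we have $|V(G)| - 2\nu(G) = 0$, so the barrier condition on $X$ reads $q_G(X) = |X|$ by the Berge formula. Combining these gives $q_{G-u-v}(X') - |X'| = q_G(X) - (|X| - 2) = 2$. But $G - u - v$ is also factorizable, so the right-hand side of the Berge formula for $G - u - v$ equals $0$, whence $q_{G-u-v}(Y) - |Y| \le 0$ for every $Y$; taking $Y = X'$ contradicts the value $2$ just computed, and the proof is complete.

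I do not expect a serious obstacle here, the statement being folklore; the only point requiring care is the verification that $(G - u - v) - X' = G - X$ and that the two odd-component counts therefore agree, which hinges on $u, v \in X$. As an aside, the same conclusion can be reached more concretely: the symmetric difference $M \triangle E(P)$ of $M$ with an $M$-saturated path $P$ is a matching exposing exactly $u$ and $v$ and covering all other vertices, which both re-proves the factorizability of $G - u - v$ and, read against Proposition~\ref{prop:barrier}, makes the failure of the barrier count transparent. I would nevertheless present the Berge-formula version above, as it is shortest and relies only on results already established.
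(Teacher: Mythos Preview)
Your proof is correct and follows essentially the same route as the paper: both show $G-u-v$ is factorizable from the $M$-saturated path (the paper via the symmetric difference $M\triangle E(P)$ directly, you via Proposition~\ref{prop:two-element-extreme}, which is the same thing packaged), then observe $(G-u-v)-(X\setminus\{u,v\})=G-X$ and derive a contradiction with the Berge formula from the odd-component count. The only cosmetic differences are that you handle the case $u=v$ explicitly and compute the surplus $q_{G-u-v}(X')-|X'|=2$ numerically, whereas the paper phrases the contradiction qualitatively.
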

\begin{proof}
Suppose the claim fails, namely, 
there is an $M$-saturated path, say $P$,  between 
vertices $u$ and $v$.  
Then, $M\Delta E(P)$, i.e., $(M\setminus E(P))\cup (E(P)\setminus M)$,  
forms a perfect matching of $G-u-v$; 
accordingly, $G-u-v$ is factorizable. 
Now recall that 
since $X$ is a barrier of the factorizable graph $G$, 
$G-X$ has exactly $|X|$ odd components by the definition of barriers. 
Therefore, 
the graph $(G-u-v)-(X\setminus \{u, v\})$, 
which equals to $G - X$, 
also has $|X|$ odd components; 
this means by the Berge formula that 
 $G-u-v$ is not factorizable, a contradiction. 
\qed
\end{proof}

\begin{proposition}[folklore]\label{prop:x2d}
Let $G$ be a factorizable graph,  $M$ be a perfect matching, 
and $X$ be a barrier of $G$. 
Then, for any $x\in X$ and $y\in C_X$, 
there is no $M$-saturated path between $x$ and $y$ 
nor $M$-balanced path from $x$ to $y$. 
\end{proposition}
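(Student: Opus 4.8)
The plan is to treat the two kinds of path separately, disposing of $M$-saturated paths first and then reducing the $M$-balanced case to that one. For the saturated case I would invoke Proposition~\ref{prop:two-element-extreme}: an $M$-saturated path between $x$ and $y$ exists if and only if $G-x-y$ is factorizable, so it suffices to prove that $G-x-y$ is \emph{not} factorizable. To this end I would consider the set $S := X\setminus\{x\}$ inside $G-x-y$. Since $x\in X$ and $y\in C_X$, deleting $S$ from $G-x-y$ leaves exactly $G-X-y$. In $G-X$ the $|X|$ odd components constituting $D_X$ survive untouched (removing $y$ only affects the component of $G-X$ containing $y$), while $y$ lies in one even component $C_0$; since $C_0-y$ has an odd number of vertices, it contributes at least one odd component. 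Thus $q_{G-x-y}(S)\ge |X|+1$ whereas $|S|=|X|-1$, so $q_{G-x-y}(S)-|S|\ge 2$. By the Berge formula the deficiency of $G-x-y$ is then positive, so $G-x-y$ has no perfect matching and, by Proposition~\ref{prop:two-element-extreme}, no $M$-saturated path joins $x$ and $y$. Here the hypothesis that $X$ is a barrier enters through $q_G(X)=|X|$, and the choice $y\in C_X$ (an \emph{even} component) is exactly what produces the extra odd component.

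For the $M$-balanced case I would convert the path into a saturated one. Let $P$ be an $M$-balanced path from $x$ to $y$, so $x$ is covered and $y$ is exposed by $M\cap E(P)$. By Proposition~\ref{prop:barrier}(iii) the matching $M$ restricts to a perfect matching of $G[C_X]$, so the global $M$-partner $y'$ of $y$ again lies in $C_X$; moreover $yy'\notin E(P)$ because $M\cap E(P)$ exposes $y$. The key point is that $y'\notin V(P)$: it is not an endpoint ($y'\neq x$ as $x\in X$, and $y'\neq y$), and it cannot be an interior vertex, for every interior vertex of $P$ is covered by $M\cap E(P)$ and hence has its unique $M$-edge on $P$, whereas $y'$ has its $M$-edge $yy'$ off $P$. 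Consequently $P+yy'$ is a genuine path; since $P$ ends at $y$ with a non-matching edge and $yy'\in M$, the path $P+yy'$ has an odd number of edges and $M\cap E(P+yy')=(M\cap E(P))\cup\{yy'\}$ is a perfect matching of it, i.e.\ $P+yy'$ is an $M$-saturated path between $x$ and $y'\in C_X$. This contradicts the saturated case already established.

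The step I expect to be the real obstacle is precisely this reduction in the balanced case. The naive attempt---forming $M\triangle E(P)$ and reading off a perfect matching of $G-x-y$---fails here, because $y$ is covered by $M$ through the off-path edge $yy'$, so $M\triangle E(P)$ would cover $y$ twice and is not even a matching. Recognizing that one must instead \emph{append} the edge $yy'$ to repair parity, and then checking carefully that $y'$ does not already lie on $P$ so that the result is a path rather than a closed walk, is the delicate part; everything else is routine counting with the Berge formula.
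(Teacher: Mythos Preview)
Your proof is correct, but it takes a different route from the paper's. The paper handles both cases in one stroke by a \emph{first-crossing} argument: starting from $x$, it lets $z$ be the first vertex of $P$ in $C_X$ and $w$ the vertex immediately before $z$; since there are no edges between $D_X$ and $C_X$ one has $w\in X$, and since by Proposition~\ref{prop:barrier} no $M$-edge joins $X$ to $C_X$ one has $wz\notin M$, so $xPw$ is $M$-saturated between two vertices of the barrier $X$, contradicting Proposition~\ref{prop:extreme2path}. This works verbatim whether $P$ is saturated or balanced.

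Your argument instead treats the cases separately: for the saturated case you bypass Proposition~\ref{prop:extreme2path} entirely and do a direct Berge-formula count showing $G-x-y$ has positive deficiency (this is a nice self-contained calculation, exploiting precisely that $y$ sits in an \emph{even} component); for the balanced case you append the edge $yy'$ to manufacture a saturated path to $y'\in C_X$. The verification that $y'\notin V(P)$ is clean and correct. Your approach is a bit longer and requires the extra check about $y'$, while the paper's first-crossing trick is shorter and handles both cases uniformly; on the other hand, your saturated-case counting is more elementary in that it does not need to invoke Proposition~\ref{prop:extreme2path} as a black box.
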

\begin{proof}
Suppose otherwise, that is, there is a path $P$ 
which is $M$-saturated between $x$ and $y$
 or $M$-balanced from $x$ to $y$. 
Trace $P$ from $x$;  let $z$ be the first vertex we encounter that is in $C_X$, 
and let $w$ be the last vertex in $X\cup D_X$ we encounter 
if we trace $xPz$ from $x$. 
%
Apparently, $w\in X$ and  $wz\in E(xPz)$ hold, and  
by Proposition~\ref{prop:barrier},  $wz\not\in M$ holds.
Therefore,   
$xPw$ is an $M$-saturated path between $x$ and $w$, 
contradicting Proposition~\ref{prop:extreme2path}.
\qed
\end{proof}

\begin{proposition}[folklore]%
\label{prop:x2xd}
Let $G$ be a factorizable graph, $M$ be a perfect matching, 
and $X\subseteq V(G)$ be an odd-maximal barrier. 
Then, for any $u\in X$ and $v\in X\cup C_X$ 
there is no $M$-saturated path between $u$ and $v$. 
\end{proposition}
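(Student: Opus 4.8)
The plan is to dispose of this statement by a direct case analysis on the location of $v$, since $X \cup C_X$ splits as the disjoint union of $X$ and $C_X$. The observation driving the proof is that each of the two cases is precisely the content of a proposition already established just above, so no genuinely new argument is required; indeed, the odd-maximal hypothesis is not even needed here, and it suffices that $X$ be a barrier.

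First I would treat the case $v \in X$. Then both $u$ and $v$ lie in the barrier $X$, and Proposition~\ref{prop:extreme2path}---which asserts that no two vertices of a barrier are joined by an $M$-saturated path---immediately yields that there is no $M$-saturated path between $u$ and $v$. Next I would treat the case $v \in C_X$. Then $u \in X$ and $v \in C_X$, so I would apply Proposition~\ref{prop:x2d} with $x := u$ and $y := v$; that proposition guarantees there is no $M$-saturated path between a vertex of $X$ and a vertex of $C_X$ (it in fact rules out $M$-balanced paths as well, which is more than we need here). Since $v$ ranges only over $X \cup C_X$, these two cases are exhaustive, and in each the desired conclusion holds.

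There is no real obstacle in this argument: the entire substance lives in the two preceding folklore propositions. Were one instead to prove the statement from scratch, the two nontrivial ingredients would be (i) the Berge-formula counting argument---an $M$-saturated path between $u,v\in X$ would let one form the symmetric difference with $M$ along the path to expose exactly $u$ and $v$, contradicting that $G-X$ still has $|X|$ odd components (the mechanism used in Proposition~\ref{prop:extreme2path})---and (ii) the tracing argument that walks along a putative saturated path from $u$ into $C_X$ and isolates the last visited vertex of $X$, reducing to case (i) via Proposition~\ref{prop:barrier} (the mechanism of Proposition~\ref{prop:x2d}). Given that both are available, I would simply cite them and state the case split, keeping the proof to a couple of lines.
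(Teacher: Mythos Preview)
Your proposal is correct and matches the paper's own proof essentially verbatim: the paper simply says the statement is immediate from Proposition~\ref{prop:extreme2path} and Proposition~\ref{prop:x2d}, which is exactly your case split on $v\in X$ versus $v\in C_X$. Your side remark that only the barrier hypothesis (not odd-maximality) is actually used is also accurate.
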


\begin{proof}
This is immediate by Proposition~\ref{prop:extreme2path} 
and Proposition~\ref{prop:x2d}. 
\qed
\end{proof}

\begin{definition}
Let $G$ be a graph, $X\subseteq V(G)$, 
and $K_1,\ldots, K_l$ be the odd components of $G-X$. 
We denote  the bipartite graph 
resulting from deleting the even components of $G-X$, 
removing the edges whose vertices are all contained in $X$,  and 
contracting each $K_i$, where $i = 1,\ldots, l$,  respectively into one vertex,
as $\hxg{X}{G}$. 
Namely, $\hxg{X}{G} := (G - C_X - E(G[X]))/K_1/\cdots/K_l$. 
\end{definition}

The next proposition is easily seen 
by Propositions~\ref{prop:barrier} and \ref{prop:odd-maximal} 
and enables us to discuss Proposition~\ref{prop:pathprojection} and so on. 
%
\begin{proposition}[might be a folklore]\label{prop:projection}
Let $G$ be a factorizable graph and $X$ be an  odd-maximal barrier of $G$.
If $M\subseteq E(G)$ is a perfect matching of $G$, then 
$M\cap \delta(X)$ forms a perfect matching of $\hxg{X}{G}$.
Conversely, \if0 Additionally, \fi 
 if $M'$ is a perfect matching of $\hxg{X}{G}$,
there is a perfect  matching $M$ of $G$ such that 
$M'= M\cap \delta(X)$.
\end{proposition}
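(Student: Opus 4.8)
The plan is to prove the two implications separately, using the constraints that Proposition~\ref{prop:barrier} imposes on any perfect matching relative to a barrier, together with the factor-criticality of the odd components supplied by Proposition~\ref{prop:odd-maximal}. Throughout I would use that, since $G$ is factorizable, the Berge formula forces $q_G(X) = |X|$, so $G - X$ has exactly $|X|$ odd components $K_1, \ldots, K_l$ with $l = |X|$; hence the two color classes of $\hxg{X}{G}$, namely $X$ and the $l$ contracted vertices, both have cardinality $|X|$, and $\hxg{X}{G}$ is balanced bipartite.

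For the forward direction, let $M$ be a perfect matching of $G$. First I would invoke Proposition~\ref{prop:barrier}(i) to see that every $x \in X$ is matched by $M$ into $D_X$, so $M \cap \delta(X) = M \cap E[X, D_X]$ contains no edge meeting $C_X$; these $|X|$ edges therefore survive into $\hxg{X}{G}$ as edges incident with $X$. Next I would use Proposition~\ref{prop:barrier}(iii), that $M$ restricts to a perfect matching of $G[C_X]$, to rule out that the unique external matching edge of an odd component (Proposition~\ref{prop:barrier}(ii)) lands in $C_X$. The remaining task is the counting step: each of the $|X|$ odd components has exactly one edge of $M$ in its boundary, while each of the $|X|$ edges matching $X$ into $D_X$ enters some odd component; since two such edges cannot enter the same component without violating Proposition~\ref{prop:barrier}(ii), these edges set up a bijection between $X$ and $\{K_1, \ldots, K_l\}$. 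Consequently no matching edge joins two distinct odd components, and after contraction $M \cap \delta(X)$ covers each vertex of $X$ and each contracted vertex exactly once, i.e. it is a perfect matching of $\hxg{X}{G}$.

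For the converse, let $M'$ be a perfect matching of $\hxg{X}{G}$. I would lift each edge of $M'$ to the corresponding edge of $G$ under the identification convention of Section~\ref{sec:pre}, obtaining a matching in which each $x \in X$ is joined to a single vertex $v_i$ of a distinct odd component $K_i$. To complete it to a perfect matching of $G$ I would, for each $i$, use that $K_i$ is factor-critical by Proposition~\ref{prop:odd-maximal} to choose a perfect matching of $K_i - v_i$, and adjoin a perfect matching of $G[C_X]$, which exists because applying Proposition~\ref{prop:barrier}(iii) to any perfect matching of the factorizable graph $G$ shows $G[C_X]$ is itself factorizable. The union $M$ of the lifted edges, the matchings of the $K_i - v_i$, and the matching of $G[C_X]$ is readily checked to be a perfect matching of $G$; since only the lifted edges meet $X$, we recover $M \cap \delta(X) = M'$ after contraction.

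The main obstacle is the counting argument in the forward direction: establishing that the single boundary edge of each odd component is precisely the edge by which $X$ reaches that component, so that these edges form a bijection, with neither an odd-component-to-odd-component matching edge nor an edge into $C_X$ occurring. This is where Proposition~\ref{prop:barrier}(ii)--(iii) and the equality $q_G(X) = |X|$ must be combined with care. By contrast, the converse is essentially a direct construction, and it is only here that odd-maximality is genuinely needed, through the factor-criticality of the odd components (Proposition~\ref{prop:odd-maximal}) that furnishes their interior matchings.
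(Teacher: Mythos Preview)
Your proof is correct and follows essentially the same approach as the paper: the forward direction via Proposition~\ref{prop:barrier} and the converse by lifting $M'$ and filling in with near-perfect matchings of the factor-critical odd components (Proposition~\ref{prop:odd-maximal}) together with a perfect matching of $G[C_X]$. One small simplification: your concern that a matching edge might join two distinct odd components is vacuous, since $K_i$ and $K_j$ are different connected components of $G-X$ and hence have no edges between them at all; this makes the counting step in the forward direction a bit easier than you suggest.
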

\begin{proof}
The first claim follows by Proposition~\ref{prop:barrier}. 
For the second claim, 
first note that $G[C_X]$ is factorizable by Proposition~\ref{prop:barrier}, 
and let $N$ be a perfect matching of $G[C_X]$. 
By Proposition~\ref{prop:odd-maximal}, 
the odd components $K_1,\ldots, K_l$ of $G-X$ are 
each factor-critical. 
For each $i = 1,\ldots, l$ let $M_i$ be a near-perfect matching of $K_i$ 
exposing only the vertex covered by $M'$. 
Then, $N\cup M'\cup \bigcup_{i=1}^l M_i$ forms a desired perfect matching. 
\qed
\end{proof}

The next proposition shows that the reachabilities 
of alternating paths are equivalent between $G$ and $\hxg{X}{G}$, 
which, with Proposition~\ref{prop:dm}, 
derives Theorem~\ref{thm:path2dm} immediately. 
\begin{proposition}\label{prop:pathprojection}
Let $G$ be a factorizable graph, $X\subseteq V(G)$ be an odd-maximal barrier of $G$, 
and $\mathcal{K} := \{K_i\}_{i=1}^l$    \if0 K_1,\ldots, K_l \fi
be the family of odd components of $G-X$, where $l = |X|$. 
Let $M$ be a perfect matching of $G$, and 
$M'$ be the perfect matching of $\hxg{X}{G}$ such that $M' = M \cap \delta(X)$. 
Let $u, v \in X$,  and $w\in V(K)$, where $K \in \mathcal{K} \if0\{K_i\}_{i=1}^l\fi$, 
and let $w_K$ be the contracted vertex of $\hxg{X}{G}$ corresponding to $K$. 
\renewcommand{\labelenumi}{\theenumi}
\renewcommand{\labelenumi}{{\rm \theenumi}}
\renewcommand{\theenumi}{(\roman{enumi})}
\begin{enumerate} 
\item \label{item:projection}
Then, for any $M$-balanced path (resp. $M$-saturated path) $P$ of $G$ 
from $u$ to $v$ (resp. between $u$ and $w$), 
$P' = P/K_1/\cdots/K_l$ is an $M'$-balanced path 
(resp. $M'$-saturated path) of $\hxg{X}{G}$
from $u$ to $v$ (resp. between $u$ and $w_K$). 
\item \label{item:expansion} 
Conversely, 
for any $M'$-balanced path (resp. $M'$-saturated path) $P'$ 
 from $u$ to $v$ in $\hxg{X}{G}$
 (resp. between $u$ and $w_K$),   
there is an $M$-balanced path (resp. $M$-saturated path) $P$
 from $u$ to $v$ in $G$ (resp. between $u$ and $w$)  
such that $P' = P/K_1/\cdots/K_l$.
\end{enumerate}
\end{proposition}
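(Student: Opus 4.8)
The plan is to prove the two directions separately, handling the \emph{balanced} and \emph{saturated} variants in parallel since they differ only in the matching status at the far endpoint. Throughout I would exploit Proposition~\ref{prop:barrier}: for the perfect matching $M$ each $K_i$ satisfies $|\delta(K_i)\cap M| = 1$, so each odd component has a unique $M$-edge leaving it, whose endpoint inside $K_i$ is the vertex exposed by $M_{K_i}$; call this vertex the \emph{root} $r_i$ of $K_i$. Since $M' = M\cap\delta(X)$, these unique $M$-edges are precisely the edges of $M'$ (cf.\ Proposition~\ref{prop:projection}), and after contraction each $w_{K_i}$ is $M'$-matched to the vertex of $X$ to which $r_i$ is matched.

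For direction~(i), I would trace $P$ from the endpoint $u\in X$, which is $M$-covered, so its first edge lies in $M$; that edge is $u$'s matching edge, hence the unique $M$-edge of $\delta(C)$ for $u$'s component $C$, and it enters $C$ at its root. The heart of the argument is a forced-alternation claim, proved by induction along $P$: every vertex of $X$ met by $P$ after $u$ is entered by a non-$M$ edge, and consequently every component met by $P$ is entered exactly at its root, through its unique $M$-crossing. Indeed, once $P$ enters a component $C$ at its root via the unique $M$-edge of $\delta(C)$, it can leave $C$ for $X$ only through a non-$M$ crossing; the vertex of $X$ so reached is then entered by a non-$M$ edge and must depart along its matching $M$-edge, which re-enters $D_X$ at the root of that vertex's own component. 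Leaving vertices of $X$ only along $M$-edges also shows $P$ never uses an edge into $C_X$ (no edge joins $D_X$ and $C_X$, and edges between $X$ and $C_X$ are not allowed), recovering the content of Proposition~\ref{prop:x2xd}. Since the $M$-edge of $\delta(C)$ is unique and $P$ is a path, no component can be entered twice; hence $P' = P/K_1/\cdots/K_l$ meets each $w_{K_i}$ at most once and is a genuine path. Its edges are exactly the crossings used by $P$, with $M$-crossings becoming $M'$-edges and non-$M$ crossings becoming non-$M'$ edges, so $P'$ is $M'$-balanced (resp.\ $M'$-saturated) with the claimed endpoints.

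For direction~(ii), I would reverse the construction by expanding each contracted vertex of $P'$ into an alternating path through its component. If a component $C$ is met internally by $P'$, its incident $M'$-edge corresponds in $G$ to the $M$-crossing landing at $r_C$, while its incident non-$M'$ edge corresponds to some non-$M$ edge of $G$ landing at a vertex $t\in V(C)$; since $C$ is factor-critical (Proposition~\ref{prop:odd-maximal}) and $M_C$ is a near-perfect matching exposing $r_C$, Proposition~\ref{prop:path2root} furnishes an $M$-balanced path inside $C$ from $t$ to $r_C$, which splices in with the correct parities. The component containing $u$ is handled identically, and in the saturated case the terminal component $K$ is expanded by a balanced path from $r_K$ to the prescribed vertex $w$, again supplied by Proposition~\ref{prop:path2root}. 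Because distinct components are vertex-disjoint and the vertices of $X$ along $P'$ are distinct, the resulting $P$ is a simple $M$-balanced (resp.\ $M$-saturated) path with $P/K_1/\cdots/K_l = P'$.

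The main obstacle is the simplicity of $P'$ in direction~(i)---that is, showing $P$ meets each odd component only once. A priori a simple path of $G$ could wind through a component along two vertex-disjoint segments and thereby project to a walk revisiting $w_{K_i}$; ruling this out is exactly the forced-alternation claim, whose crux is that no vertex of $X$ is ever reached along an $M$-edge, so that a component can be entered only through its single $M$-crossing. Everything else---the parity bookkeeping, the avoidance of $C_X$, and the expansion in direction~(ii)---is routine given Propositions~\ref{prop:barrier} and \ref{prop:path2root}.
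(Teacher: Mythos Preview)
Your proof is correct and takes essentially the same approach as the paper: for direction~(i) the paper argues by induction on $|E(P)|/2$, decomposing $P$ at its second $X$-vertex and invoking the key observation (its Claim~\ref{claim:distinct}) that distinct $X$-vertices are $M$-matched into distinct odd components, which is exactly your forced-alternation claim rephrased; for direction~(ii) the paper's expansion argument via Propositions~\ref{prop:odd-maximal} and~\ref{prop:path2root} is identical to yours. The only cosmetic difference is that the paper writes out a formal induction whereas you trace $P$ directly.
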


\begin{proof}
For \ref{item:projection},
 we first prove the case where $P$ is an $M$-balanced path.  
Let $u = x_1, \ldots, x_{l'} = v$ be the vertices of $X\cap V(P)$, 
and suppose, without loss of generality, they appear in this order 
if we trace $P$ from $u$.  
For each $i = 1, \ldots, l'$, 
let $L_i\in \mathcal{K}\if0 \{K_i\}_{i=1}^l \fi$ be such that 
$x_i'\in V(L_i)$, 
which is well-defined by Proposition~\ref{prop:barrier}, and  
let $z_i$ be the contracted vertex of $\hxg{X}{G}$ 
corresponding to $L_i$. 
Note that by Proposition~\ref{prop:barrier}, 
\begin{cclaim}\label{claim:distinct}
if $x_i\neq x_j$, then $L_i\neq L_j$, accordingly, $z_i\neq z_j$. 
\end{cclaim}
%
We are going to prove a bit refined statement, 
\begin{quotation}
\noindent
$P'$ is an $M'$-balanced path from $u$ to $v$, with
$V(P') = \{x_i\}_{i=1}^{l'} \cup \{z_i\}_{i=1}^{l'}\setminus \{z_{l'}\}$,  
\end{quotation}
by induction on $k$, where $|E(P)|=: 2k$. 

If $k=0$, then the statement is obviously true. 
Let $k > 0$ and suppose the claim is true for $0, \ldots, k-1$. 
By the definitions, 
the internal vertices of $uPx_2$ are contained in $L_1$. 
By Proposition~\ref{prop:barrier}, 
$\delta(L_i)\cap M = \{uu'\}$. 
Thus, if we trace $uPx_2$ from $u$ then the last edge is not in $M$, 
which means $uPx_2$ is an $M$-balanced path from $u$ to $x_2$. 
Accordingly,  $x_2Pv$ is also an $M$-balanced path, from $x_2$ to $v$.

Note that $P_1':= uPx_2/K_1/\ldots/K_l$ is apparently an $M$-balanced path, 
since $E(P_1') = \{uz_1, z_1x_2\}$. 
Therefore, if $x_2 = v$ then the claim follows. 
Hence hereafter we prove the case where $x_2\neq v$. 
By the induction hypothesis, 
$P_2' := x_2Pv/K_1/\ldots/K_l$ is an $M'$-balanced paths of $\hxg{X}{G}$, 
whose vertices are $\{ x_2,\ldots, x_{l'}= v\} \cup \{z_2,\ldots, z_{l'-1}\}$. 
Thus,  $V(uPx_2)\cap V(x_2Pv) = \{x_2\}$ by Claim~\ref{claim:distinct};  
accordingly, $P' = P_1' + P_2'$ is an $M'$-balanced path of $\hxg{X}{G}$
from $u$ to $v$ with $V(P') = \{x_i\}_{i=1}^{l'}\cup \{z_i\}_{i=1}^{l'} \setminus \{z_{l'}\}$. 
The other case where $P$ is an $M$-saturated path 
can be proved by similar arguments.

For \ref{item:expansion}, 
we first prove the case where $P'$ is an $M'$-balanced path of $\hxg{X}{G}$. 
Since it is apparently true if $u = v$, 
we prove the case where $u \neq v$. 
Let $u = x_0, y_0,  \ldots, x_p, y_p, x_{p+1} = v$ be the vertices of $P'$,
and suppose they appear in this order if we trace $P'$ from $u$. 
Note that $x_i\in X$ for each $i = 0,\ldots, p+1$  
 and that $y_i$ be a contracted vertex 
corresponding to an odd component of $G-X$, say $L_i$,
 for each $0,\ldots ,p$.   
For each $i=0,\ldots, p$, 
let $y_i^1, y_i^2 \in V(L_i)$ be such that 
$G$ has edges $x_iy_i^1,  y_i^2x_{i+1} \in E(G)$ 
that correspond to  $x_iy_i, y_ix_{i+1} \in E(\hxg{X}{G})$, respectively. 
Since $x_iy_i\in M'$ and $y_ix_{i+1}\not\in M'$, 
it follows that $x_iy_i^1\in M$ and $y_i^2x_{i+1}\in M$. 
\if0
For each $i=0,\ldots, p$, 
let $x_iy_i^1\in E(G)$ be such that corresponds to $x_iy_i\in E(P')$
and $y_i^2x_{i+1}\in E(G)$ to  $y_ix_{i+1}\in E(P')$.  
Note that $y_i^1, y_i^2 \in V(L_i)$. 
Then, since $x_iy_i\in M'$ and $y_ix_{i+1}\not\in M'$, 
$x_iy_i^1\in M$ and $y_i^2x_{i+1}\in M$. 
\fi
%
%
%
%
The odd component $L_i$ is factor-critical by Proposition~\ref{prop:odd-maximal}, 
and $M_{L_i}$ forms a near-perfect matching of $L_i$, 
which exposes $y_i^1$, by Proposition~\ref{prop:barrier}. 
Therefore, there is an $M$-balanced path $Q_i$ 
from $y_i^2$ to $y_i^1$ which is  contained in $L_i$,
 by Proposition~\ref{prop:path2root}. 
\if0
Since $L_i$ is factor-critical by Proposition~\ref{prop:odd-maximal}, 
and $M_{L_i}$ forms a near-perfect matching of $L_i$, 
which exposes $y_i^1$,   by Proposition~\ref{prop:barrier},  
 \if0 for each $i = 1,\ldots , p$, \fi 
there is an $M$-balanced path $Q_i$ 
from $y_i^2$ to $y_i^1$ which is  contained in $L_i$,
 by Proposition~\ref{prop:path2root}. 
\fi
Thus, replacing each $y_i$ by $Q_i$ on $P'$,
we can get an $M$-balanced path from $u$ to $v$ in $G$. 
The other case where $P'$ is an $M'$-saturated path 
can be proved by similar arguments. 
\qed
\end{proof}  

%
Given a factorizable graph $G$ 
 and an odd-maximal barrier $X$, 
we denote the DM-decomposition of $\hxg{X}{G}$ with respect to $X$ 
as just the DM-decomposition of $\hxg{X}{G}$. 
In this case, we sometimes denote $\dm_X$ as just $\dm$, 
omitting the subscript ``$X$''.  

\begin{definition}
Let $G$ be a factorizable graph, and $X$ be an odd-maximal barrier of $G$. 
Let $D$ be a DM-component of $\hxg{X}{G}$, 
whose vertices in $V(D)\setminus X$  are 
the contracted vertices 
resulting from some odd components of $G-X$, say $K_1,\ldots, K_l$, where $l \le |X|$.
We say $\what{D}$ is the \textit{expansion} of $D$ 
if it is the subgraph of $G$ induced by 
$\left( V(D)\cap X \right) \cup \bigcup_{i=1}^l V(K_i)$. 
\end{definition}

The next proposition is a basic observation on expansions.  
%
\begin{proposition}\label{prop:dm2partition}
Let $G$ be a factorizable graph, and $X$ be an odd-maximal barrier of $G$. 
Let $D_1,\ldots, D_k$ be the DM-components of $\hxg{X}{G}$. 
For each $i = 1,\ldots, k$,  
let $\what{D}_i$ be the expansion of $D_i$.  
Then, 
\renewcommand{\labelenumi}{\theenumi}
\renewcommand{\labelenumi}{{\rm \theenumi}}
\renewcommand{\theenumi}{(\roman{enumi})}
\begin{enumerate}
\item \label{item:partition}
$\{V(\what{D}_i)\}_{i=1}^k$ forms a partition of $X\cup D_X$, 
\item \label{item:separating}
$V(\what{D}_i)$ is separating, accordingly $\what{D}_i$ is factorizable, 
\item \label{item:barrier}
$X\cap V(\what{D}_i)$ is an odd-maximal barrier of $\what{D}_i$, and 
\item \label{item:isomorphic}
$\hxg{X\cap V(\what{D}_i)}{\what{D}_i}$ is isomorphic to $D_i$, 
for each $i = 1,\ldots, k$. 
\end{enumerate} 
\end{proposition}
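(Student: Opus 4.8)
The plan is to transport all four assertions from the bipartite \matchablesp graph $\hxg{X}{G}$ back to $G$ through the expansion operation, exploiting the fact that a DM-component, being a factor-component of $\hxg{X}{G}$, is separating and hence balanced between the two colour classes. The bridge between the two graphs is Proposition~\ref{prop:projection}: the perfect matchings of $\hxg{X}{G}$ are exactly the traces $M\cap\delta(X)$ of perfect matchings $M$ of $G$. Throughout I write $X_i := X\cap V(\what{D}_i)$, let $w_{K_j}$ denote the contracted vertex of an odd component $K_j$ of $G-X$, and recall that $\hxg{X}{G}$ is bipartite with colour classes $X$ and $\{w_{K_j}\}_{j=1}^{|X|}$, the second equality in cardinality coming from odd-maximality of $X$.

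Assertions~(\ref{item:partition}) and~(\ref{item:isomorphic}) are essentially bookkeeping. Since $D_1,\dots,D_k$ partition $V(\hxg{X}{G}) = X \dot{\cup} \{w_{K_j}\}_j$, and since the $K_j$ are vertex-disjoint with $\bigcup_j V(K_j) = D_X$, replacing each $w_{K_j}\in V(D_i)$ by $V(K_j)$ turns this partition into a partition of $X\dot{\cup} D_X$, which is~(\ref{item:partition}). For~(\ref{item:isomorphic}) I would check that the identity on $V(D_i)$ is an isomorphism: forming $\hxg{X_i}{\what{D}_i}$ deletes the edges inside $X_i$ and re-contracts each $K_j\subseteq\what{D}_i$, so its vertex set is again $X_i\cup\{w_{K_j}: w_{K_j}\in V(D_i)\}$ and its edges are precisely the pairs $x\,w_{K_j}$ admitting a $G$-edge from $x$ to $K_j$, which is exactly the description of the induced subgraph $D_i = \hxg{X}{G}[V(D_i)]$.

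The substantive step is the separating property~(\ref{item:separating}), which I would verify through criterion~(iii) of Proposition~\ref{prop:separating}: for every perfect matching $M$ of $G$ I would show $\delta(V(\what{D}_i))\cap M = \emptyset$. By Proposition~\ref{prop:barrier} every edge of $M$ is of exactly one of three kinds, namely an edge inside $G[C_X]$ (disjoint from $V(\what{D}_i)$), an edge inside some $K_j$ (wholly inside or wholly outside $\what{D}_i$), or a crossing edge of $M\cap\delta(X)$. By Proposition~\ref{prop:projection} the crossing edges form a perfect matching $M'$ of $\hxg{X}{G}$; since $D_i$ is a factor-component of $\hxg{X}{G}$ it is separating there, so no edge of $M'$ joins $V(D_i)$ to its complement. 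Lifting each such edge $x\,w_{K_j}$ back to the corresponding $G$-edge from $x$ to $K_j$ shows that no crossing edge of $M$ leaves $V(\what{D}_i)$ either. Hence $\delta(V(\what{D}_i))\cap M=\emptyset$, so $V(\what{D}_i)$ is separating and, by criterion~(iv) of Proposition~\ref{prop:separating}, $\what{D}_i$ is factorizable.

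Finally, for~(\ref{item:barrier}) I would observe that $\what{D}_i - X_i$ is precisely the disjoint union of the $K_j$ with $w_{K_j}\in V(D_i)$; each such $K_j$ is factor-critical by Proposition~\ref{prop:odd-maximal} applied to $G$, hence odd, so $\what{D}_i - X_i$ has no even component and $q_{\what{D}_i}(X_i) = |V(D_i)\setminus X|$. Because $D_i$ is a factor-component of the \matchablesp bipartite graph $\hxg{X}{G}$ it carries a perfect matching of its own, forcing its colour classes to balance: $|V(D_i)\cap X| = |V(D_i)\setminus X|$, i.e.\ $q_{\what{D}_i}(X_i) = |X_i|$. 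As $\what{D}_i$ is factorizable, this is exactly the barrier condition of the Berge formula, and the factor-criticality of the $K_j$ then yields odd-maximality through Proposition~\ref{prop:odd-maximal} applied to $\what{D}_i$. The one place demanding care is the separating step~(\ref{item:separating}): everything else is a clean translation, whereas there one must keep the three edge-types of $M$ apart and confirm that the separating property established inside the contracted graph really lifts edge-by-edge through the expansion, which is where Propositions~\ref{prop:barrier} and~\ref{prop:projection} do the real work.
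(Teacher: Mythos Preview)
Your proposal is correct and follows essentially the same line as the paper's own proof: both derive \ref{item:partition} and \ref{item:isomorphic} as bookkeeping from the definitions, obtain \ref{item:separating} by pushing a perfect matching of $G$ down to $\hxg{X}{G}$ via Proposition~\ref{prop:projection} and invoking that a factor-component is separating there, and deduce \ref{item:barrier} from the balance of the two colour classes of $D_i$ together with Proposition~\ref{prop:odd-maximal}. Your treatment of \ref{item:separating} is in fact more explicit than the paper's (you classify the three edge-types of $M$ rather than arguing by contradiction), but the underlying mechanism is identical.
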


\begin{proof}
Since the DM-components of $\hxg{X}{G}$ 
give the partition of the vertices of it, 
\ref{item:partition} apparently follows from the definitions. 
For the first half of \ref{item:separating}, 
suppose that $V(\what{D}_i)$ is not separating, equivalently 
by Proposition~\ref{prop:separating}, 
that there is a perfect matching $M$ of $G$ such that 
$\delta(\what{D}_i)\cap M \neq \emptyset$. 
Then, by Proposition~\ref{prop:projection}, 
$M' := M\cap \delta(X)$ 
forms a perfect matching of $\hxg{X}{G}$  satisfying 
$\delta(D_i)\cap M' \neq \emptyset$, 
a contradiction. 
Therefore,  $V(\what{D}_i)$ is separating;  
accordingly,  $\what{D}_i$ is factorizable, 
 and we are done for \ref{item:separating}.  
 
By the definition, 
$\what{D}_i\setminus X$ is composed of $|X\cap V(\what{D}_i)|$ number of 
odd-components, each of which is factor-critical by Proposition~\ref{prop:odd-maximal}. 
Therefore,  
$V(\what{D}_i)\cap X$ is an odd-maximal barrier of $\what{D}_i$ 
by the statement \ref{item:separating} and Proposition~\ref{prop:odd-maximal} again. 
Thus, we are done for \ref{item:barrier}. 
The statement \ref{item:isomorphic} is apparent from the definitions.   
\qed
\end{proof}

\begin{theorem}\label{thm:path2dm}
Let $G$ be a factorizable graph, $X$ be an odd-maximal barrier, 
and $M$ be a perfect matching of $G$.  
Let $u, v\in X$, and $w \in D_X$, and for each $\alpha = u, v, w$ let  
$D_{\alpha}$ be the DM-component of $\hxg{X}{G}$ 
whose expansion $\what{D}_\alpha$ contains $\alpha$. 
Then, 
there is an $M$-balanced path from $u$ to $v$ 
(resp.  an $M$-saturated path from $u$ to $w$) in $G$ 
if and only if 
$D_u \dm D_v$ (resp.  $D_u \dm D_w$). 
\end{theorem}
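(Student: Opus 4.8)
The plan is to prove Theorem~\ref{thm:path2dm} by composing two correspondences that are already available: the transfer of alternating paths between $G$ and its associated bipartite graph $\hxg{X}{G}$ (Proposition~\ref{prop:pathprojection}), and the characterization of the DM-order by alternating paths in a bipartite factorizable graph (Proposition~\ref{prop:dm}). The key observation is that $\hxg{X}{G}$ is a bipartite factorizable graph whose two color classes are $X$ and the set of contracted vertices $\{w_{K_1},\ldots,w_{K_l}\}$ arising from the odd components of $G-X$; its factorizability is supplied by Proposition~\ref{prop:projection}, which produces the perfect matching $M' = M\cap\delta(X)$ of $\hxg{X}{G}$. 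Taking $A := X$ as the reference color class, the DM-decomposition of $\hxg{X}{G}$ with respect to $X$ is precisely the one whose order $\dm$ (that is, $\pardm{X}$) appears in the statement.

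First I would fix the identifications of the DM-components $D_\alpha$. For $u\in X$ the vertex $u$ lies in $A$, and since expansions retain all vertices of $V(D)\cap X$, the component $D_u$ whose expansion contains $u$ is simply the DM-component of $\hxg{X}{G}$ containing the vertex $u$; likewise for $v$. For $w\in D_X$, letting $K$ be the odd component of $G-X$ with $w\in V(K)$ and $w_K$ its contracted vertex in the class $B$, the expansion $\what{D}_w$ contains $w$ exactly when $w_K\in V(D_w)$, so $D_w$ is the DM-component containing $w_K$. With these dictionary entries the hypotheses of Proposition~\ref{prop:dm} are met, with $u,v\in A$ and $w_K\in B$.

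For the balanced case I would chain equivalences: an $M$-balanced path from $u$ to $v$ exists in $G$ iff (by both directions of Proposition~\ref{prop:pathprojection}) an $M'$-balanced path from $u$ to $v$ exists in $\hxg{X}{G}$ iff (by Proposition~\ref{prop:dm}, applied to $u,v\in A$) $D_u\dm D_v$. For the saturated case the same chain runs with $w$ and $w_K$: an $M$-saturated path from $u$ to $w$ exists in $G$ iff an $M'$-saturated path between $u$ and $w_K$ exists in $\hxg{X}{G}$ iff (by the saturated clause of Proposition~\ref{prop:dm}, applied to $u\in A$ and $w_K\in B$) $D_u\dm D_w$. Both equivalences then follow at once.

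The argument is essentially a bookkeeping composition, so there is no deep obstacle; the step requiring the most care is verifying the color-class and factorizability setup, namely that $\hxg{X}{G}$ really is bipartite (no edges survive inside $X$ after deleting $E(G[X])$, and distinct odd components of $G-X$ carry no edges between them, so after contraction every edge runs between $X$ and a contracted vertex) and that choosing the reference class $A=X$ makes the DM-order $\pardm{X}$ coincide with the $\dm$ of the statement. Once this dictionary is fixed, the only point where an orientation error could creep in is matching the direction ``from $u$ to $v$'' with $D_u\dm D_v$ rather than its reverse, which is settled by reading off the conventions of Propositions~\ref{prop:pathprojection} and \ref{prop:dm} directly.
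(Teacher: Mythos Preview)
Your proposal is correct and matches the paper's own argument: the paper's proof simply notes that the expansions $\what{D}_\alpha$ are well-defined by Proposition~\ref{prop:dm2partition} and then says the claim is immediate from Propositions~\ref{prop:dm} and~\ref{prop:pathprojection}. Your write-up just spells out the dictionary between vertices of $G$ and vertices of $\hxg{X}{G}$ and the chain of equivalences that the paper leaves implicit, so the approaches are essentially identical.
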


\begin{proof}
First note that $\what{D}_\alpha$ is well-defined by Proposition~\ref{prop:dm2partition}. 
Now the claim is immediate from 
Proposition~\ref{prop:dm} and Proposition~\ref{prop:pathprojection}.
\qed
\end{proof}

%
\begin{lemma}\label{lem:bip-traverse}
Let $G = (A, B; E)$ be a bipartite factorizable graph, 
$M$ be a perfect matching of $G$,  
and $D_1, D_2$ be DM-components of $G$ with $D_1\pardm{A} D_2$. 
Then, for any $u \in V(D_1)\cap A$ and $v\in V(D_2)\cap B$, 
any $M$-saturated path $P$ between $u$ and $v$ 
traverses $A\cap V(D_2)$. 
\end{lemma}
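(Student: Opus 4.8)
The plan is to exploit the fact that $v$ is an \emph{endpoint} of the $M$-saturated path $P$, and to locate the unique neighbour of $v$ on $P$ inside $A\cap V(D_2)$. Since the lemma asks only that $P$ meet $A\cap V(D_2)$, it suffices to examine the behaviour of $P$ locally at $v$; no global use of the poset $(\mathcal{G}(G),\pardm{A})$ will be needed beyond what guarantees that such a path can exist at all. In other words, the content is entirely a statement about the matching edge incident to $v$.

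Concretely, I would proceed in three steps. First, because $P$ is $M$-saturated, $M\cap E(P)$ is a perfect matching of the graph $P$, so every vertex of $P$—in particular the endpoint $v$—is covered by $M\cap E(P)$; as $v$ has degree one in $P$, its unique incident edge on $P$ must be this matching edge, whence the $M$-partner $v'$ of $v$ lies on $P$. Second, $D_2$ is a factor-component of $G$ and hence a separating set, so by Proposition~\ref{prop:separating} we have $\delta(V(D_2))\cap M=\emptyset$; since $vv'\in M$ and $v\in V(D_2)$, the edge $vv'$ cannot belong to $\delta(V(D_2))$, and therefore $v'\in V(D_2)$. Third, as $G$ is bipartite and $v\in B$, its neighbour $v'$ lies in $A$. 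Combining these, $v'\in A\cap V(D_2)$ and $v'\in V(P)$, so $P$ traverses $A\cap V(D_2)$, as claimed.

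The argument is short, and its only delicate point is the bookkeeping that pins $v'$ down simultaneously in $A$ (bipartiteness), in $V(D_2)$ (the separating property of factor-components), and on $P$ (the $M$-saturated condition read at the endpoint). I expect the hypothesis $D_1\pardm{A}D_2$ to play no active role in this direction: it serves only to ensure, via Proposition~\ref{prop:dm}, that $M$-saturated paths between such $u$ and $v$ exist, so that the universally quantified statement is not vacuous; were it dropped, the claim would still hold vacuously whenever no such $P$ exists. Finally, I would double-check the degenerate situations $D_1=D_2$ (where $u$ itself already lies in $A\cap V(D_2)$) and the case where $P$ is the single matching edge $uv$, to confirm that \emph{traverses} is to be read as \emph{contains a vertex of}, which these boundary cases show is the only tenable reading.
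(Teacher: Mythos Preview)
Your proof is correct and matches the paper's own argument exactly: the paper simply notes that $vv'\in E(P)$ and $v'\in V(D_2)\cap A$, which is precisely your three-step observation about the matching edge at the endpoint $v$. Your additional commentary on the role of the hypothesis $D_1\pardm{A}D_2$ and the degenerate cases is accurate but more than the paper itself spells out.
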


\begin{proof}
Apparently $vv'\in E(P)$ and $v'\in V(D_2)\cap A$;   
therefore, the claim follows. 
\qed
\end{proof}

The following lemma is obtained by 
Propositions~\ref{prop:pathprojection} and \ref{prop:dm2partition}, 
and Theorem~\ref{thm:path2dm}. 
%
\begin{lemma}\label{lem:traverse}
Let $G$ be a factorizable graph, $X$ be an odd-maximal barrier, 
and $M$ be a perfect matching of $G$.  
Let $\what{D}_1$ and $\what{D}_2$ be the subgraphs of $G$ 
which are respectively the expansions of 
 DM-components $D_1$ and $D_2$ such that $D_1\dm D_2$. 
Then, for any $u\in X\cap V(\what{D}_1)$ and $w\in V(\what{D}_2)\setminus X$, 
any $M$-saturated path $P$ between $u$ and $w$ 
traverses $X\cap V(\what{D}_2)$. 
\end{lemma}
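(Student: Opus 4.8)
The plan is to reduce the statement to its bipartite analogue, Lemma~\ref{lem:bip-traverse}, through the projection/expansion correspondence. First I would fix the set-up: regard $\hxg{X}{G}$ as a bipartite graph whose color classes are $A := X$ and the set $B$ of contracted vertices arising from the odd components $K_1,\ldots,K_l$ of $G-X$, so that the DM-decomposition with respect to $X$ is exactly the one named in the statement and the hypothesis $D_1\dm D_2$ reads as $D_1\pardm{A} D_2$. Since $w\in V(\what{D}_2)\setminus X$, the definition of the expansion together with Proposition~\ref{prop:dm2partition} shows that $w$ lies in a unique odd component $K$ of $G-X$ whose contracted vertex $w_K$ belongs to $B\cap V(D_2)$; likewise $u\in X\cap V(\what{D}_1)=A\cap V(D_1)$. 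I would also set $M':=M\cap\delta(X)$, which is a perfect matching of $\hxg{X}{G}$ by Proposition~\ref{prop:projection}.

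Next I would project the given path. By Proposition~\ref{prop:pathprojection}(i), contracting the odd components carries the $M$-saturated path $P$ between $u$ and $w$ to an $M'$-saturated path $P':=P/K_1/\cdots/K_l$ of $\hxg{X}{G}$ between $u$ and $w_K$. Applying Lemma~\ref{lem:bip-traverse} to the bipartite graph $\hxg{X}{G}$, the matching $M'$, and the components $D_1\pardm{A} D_2$, with $u\in A\cap V(D_1)$ and $w_K\in B\cap V(D_2)$, then yields that $P'$ traverses $A\cap V(D_2)$, i.e.\ it contains some vertex $x\in X\cap V(D_2)$.

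Finally I would lift the conclusion back to $G$. The projection $P\mapsto P'$ only contracts vertices lying inside $K_1,\ldots,K_l$ and leaves every vertex of $X$ untouched, so the vertex $x\in X$ detected on $P'$ is already a vertex of $P$. Because $K_i\subseteq D_X$ is disjoint from $X$, the definition of the expansion gives $X\cap V(\what{D}_2)=X\cap V(D_2)=A\cap V(D_2)$, whence $P$ traverses $X\cap V(\what{D}_2)$, as required.

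I do not anticipate a genuine obstacle: the real content is already carried by the bipartite Lemma~\ref{lem:bip-traverse} and by the reachability transfer of Proposition~\ref{prop:pathprojection}. The only points demanding care are bookkeeping ones---identifying $X\cap V(\what{D}_2)$ with the $A$-side vertices $A\cap V(D_2)$ of the DM-component, checking that $w_K$ genuinely sits on the $B$-side of $V(D_2)$, and observing that ``traverses'' is preserved under the projection because $X$ is fixed vertexwise. Theorem~\ref{thm:path2dm} underlies the consistency of these identifications, since it is what ties $M$-saturated reachability in $G$ to the DM-order on $\hxg{X}{G}$, and I would invoke it to confirm that $D_1\dm D_2$ is the correct hypothesis to feed into the bipartite lemma.
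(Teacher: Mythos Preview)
Your proposal is correct and follows essentially the same route as the paper: project $P$ to $P':=P/K_1/\cdots/K_l$ via Proposition~\ref{prop:pathprojection}, apply Lemma~\ref{lem:bip-traverse} in $\hxg{X}{G}$ to obtain a vertex of $X\cap V(D_2)$ on $P'$, and observe that vertices of $X$ survive the contraction so that $P$ meets $X\cap V(\what{D}_2)$. Your write-up is slightly more verbose with the bookkeeping (identifying $w_K$, the color classes, and $X\cap V(\what{D}_2)=X\cap V(D_2)$), and the final appeal to Theorem~\ref{thm:path2dm} is not actually needed since $\dm$ is by definition $\pardm{X}$, but none of this is a gap.
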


\begin{proof}
Let $K_1,\ldots, K_l$, where $l = |X|$,  be the odd components of $G-X$. 
By Proposition~\ref{prop:pathprojection}, 
$P' := P/K_1/\cdots/K_l$ is an $M'$-saturated path, 
where $M' = M\cap \delta(X)$, 
whose end vertices are respectively in $X\cap V(D_1)$ and $V(D_2)\setminus X$. 
Therefore, $P'$ traverses $X\cap V(D_2)$ by Lemma~\ref{lem:bip-traverse}, 
which means $P$ traverses $X\cap V(\what{D}_2)$. 
\qed
\end{proof}

\subsection{Canonical Structures of  Odd-maximal Barriers}\label{sec:barrier2up}
In this subsection we examine the reachability of alternating paths 
regarding the cathedral structure and derive the main theorem. 
%
%
%
\begin{proposition}[implicitly stated in \cite{kita2012b, kita2012a}]%
\label{prop:base}
Let $G$ be a factorizable graph,  and let $H\in\mathcal{G}(G)$ 
and $S\in \pargpart{G}{H}$. 
Then, $G[\vupstar{S}]/S$ is factor-critical. 
\end{proposition}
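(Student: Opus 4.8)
The plan is to realize $G[\vupstar{S}]/S$ as a recognizable induced subgraph of $G[\vupstar{H}]/H$, which is already known to be factor-critical by Proposition~\ref{prop:block}, and then to transport factor-criticality from the larger graph to the smaller one through a decomposition into pieces that pairwise share a single vertex. Throughout I would write $\Lambda := G[\vupstar{H}]/H$ and let $h$ denote the vertex obtained by contracting $H$, and $s$ the vertex obtained by contracting $S$.

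First I would analyze the structure of $\Lambda$. Since $\Lambda - h = G[\vup{H}]$, the connected components of $\Lambda - h$ are exactly the connected components $K_1,\dots,K_m$ of $G[\vup{H}]$; because $\Lambda$ is connected (being factor-critical) the vertex $h$ is adjacent to every $K_j$, so $\Lambda$ is the union of the ``petal graphs'' $\Lambda_j := \Lambda[\{h\}\cup V(K_j)]$, any two of which meet only in $h$. By Theorem~\ref{thm:cor} each $K_j$ satisfies $\Gamma(K_j)\cap V(H)\subseteq S_j$ for a unique $S_j\in\pargpart{G}{H}$, and by the definition of $\vup{S}$ the set $\vup{S}$ is precisely the union of those $V(K_j)$ with $S_j = S$. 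The key structural observation I would then verify is that every edge joining such a $K_j$ to $V(H)$ already has its $H$-end in $S$; hence contracting only $S$ instead of all of $H$ neither creates nor destroys edges incident with these petals, and $G[\vupstar{S}]/S$ is isomorphic (via $s\mapsto h$, identity on the petals) to the induced subgraph $\Lambda[\{h\}\cup\vup{S}]$, i.e.\ to the union of the petal graphs $\Lambda_j$ with $S_j = S$, again sharing only the vertex $h$.

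The remaining work is a pair of elementary facts about graphs glued along one common vertex $v$: (A) a union of factor-critical graphs pairwise meeting only in $v$ is factor-critical, and (B) conversely, if such a union is factor-critical then each piece is. Both follow from short parity arguments. Deleting $v$ shows every piece $G_i$ has $G_i - v$ factorizable, hence $|V(G_i)|$ odd. For (A) one assembles a perfect matching of any vertex-deletion from a perfect matching of the affected piece together with near-perfect matchings of the others (matched so that $v$ is left for the affected piece). For (B), given $x$ in one piece $\Lambda_j$ and a perfect matching of $\Lambda - x$, the edge at $v$ cannot lie in any other piece, since the remaining odd-order vertices of that other piece could not then be perfectly matched; so $v$ is matched inside $\Lambda_j$ and the matching restricts to a perfect matching of $\Lambda_j - x$. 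Applying (B) to $\Lambda$ gives that each petal graph $\Lambda_j$ is factor-critical, and applying (A) to the subfamily $\{\Lambda_j : S_j = S\}$ gives that $G[\vupstar{S}]/S$ is factor-critical (the degenerate case $\vup{S}=\emptyset$, where $G[\vupstar{S}]/S$ is a single vertex, being factor-critical by convention).

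I expect the main obstacle to be the structural identification of the second paragraph rather than the matching arithmetic of the third: one must argue carefully that $\Lambda - h$ breaks into exactly the components $K_j$, that each petal is genuinely attached to $h$, and, above all, that Theorem~\ref{thm:cor} forces the petals assigned to $S$ to send no edge to $V(H)\setminus S$, so that passing from the contraction of $H$ to the contraction of $S$ leaves the adjacencies among $\{s\}\cup\vup{S}$ unchanged. Once this isomorphism is secured, the two gluing facts (A) and (B) close the argument.
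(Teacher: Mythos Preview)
Your argument is correct. The identification of $G[\vupstar{S}]/S$ with the induced subgraph $\Lambda[\{h\}\cup\vup{S}]$ of $\Lambda=G[\vupstar{H}]/H$ is sound---Theorem~\ref{thm:cor} supplies exactly the containment $\Gamma(K_j)\cap V(H)\subseteq S$ needed to see that contracting only $S$ rather than all of $H$ changes nothing on these petals---and the gluing facts (A) and (B) are proved correctly.

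The paper's own proof is a single sentence: ``This is a mere restatement of Proposition~\ref{prop:block}.'' The intended reading is that $G[\vupstar{S}]/S$ is (isomorphic to) a connected subgraph of $\Lambda$ whose blocks are among the blocks of $\Lambda$; since Proposition~\ref{prop:block} asserts that every block of $\Lambda$ is factor-critical, and since a connected graph is factor-critical if and only if each of its blocks is, the result follows. Thus the paper leans on the \emph{second} clause of Proposition~\ref{prop:block} (factor-criticality of blocks) together with the standard block characterization of factor-critical graphs, whereas you invoke only the \emph{first} clause (factor-criticality of $\Lambda$ itself) and replace the block machinery by the elementary gluing facts (A) and (B) at the coarser level of whole petals. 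Your route is longer but more self-contained; the paper's is terser but presupposes the block statement and its standard consequence.
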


\begin{proof}
This is a mere restatement of Proposition~\ref{prop:block}. 
\qed
\end{proof}
The next lemma is obtained by 
Proposition~\ref{prop:block} and Proposition~\ref{prop:path2root}. 
\begin{lemma}\label{lem:up-path}
Let $G$ be a factorizable graph and  $M$ be a perfect matching of $G$, 
and let $H\in\mathcal{G}(G)$ 
and $S\in \pargpart{G}{H}$. 
Then, 
for any $x\in \vupstar{S}$, 
there is an $M$-balanced path from $x$ to some vertex $y\in S$, 
whose vertices except $y$ are contained in $\vup{S}$. 
\end{lemma}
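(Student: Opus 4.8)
The plan is to reduce the lemma to Proposition~\ref{prop:path2root} applied to the factor-critical graph $G[\vupstar{S}]/S$ supplied by Proposition~\ref{prop:base} (equivalently Proposition~\ref{prop:block}). For this I first need to exhibit a near-perfect matching of $G[\vupstar{S}]/S$ that exposes the contracted vertex, call it $s$, and that is inherited from the restriction of $M$.

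First I would show that $\bar M := M\cap E(G[\vup{S}])$ is a perfect matching of $G[\vup{S}]$. Since $\vupstar{H}$ is a union of factor-components it is separating, so by Proposition~\ref{prop:separating} the restriction $M\cap E(G[\vupstar{H}])$ is a perfect matching of $G[\vupstar{H}]$; likewise $V(H)$ is separating, so $M\cap E(G[V(H)])$ is a perfect matching of $H$ and hence no edge of $M$ joins $V(H)$ and $\vup{H}$. Consequently $M\cap E(G[\vup{H}])$ is a perfect matching of $G[\vup{H}]$, which restricts to a perfect matching on each connected component $K$ of $G[\vup{H}]$. Recalling that, by the definition of $\parup{G}{S}$ together with Theorem~\ref{thm:cor}, $\vup{S}$ is exactly the union of those components $K$ with $\Gamma(K)\cap V(H)\subseteq S$, the set $\bar M$ is a perfect matching of $G[\vup{S}]$. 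Contracting $S$ to the single vertex $s$ leaves the edges of $\bar M$ undisturbed, so $\bar M$ is a near-perfect matching of $\bar G := G[\vupstar{S}]/S$ exposing exactly $s$.

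Since $\bar G$ is factor-critical by Proposition~\ref{prop:base}, Proposition~\ref{prop:path2root} provides, for every $x\in\vup{S}$, a $\bar M$-balanced path $\bar P$ from $x$ to $s$ in $\bar G$ (the case $x\in S$ being handled trivially by the one-vertex path). It then remains to expand $\bar P$ back into $G$. Because $\bar P$ is a simple path, $s$ occurs only as its terminal vertex, so exactly one edge of $\bar P$ is incident to $s$; this edge corresponds in $G$ to an edge $wy$ with $w\in\vup{S}$ and $y\in S$, while every other vertex and edge of $\bar P$ already lies inside $G[\vup{S}]$. Let $P$ denote the resulting path in $G$ from $x$ to $y$.

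The main point to verify is that $P$ is genuinely an $M$-balanced path with $y$ as its exposed end. Every edge of $P$ other than $wy$ lies in $G[\vup{S}]$, where $\bar M$ and $M$ coincide, so the matched/unmatched pattern is transcribed directly from $\bar P$. For the edge $wy$: since $V(H)$ is separating, the $M$-partner of $y$ lies in $V(H)$, whereas $w\in\vup{H}$ is disjoint from $V(H)$, whence $wy\notin M$, exactly matching the fact that the edge of $\bar P$ incident to the exposed vertex $s$ is unmatched. Thus $M\cap E(P)$ is a near-perfect matching of $P$ exposing only $y$, the path $P$ has an even number of edges, and all of its vertices except $y$ lie in $\vup{S}$, as required. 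I expect this last verification that $wy\notin M$---that contracting $S$ does not corrupt the alternation at the terminal end of the path---to be the only delicate step; everything else is a straightforward transcription between $\bar G$ and $G$.
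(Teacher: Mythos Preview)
Your argument is correct and follows essentially the same approach as the paper: contract $S$, observe that $M_{\vup{S}}$ is a near-perfect matching of $G[\vupstar{S}]/S$ exposing the contracted vertex, invoke Proposition~\ref{prop:base} and Proposition~\ref{prop:path2root}, and pull the resulting balanced path back to $G$. The paper's proof is simply a terser version of yours; in particular, the justification that $M_{\vup{S}}$ is a perfect matching of $G[\vup{S}]$ and the verification that the terminal edge $wy\notin M$ are left implicit there, whereas you spell them out.
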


\begin{proof}
$M_{\vup{S}}$ forms a near-perfect matching of $G' := G[\vupstar{S}]/S$ 
exposing only the contracted vertex $s$ corresponding to $S$, 
and by Proposition~\ref{prop:base}, 
$G'$ is factor-critical. 
Therefore, by Proposition~\ref{prop:path2root} 
there is an $M_{\vup{S}}$-balanced path from any $x\in \vupstar{S}$ to $s$, 
which corresponds to a desired path in $G$. 
Thus, the claim follows. 
\qed
\end{proof}

Immediately by Theorem~\ref{thm:generalizedcanonicalpartition},  
we can see the next proposition:  
%
\begin{proposition}\label{prop:deletable2path}\label{prop:maximal}
Let $G$ be a factorizable graph and  $M$ be a perfect matching of $G$, 
and let $H\in\mathcal{G}(G)$. 
A set of vertices $S\subseteq V(H)$ 
is a member of $\pargpart{G}{H}$ if and only if 
it is a maximal subset of $V(H)$ satisfying that 
there is no $M$-saturated path between any two vertices of it. 
\end{proposition}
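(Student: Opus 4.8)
The plan is to reduce the stated characterization to two facts already available: that $\gsim{G}$ is an equivalence relation (Theorem~\ref{thm:generalizedcanonicalpartition}), and the path criterion of Proposition~\ref{prop:two-element-extreme}. First I would observe that, because any two vertices $u, v\in V(H)$ automatically lie in the same factor-component, the defining condition of $u\gsim{G} v$ collapses (for $u\neq v$) to the single requirement that $G-u-v$ be non-factorizable. By Proposition~\ref{prop:two-element-extreme}, this is in turn equivalent to the absence of an $M$-saturated path between $u$ and $v$. Hence, restricted to $V(H)$, the relation $\gsim{G}$ is exactly ``equality, or no $M$-saturated path between the two vertices.''

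With this translation in hand, the proposition becomes the general statement that the equivalence classes of an equivalence relation are precisely its maximal pairwise-related subsets. For the forward direction I would take $S\in\pargpart{G}{H}$; since $S$ is a $\gsim{G}$-class, every distinct pair in $S$ is related and therefore admits no $M$-saturated path, so $S$ has the desired property. For maximality, any $w\in V(H)\setminus S$ lies in a different class, so some (indeed every) $u\in S$ satisfies $u\not\gsim{G} w$, giving an $M$-saturated path between $u$ and $w$; thus $S\cup\{w\}$ violates the property, and $S$ is maximal.

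For the converse I would start from a maximal subset $S\subseteq V(H)$ with no $M$-saturated path between any two of its vertices. The path criterion turns this into: all distinct pairs of $S$ are $\gsim{G}$-related. Since $\gsim{G}$ is an equivalence relation, $S$ is contained in a single class $T$; and because every equivalence class is contained in one factor-component, $S\subseteq V(H)$ forces $T\subseteq V(H)$, i.e.\ $T\in\pargpart{G}{H}$. The forward direction shows $T$ itself has the no-saturated-path property, so maximality of $S$ yields $S=T\in\pargpart{G}{H}$.

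The argument is essentially bookkeeping, so there is no serious obstacle; the one point requiring care is the observation that the ``same factor-component'' clause in the definition of $\gsim{G}$ is vacuous inside $V(H)$, together with the symmetric fact that the class $T$ produced in the converse cannot leak outside $V(H)$. Both follow from the fact that each $\gsim{G}$-class lives in a single factor-component.
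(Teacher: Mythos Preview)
Your argument is correct and is exactly the approach the paper intends: the paper's own proof simply cites Theorem~\ref{thm:generalizedcanonicalpartition} and Proposition~\ref{prop:two-element-extreme} and leaves the details implicit, and you have filled in precisely those details. There is nothing to add.
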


\begin{proof}
This follows easily from Theorem~\ref{thm:generalizedcanonicalpartition} 
and Proposition~\ref{prop:two-element-extreme}. 
\qed
\end{proof}

The next one is by Proposition~\ref{prop:ear} and Lemma~\ref{lem:up-path}. 
\begin{lemma} \label{lem:nosaturate}
Let $G$ be a factorizable graph and $M$ be a perfect matching of $G$, 
and let $H\in \mathcal{G}(G)$ and $S\in \pargpart{G}{H}$.
Then, for any $s\in S$ and $x\in \vup{S}$, 
there is no $M$-saturated path between $s$ and $x$
nor $M$-balanced path from $s$ to $x$.
\end{lemma}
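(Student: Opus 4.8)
The plan is to argue by contradiction, after first collapsing the two cases into one. It suffices to rule out an $M$-balanced path from $s$ to an arbitrary vertex $z\in\vup{S}$ (exposing $z$). Indeed, a balanced path from $s$ to $x$ is exactly such a path with $z=x$, while an $M$-saturated path between $s$ and $x$, upon deleting its last edge $xx'$, becomes an $M$-balanced path from $s$ to $x'$; here $x'\in\vup{S}$, because $\vup{S}$ is a union of factor-components and hence separating (Proposition~\ref{prop:separating}), so $M$ matches $x$ inside $\vup{S}$.

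So suppose for contradiction that $P$ is an $M$-balanced path from $s$ to $z$ exposing $z$, with $s\in S$ and $z\in\vup{S}$. First I would single out the last vertex $t$ of $P$ (tracing from $s$) lying in $V(H)$. Since $s$ is matched by $M$ inside the factor-component $H$, its partner $s'$ lies in $V(H)$ and appears on $P$ after $s$, so $t\neq s$; and since $V(H)$ is separating, the edge of $P$ leaving $V(H)$ immediately after $t$ is not in $M$. Hence the initial segment $sPt$ begins and ends with $M$-edges, so it is an \emph{$M$-saturated} path between the distinct vertices $s,t\in V(H)$. The remaining segment $tPz$ has all of its vertices except $t$ outside $V(H)$, and both of its end-edges lie outside $M$, so it is an \emph{$M$-exposed} path from $t$ to $z$.

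Next I would close $tPz$ into an ear. By Lemma~\ref{lem:up-path} there is an $M$-balanced path $R$ from $z$ to some $y\in S$ all of whose vertices, apart from $y$, lie in $\vup{S}$ and so outside $V(H)$; moreover $R$ starts at $z$ with the $M$-edge $zz'$, which alternates with the non-$M$ end-edge of $tPz$ at $z$. Their concatenation is an $M$-alternating walk from $t$ to $y$ whose two end-edges are outside $M$ and whose intermediate vertices all avoid $V(H)$, and every such intermediate vertex is $M$-matched within the walk. (We may assume $y\neq t$, for otherwise $sPt$ is already an $M$-saturated path between two vertices of $S$ and we are finished at once.) Turning this walk into a simple path therefore yields an $M$-ear relative to $H$ with end vertices $t$ and $y$. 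Proposition~\ref{prop:ear}(ii) then gives $t\gsim{G}y$, so $t$ lies in the same canonical class as $y$, namely $S$. But $sPt$ is an $M$-saturated path between the distinct vertices $s,t\in S$, contradicting Proposition~\ref{prop:maximal} (equivalently, Proposition~\ref{prop:two-element-extreme} makes $G-s-t$ factorizable, i.e. $s\not\gsim{G}t$). This contradiction proves the lemma.

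The step I expect to be the main obstacle is passing from the concatenation $tPz + R$ to a \emph{genuine simple} $M$-ear: the two alternating pieces may meet in vertices other than $z$, so the walk need not be a path. I would resolve this with Proposition~\ref{prop:intersection_path}, which guarantees that the overlaps of two $M$-alternating paths are $M$-saturated subpaths; rerouting along these overlaps eliminates the repetitions while preserving both the alternation and the property that every interior vertex remains $M$-matched, so the extracted simple path is still an $M$-ear with end vertices $t$ and $y$. Verifying that this rerouting keeps the interior $M$-saturated (so that Proposition~\ref{prop:ear}(ii) applies) is the technical heart of the argument.
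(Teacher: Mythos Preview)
Your overall plan matches the paper's: manufacture an $M$-ear relative to $H$ by splicing a piece of $P$ onto a path furnished by Lemma~\ref{lem:up-path}, and then invoke Proposition~\ref{prop:ear}(ii). The reduction to a single $M$-balanced path is fine, and your identification of $sPt$ as $M$-saturated is correct. The gap is exactly where you flagged it, and unfortunately the proposed fix does not close it. The walk $tPz+R$ need not contain \emph{any} simple $M$-alternating $t$--$y$ path. For instance, take $tPz=t\,c\,d\,a\,b\,z$ (with $cd,ab\in M$ and $tc,da,bz\notin M$) and $R=z\,z'\,b\,a\,e\,f\,y$ (with $zz',ba,ef\in M$ and $z'b,ae,fy\notin M$); all of $a,b,z,z',e,f$ lie in $\vup{S}$ while $c,d$ do not, and $t\notin S$ is forced by the $M$-saturated path $sPt$. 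The only simple $t$--$y$ path in the union is $t\,c\,d\,a\,e\,f\,y$, which fails to alternate at $a$ (both $da$ and $ae$ are non-$M$). Proposition~\ref{prop:intersection_path} does tell you the overlap $a\,b$ is $M$-saturated, but this is precisely why both paths enter and leave that overlap through non-$M$ edges, so no reroute at either end of the overlap restores alternation.

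The paper sidesteps the whole issue by cutting earlier: instead of going all the way to the endpoint $z$, it lets $y$ be the \emph{first} vertex of $P$ (traced from $s$) lying in $\vup{S}$, and then lets $z$ be the last $V(H)$-vertex before $y$. Now $zPy$ has no vertex in $\vup{S}$ other than $y$, while the Lemma~\ref{lem:up-path} path $Q$ from $y$ to some $t\in S$ has every vertex except $t$ inside $\vup{S}$; hence $zPy$ and $Q$ meet only at $y$ (and $z\neq t$ since $sPz$ is $M$-saturated, forcing $z\notin S$). The concatenation is automatically a simple $M$-ear, and Proposition~\ref{prop:ear}(ii) gives the contradiction directly. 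So the remedy is not to reroute, but to pick the cut point so that the two pieces are disjoint by construction.
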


\begin{proof}
Suppose the claim fails, that is,
there is a path $P$ that is 
$M$-balanced from $s$ to $x$ or $M$-saturated between $s$ and $x$.  
Trace $P$ from $s$ and let $y$ be the first vertex we encounter that is in $\vup{S}$.
Trace $sPy$ from $y$ and let $z$ be the first vertex we encounter that is in $V(H)$. 
Then, since $V(H)$ and $\vup{S}$ are separating, 
$zPy$ is an $M$-exposed path by Proposition~\ref{prop:separating}. 
Consequently $sPz$ is an $M$-saturated path between $s$ and $z$, 
which means $z\not\in S$ by Proposition~\ref{prop:deletable2path}. 
\if0
Then, $zPy$ is an $M$-exposed path, 
consequently $sPz$ is an $M$-saturated path between $s$ and $z$, 
which means $z\not\in S$ by Proposition~\ref{prop:deletable2path}. 
\fi

On the other hand, by Lemma~\ref{lem:up-path}, 
there is an $M$-balanced path $Q$ from $y$ to some vertex $t\in S$ 
whose vertices except $t$ are contained in $\vup{S}$. 
Therefore, $zPy + yQt$ is an $M$-ear relative to $H$, 
whose end vertices are $z$ and $t$;  
this contradicts Proposition~\ref{prop:ear} since $z\not\gsim{G} t$. 
%
%
%
%
\qed  
\end{proof}

The next one,  Lemma~\ref{lem:distinctivebase},  is rather easy to see 
by Proposition~\ref{prop:ear}, 
and combining it with Lemma~\ref{lem:up-path} 
we can obtain Lemma~\ref{lem:path2up}. 
\begin{lemma}\label{lem:distinctivebase}
Let $G$ be a factorizable graph and $M$ be a perfect matching of $G$.  
Let $H\in \mathcal{G}(G)$, and  let $u, v \in V(H)$ be such that 
$u\not\gsim{G} v$. 
Let $P$ be an $M$-saturated path between $u$ and $v$ 
such that $E(P)\setminus E(H) \neq \emptyset$, 
and let $P_1,\ldots , P_l$ be the components of $P-E(H)$.
Let $S_0, S_{l+1} \in \pargpart{G}{H}$ be such that $u\in S_0$ and $v\in S_{l+1}$.
Then,  
\renewcommand{\labelenumi}{\theenumi}
\renewcommand{\labelenumi}{{\rm \theenumi}}
\renewcommand{\theenumi}{(\roman{enumi})}
\begin{enumerate}
\item \label{item:base} 
two end vertices of $P_i$ belong to the same member of $\pargpart{G}{H}$,
 say $S_i$, 
\item \label{item:up} 
$P_i$ is, except its end vertices, contained in $\vup{S_i}$
for each $i = 1,\ldots, l$, and 
\item \label{item:distinctive}
for any $i,j\in\{0,\ldots, l+1\}$ with $i\neq j$, 
$S_i\neq S_j$. 
\end{enumerate}
\end{lemma}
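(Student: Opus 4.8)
The plan is to read off the structure of $P$ relative to the separating set $V(H)$ and then reduce each part to tools already available. Since $V(H)$ is separating, Proposition~\ref{prop:separating} gives that $M_{V(H)}$ is a perfect matching of $G[V(H)]$ and, crucially, that $\delta(V(H))\cap M=\emptyset$. As the two end vertices of $P$ lie in $V(H)$, Proposition~\ref{prop:intersection}~(ii) shows that every component of $P-E(H)$ is an $M$-ear relative to $V(H)$. The components containing an edge are exactly the pieces of $P$ whose interiors leave $V(H)$; these are $P_1,\dots,P_l$, and I will index them in the order in which they are met when $P$ is traced from $u$ to $v$. For each $i$ write $a_i,b_i$ for the two end vertices of $P_i$, with $a_i$ met before $b_i$; note that no internal vertex of $P_i$ lies in $V(H)$, since such a vertex would be incident with two edges of $\delta(V(H))$ on $P$ and hence be exposed by $M$, contradicting that $P$ is $M$-saturated.

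Parts (i) and (ii) are then immediate from the cathedral structure. For (i), $P_i$ is an $M$-ear relative to $H$, so Proposition~\ref{prop:ear}~(ii) gives $a_i\gsim{G}b_i$, placing both in a common member $S_i\in\pargpart{G}{H}$. For (ii), the interior of $P_i$ is a connected subgraph disjoint from $V(H)$, and by Proposition~\ref{prop:ear}~(i) every factor-component it meets is a strict upper bound of $H$; hence this interior lies in a single connected component $K$ of $G[\vup{H}]$. Theorem~\ref{thm:cor} furnishes $S_K\in\pargpart{G}{H}$ with $\Gamma(K)\cap V(H)\subseteq S_K$, and since $a_i,b_i$ are adjacent to the first and last internal vertices of $P_i$ they lie in $\Gamma(K)\cap V(H)\subseteq S_K$, forcing $S_K=S_i$. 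As $K$ is a union of factor-components, each of which is by definition a member of $\parup{G}{S_i}$, we get $V(K)\subseteq\vup{S_i}$, so the interior of $P_i$ lies in $\vup{S_i}$, as required.

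The real work is (iii), and the first step is to pin down the matching pattern of $P$. Because $\delta(V(H))\cap M=\emptyset$, the $M$-edge at each of $u$, $v$, $a_i$, $b_i$ must be an edge of $H$. For $u$ and $v$, the end vertices of the $M$-saturated path $P$, this $M$-edge is their unique $P$-edge and points into $H$. For $a_i$ (resp.\ $b_i$), one of its two $P$-edges runs into the interior of $P_i$ and so lies in $\delta(V(H))$, hence is not in $M$; therefore its $M$-edge is the other $P$-edge, which points toward the $H$-segment of $P$ preceding $a_i$ (resp.\ following $b_i$). Set $c_0:=u$, $c_i:=b_i$ for $1\le i\le l$, $d_{l+1}:=v$, and $d_j:=a_j$ for $1\le j\le l$. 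Then for any $i<j$ the subpath $c_iPd_j$ has both of its terminal $M$-edges pointing inward, so, being a subpath of the $M$-saturated path $P$ with both ends matched inside it, $c_iPd_j$ is itself $M$-saturated; moreover $c_i\neq d_j$, as they occupy distinct positions on the simple path $P$, and $c_i\in S_i$, $d_j\in S_j$.

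Finally, suppose for contradiction that $S_i=S_j$ for some $i<j$ in $\{0,\dots,l+1\}$. Then $c_iPd_j$ is an $M$-saturated path between two distinct vertices of the single member $S_i$ of $\pargpart{G}{H}$, which is impossible by Proposition~\ref{prop:maximal}; the boundary case $i=0,\,j=l+1$ is $P$ itself and is excluded already by the hypothesis $u\not\gsim{G}v$. Hence $S_0,\dots,S_{l+1}$ are pairwise distinct and (iii) follows. I expect the main obstacle to be the bookkeeping of the previous paragraph---correctly reading off the orientation of the matching at $u,v,a_i,b_i$ so that every shortcut $c_iPd_j$ is genuinely $M$-saturated---whereas once this pattern is fixed, the distinctness is a single invocation of Proposition~\ref{prop:maximal}.
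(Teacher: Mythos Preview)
Your proof is correct and follows essentially the same route as the paper's: both identify each $P_i$ as an $M$-ear relative to $H$ via Proposition~\ref{prop:intersection}, obtain (i) from Proposition~\ref{prop:ear}(ii), and prove (iii) by observing that the subpath of $P$ between the trailing endpoint of $P_i$ and the leading endpoint of $P_j$ (your $c_iPd_j$, the paper's $y_iPx_j$) is $M$-saturated, so Proposition~\ref{prop:maximal} forces $S_i\neq S_j$. The only notable difference is in (ii): the paper simply writes ``\ref{item:up} follows by Proposition~\ref{prop:ear}'', whereas you spell out the missing step---Proposition~\ref{prop:ear}(i) by itself only places the interior of $P_i$ in $\vup{H}$, and one needs Theorem~\ref{thm:cor} together with $a_i,b_i\in\Gamma(K)\cap V(H)$ to pin down the component $K$ to $\vup{S_i}$---so your version is the more complete of the two.
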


\begin{proof}
By Proposition~\ref{prop:intersection}, 
$P_i$ is an $M$-ear relative to $H$ for each $i = 1,\ldots, l$;  
therefore,  
\ref{item:base} follows by Proposition~\ref{prop:ear}.
Thus, \ref{item:up} follows by Proposition~\ref{prop:ear}. 
For \ref{item:distinctive},
let the end vertices of $P_i$ be $x_i$ and $y_i$ for each $i = 1,\ldots, l$.
Without loss of generality, we can assume that
the vertices $u =: y_0, x_1, y_1,\ldots, x_l, y_l,x_{l+1}:= v$ appear in this order 
if we trace $P$ from $u$.
Then,  for any $i, j$ with $0\le i < j \le l+1$,
$y_iPx_j$ forms an $M$-saturated path between $y_i\in S_i$ and $x_j\in S_j$. 
Thus we have $S_i\neq S_j$ by Proposition~\ref{prop:deletable2path}; 
this means \ref{item:distinctive}, and we are done. 
\qed
\end{proof}

\begin{lemma}\label{lem:path2up}
Let G be a factorizable graph and  $M$ be a perfect matching of $G$. 
Let $H\in\mathcal{G}(G)$,  and let 
$S, T\in\pargpart{G}{H}$ be such that  $S\neq T$.  
Then, for any $s\in S$ and $t\in \vupstar{T}$, 
 there is an $M$-saturated path $P$ between $s$ and $t$,
which is contained in $\vupstar{H}\setminus \vup{S}$.
\end{lemma}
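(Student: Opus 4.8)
The plan is to build the required path by concatenating two pieces: an $M$-balanced path climbing down from $t$ to a vertex of the base class $T$, and an $M$-saturated path lying in the upper structure of $H$ that connects $s$ to that vertex. First I would apply Lemma~\ref{lem:up-path} to $T$ and to $t\in\vupstar{T}$, obtaining an $M$-balanced path $Q$ from $t$ to some $y\in T$ whose vertices other than $y$ all lie in $\vup{T}$; when $t\in T$ this $Q$ is trivial and $y=t$. Since $\vup{T}$ is disjoint from $\vup{S}$ and from $V(H)$, the path $Q$ already lies in $\vupstar{H}\setminus\vup{S}$, so the whole problem reduces to connecting $s$ to $y$ appropriately.

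Next I would produce the connecting path. Because $S\neq T$, the vertices $s\in S$ and $y\in T$ lie in distinct classes of $\gsim{G}$ inside the same factor-component $H$, hence $s\not\gsim{G} y$; by the definition of $\gsim{G}$ this forces $G-s-y$ to be factorizable, and Proposition~\ref{prop:two-element-extreme} then yields an $M$-saturated path $R$ between $s$ and $y$. The delicate point is to locate $R$. If $E(R)\subseteq E(H)$, then $R$ lies in $V(H)\subseteq \vupstar{H}\setminus\vup{S}$ and nothing more is needed. Otherwise I would invoke Lemma~\ref{lem:distinctivebase} with $u=s$ and $v=y$: its components relative to $H$ are $M$-ears $P_1,\dots,P_l$ governed by classes $S_1,\dots,S_l\in\pargpart{G}{H}$, with $P_i$ contained (save its ends) in $\vup{S_i}$, and with $S_0=S$, $S_{l+1}=T$, all of $S_0,\dots,S_{l+1}$ distinct. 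Using the disjointness of the sets $\vup{U}$ over $U\in\pargpart{G}{H}$, the distinctness of these classes guarantees simultaneously that $R$ stays inside $\vupstar{H}$, that it avoids $\vup{S}$ (as $S=S_0$ differs from every ear class), and that it avoids $\vup{T}$ (as $T=S_{l+1}$ also differs from every ear class). Thus $R\subseteq\vupstar{H}\setminus\vup{S}$ and, crucially, $R$ meets $\vup{T}$ in no vertex.

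Finally I would concatenate: since all interior vertices of $Q$ lie in $\vup{T}$ and $R$ avoids $\vup{T}$, the paths $R$ and $Q$ share only $y$, so $R+Q$ is a genuine $s$--$t$ path. Along it every vertex is $M$-covered: interior vertices of $R$ and the end $s$ are covered because $R$ is $M$-saturated, interior vertices of $Q$ and the end $t$ because $Q$ is $M$-balanced exposing only $y$, and $y$ itself is covered by the $M$-edge of $R$ incident to it (while the $Q$-edge at $y$ is non-matching, giving proper alternation). Hence $R+Q$ is the desired $M$-saturated path from $s$ to $t$ contained in $\vupstar{H}\setminus\vup{S}$. I expect the main obstacle to be exactly this containment-and-disjointness bookkeeping around $y$, and it is resolved entirely by the distinctness clause of Lemma~\ref{lem:distinctivebase} together with the disjointness of the upper sets; the verification that the concatenation is $M$-saturated and internally disjoint is then routine.
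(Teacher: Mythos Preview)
Your proof is correct and follows essentially the same approach as the paper: obtain the $M$-balanced path $Q$ from $t$ down to some $y\in T$ via Lemma~\ref{lem:up-path}, obtain an $M$-saturated path $R$ between $s$ and $y$, use Lemma~\ref{lem:distinctivebase} to confine $R$ to $\vupstar{H}\setminus\vup{S}\setminus\vup{T}$, and concatenate. Your treatment is in fact slightly more careful than the paper's, since you explicitly separate the case $E(R)\subseteq E(H)$ (where the hypothesis of Lemma~\ref{lem:distinctivebase} is not met but the conclusion is trivial) and spell out the alternation check at the junction vertex $y$.
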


\begin{proof}
By Lemma~\ref{lem:up-path},
there is  an $M$-balanced path $P_1$ from $t$ to a vertex $x\in T$ 
whose vertices except $x$ are  contained in $\vup{T}$. 
By Proposition~\ref{prop:deletable2path}, 
there is an $M$-saturated path $P_2$ between $s$ and $x$. 
By Lemma~\ref{lem:distinctivebase}, 
$V(P_2)$ is contained in $\vupstar{H}\setminus \vup{S}\setminus \vup{T}$;  
accordingly,  $V(P_1)\cap V(P_2) = \{ x\}$.
Hence, $P:= P_1 + P_2$ is an $M$-saturated path between $s$ and $t$,
contained in $\vupstar{H}\setminus \vup{S}$.
\qed 
\end{proof}

Lemma~\ref{lem:path2up} immediately yields
 the following: Lemma~\ref{lem:path2up_restated}. 
\begin{lemma}\label{lem:path2up_restated}
Let G be a factorizable graph and $M$ be a perfect matching of $G$.  
Let $H\in\mathcal{G}(G)$,  and 
let $S, T\in\pargpart{G}{H}$ be such that  $S\neq T$.  
Then, for any $s\in S$ and $t\in \vupstar{T}$, 
 there is an $M$-saturated path $P$ between $s$ and $t$
 such that for any $u \in S$ and $v\in V(P)\setminus S$
 there is an $M$-saturated path between $u$ and $v$. 
\end{lemma}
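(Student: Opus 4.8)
The plan is to obtain this as an essentially immediate consequence of Lemma~\ref{lem:path2up}, the only extra ingredient being a bookkeeping of the region $\vupstar{H}\setminus\vup{S}$ in which the path produced there lives. First I would invoke Lemma~\ref{lem:path2up} to fix, for the given $s\in S$ and $t\in\vupstar{T}$, an $M$-saturated path $P$ between $s$ and $t$ with $V(P)\subseteq\vupstar{H}\setminus\vup{S}$. It then remains only to show that every vertex of $V(P)\setminus S$ can be joined to every vertex of $S$ by an $M$-saturated path, and for this I would reapply the very same lemma.

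The key step is to decompose the region $\vupstar{H}\setminus\vup{S}$ along the canonical partition of $H$. Starting from $V(H)=\dot{\bigcup}_{T'\in\pargpart{G}{H}}T'$ and the identity $\dot{\bigcup}_{T'\in\pargpart{G}{H}}\up{T'}=\up{H}$ recorded after Theorem~\ref{thm:cor} (which on the vertex level reads $\vup{H}=\dot{\bigcup}_{T'\in\pargpart{G}{H}}\vup{T'}$), I would write $\vupstar{H}=\dot{\bigcup}_{T'\in\pargpart{G}{H}}\vupstar{T'}$, a disjoint union because the $T'$ are disjoint, the $\vup{T'}$ are disjoint, and each $\vup{T'}$ lies above $H$ and so misses $V(H)$. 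Since $\vup{S}$ meets none of these pieces except $\vupstar{S}=S\cup\vup{S}$, deleting $\vup{S}$ leaves exactly $\vupstar{H}\setminus\vup{S}=S\,\dot{\cup}\,\dot{\bigcup}_{T'\neq S}\vupstar{T'}$.

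Given this decomposition the conclusion follows at once: for any $u\in S$ and $v\in V(P)\setminus S$ we have $v\in\vupstar{T'}$ for some $T'\in\pargpart{G}{H}$ with $T'\neq S$, and applying Lemma~\ref{lem:path2up} again---with the same $H$ and $S$ but with $T'$ playing the role of $T$, and with $u$ and $v$ in place of $s$ and $t$---yields an $M$-saturated path between $u$ and $v$, as desired. I do not expect a genuine obstacle here; the whole substance is the middle step, and the only point demanding a little care is verifying the disjointness that guarantees removing $\vup{S}$ strips nothing away from the other towers $\vupstar{T'}$, which is precisely what the identity $\dot{\bigcup}_{T'}\up{T'}=\up{H}$ (together with $\vup{S}\cap V(H)=\emptyset$) provides.
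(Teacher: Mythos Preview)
Your proposal is correct and follows the same route as the paper: the paper simply states that Lemma~\ref{lem:path2up} ``immediately yields'' Lemma~\ref{lem:path2up_restated}, and what you have written is exactly the spelled-out version of that immediacy, namely that $V(P)\setminus S\subseteq \vupstar{H}\setminus\vupstar{S}=\dot{\bigcup}_{T'\neq S}\vupstar{T'}$ so that Lemma~\ref{lem:path2up} applies again to each $u\in S$ and $v\in V(P)\setminus S$. The decomposition step is sound, as it rests precisely on the disjoint-union identity $\dot{\bigcup}_{T'\in\pargpart{G}{H}}\up{T'}=\up{H}$ noted after Theorem~\ref{thm:cor}.
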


\begin{theorem}\label{thm:char2elem}
Let $G$ be a factorizable graph, $M$ be a perfect matching of $G$,
and $u, v\in V(G)$ be such that 
$G-u-v$ is not factorizable. 
If there are $M$-balanced paths respectively  from $u$ to $v$ 
and from $v$ to $u$,
then $u$ and $v$ are in the same factor-component of $G$.
\end{theorem}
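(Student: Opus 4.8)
The plan is to produce an $M$-alternating circuit of $G$ passing through both $u$ and $v$; Proposition~\ref{prop:circ2elem} then finishes the proof. First I would unwind the two balanced paths. Writing $uu'\in M$ and $vv'\in M$ for the matching edges at $u$ and $v$, the balanced path $P$ from $u$ to $v$ necessarily contains $uu'$ (since $M\cap E(P)$ covers $u$) but not $vv'$, while $Q$ contains $vv'$ but not $uu'$. Contracting every edge of $M$, the paths $P$ and $Q$ become walks joining the contracted vertices $x_u$ (from $uu'$) and $x_v$ (from $vv'$) that use only non-matching edges; because $u$ and $v$ are not $M$-adjacent we have $x_u\neq x_v$, and each of $P,Q$ projects to a genuine path $\tilde P,\tilde Q$ between $x_u$ and $x_v$. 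An $M$-alternating circuit through $u$ and $v$ corresponds exactly to a cycle through $x_u$ and $x_v$ in this contracted graph whose incident non-matching edges attach to the correct endpoints of the contracted matching edges.

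If $\tilde P$ and $\tilde Q$ are internally disjoint, then $\tilde P\cup\tilde Q$ is already such a cycle: at $x_u$ the two paths attach on opposite sides of $uu'$ (at $u'$ via $P$ and at $u$ via $Q$), and likewise at $x_v$, so the lift is a single $M$-alternating circuit running through $u,u'$ and $v,v'$. Proposition~\ref{prop:circ2elem} then places $u$ and $v$ in the same factor-component, and the proof is complete.

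The remaining, and main, difficulty is the case where $\tilde P$ and $\tilde Q$ share an internal vertex, i.e. where $P$ and $Q$ share some matching edge $e=xx'$. Here everything turns on the relative orientation with which $\tilde P$ and $\tilde Q$ traverse $e$. In one orientation I can splice the initial segment of $\tilde P$ with a segment of $\tilde Q$ so that the resulting $x_u$--$x_v$ path attaches at $u'$ and at $v'$; its lift is an $M$-\emph{saturated} path between $u$ and $v$, whence $G-u-v$ would be factorizable by Proposition~\ref{prop:two-element-extreme}, contradicting the hypothesis. Thus this orientation cannot occur, and the assumption that $G-u-v$ is not factorizable is used precisely here. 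In the other orientation the same segments split instead into two $M$-alternating circuits, one through $x_u$ and $e$ and one through $x_v$ and $e$; lifting, $u$ lies on a circuit through $x$ and $v$ lies on another circuit through $x$, so $u$ and $v$ are both in the factor-component of $x$ by Proposition~\ref{prop:circ2elem} and transitivity. I expect the technical heart of the argument to be the bookkeeping that makes these spliced objects genuine simple circuits and a genuine simple saturated path: one chooses the first shared vertex along $\tilde P$ (and innermost subsegments) and checks that the non-matching edges at each splice attach to distinct ends of $e$, so that every lift is a valid $M$-alternating object.
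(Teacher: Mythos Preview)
Your strategy---splice $P$ and $Q$ at a shared matching edge, argue that one orientation yields an $M$-saturated $u$--$v$ path (contradicting Proposition~\ref{prop:two-element-extreme}) while the other yields alternating circuits---is exactly the mechanism the paper uses. But your proof has a real gap at the step where you assert that in the surviving orientation ``the same segments split instead into two $M$-alternating circuits, one through $x_u$ and $e$ and one through $x_v$ and $e$.'' Only the first of these two is guaranteed to be simple. If $z$ is the first internal vertex of $\tilde P$ (from $x_u$) lying on $\tilde Q$, then indeed $x_u\tilde P z$ meets $\tilde Q$ only in $\{x_u,z\}$, so $x_u\tilde P z + z\tilde Q x_u$ is a genuine cycle and lifts to an $M$-alternating circuit through $u$ and $x$. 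The remaining pieces $z\tilde P x_v$ and $x_v\tilde Q z$, however, may share further internal vertices, so their union need not be a cycle at all; you cannot conclude that $v$ lies on an alternating circuit through $x$ from this single splice. Your parenthetical ``(and innermost subsegments)'' gestures at the fix but does not carry it out.

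The paper closes this gap by iterating rather than trying to produce two circuits at once. It defines a sequence $v=x_0,x_1,\dots,x_l=u$ by repeatedly tracing $Q$ from $x_i$ until it first re-enters $uPx_i$, and proves inductively (Claim~\ref{claim:char2elem}) that $uPx_i$ and $vQx_i$ remain $M$-balanced at every stage; the hypothesis that $G-u-v$ is not factorizable is invoked at each step to rule out the ``bad'' orientation, exactly as you do once. Each consecutive pair $x_i,x_{i+1}$ then lies on a single guaranteed-simple alternating circuit $x_iQx_{i+1}+x_{i+1}Px_i$, and transitivity of factor-components along the chain finishes the argument. To repair your proof you need this iteration (or an equivalent induction on, say, the number of shared contracted vertices); a single splice does not suffice.
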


\begin{proof}
Let $P$ be an $M$-balanced path from $u$ to $v$,
and $Q$ be an $M$-balanced path from $v$ to $u$. 
Let $x_0, x_1, \ldots $ be the sequence of vertices in $V(P)\cap V(Q)$ defined 
by the following procedure:  
 
\begin{algorithmic}[1]
\STATE $x_0:= v$; $i:= 0$; 
\WHILE{$x_i\neq u$} 
\STATE trace $x_iQu$ from $x_i$ and let $x_{i+1}$ be the first vertex we encounter 
that is in $V(uPx_i)\setminus \{x_i\}$;  
\STATE $i++$. 
\ENDWHILE
\end{algorithmic}

Note that this procedure surely stops in finite time 
(since each repetition of the while-loop $x_i$ draw nearer to $u$) 
and returns $v=x_0,\ldots, x_l=u$ for some $l\ge 0$. 
Note also the next claim, which is easy to see by the definition. 
\begin{cclaim}\label{claim:commonvertex} 
\renewcommand{\labelenumi}{\theenumi}
\renewcommand{\labelenumi}{{\rm \theenumi}}
\renewcommand{\theenumi}{(\roman{enumi})}
\begin{enumerate}
\item \label{item:reverse}
Tracing $P$ from $u$, we encounter $x_l, \ldots, x_0$ in this order. 
\item \label{item:commontwo}
For each $i = 0,\ldots, l-1$, 
$uPx_i$ and $x_iQx_{i+1}$ have only $\{x_i, x_{i+1}\}$ as common vertices. 
\item \label{item:commonone}
For each $i = 0,\ldots, l$,  
$uPx_i$ and $vQx_i$ has only $x_i$ as a common vertex. 
\end{enumerate}
\end{cclaim}
\begin{proof}
By the definition procedure, for each $i = 0, \ldots, l-1$, 
$x_{i+1}$ is located on $P$ nearer to $u$ than $x_i$ is;  
this yields \ref{item:reverse}. 
The statement \ref{item:commontwo} is also apparent from the definition. 

For \ref{item:commonone} note that 
$vQx_i = x_0Qx_1 + \cdots + x_{i-1}Qx_i$.  
Therefore it suffices to prove that for each $0 \le j \le i-1$ 
$uPx_i$ and $x_{j}Qx_{j+1}$ have at most $x_i$ as a common vertex; 
this holds true, 
since $V(uPx_j)\cap V(x_jQx_{j+1}) = \{x_j, x_{j+1} \}$ by \ref{item:commontwo}, 
and $V(uPx_i) \subseteq V(uPx_j)\setminus \{x_j, \ldots, x_{i-1}\}$ by \ref{item:reverse}. 
\qed
\end{proof}

\begin{cclaim}\label{claim:char2elem}
For each $i = 0, \ldots, l-1$, 
$x_iQx_{i+1}$ is an $M$-balanced path from $x_i$ to $x_{i+1}$. 
For each $i = 0,\ldots, l$, 
$uPx_i$ and $vQx_i$ are $M$-balanced paths
 from $u$ to $x_i$ and from $v$ to $x_i$, respectively.  
\end{cclaim}
\begin{proof}
We give it by the induction on $i$. 
If $i=0$ then both of the claims are rather trivially true, 
and if $i = l$ then the second claim is trivially true. 
Therefore 
let  $0 < i < l$ and suppose the claims are true for $i-1$. 
Since $vQx_{i-1}$ is an $M$-balanced path from $v$ to $x_{i-1}$ by the induction hypothesis, 
$x_{i-1}Qu$ is an $M$-balanced path from $x_{i-1}$ to $u$. 
Additionally, since $uPx_{i-1}$ is an $M$-balanced path from $u$ to $x_{i-1}$
by the induction hypothesis, 
it follows that  $x'_{i-1}\in V(x_{i-1}Pv)$ 
and 
the definition procedure yields that 
$x_{i-1}Qx_i$ is an $M$-balanced path from $x_{i-1}$ to $x_i$. 
Therefore, we have that $vQx_i = vQx_{i-1} + x_{i-1}Qx_i$ 
is also an $M$-balanced path, from $v$ to $x_i$. 

Now note that $uPx_{i} + x_iQv$ forms a path, 
since they have only $x_{i}$ as a common vertex by Claim~\ref{claim:commonvertex}. 
Suppose that $uPx_i$ is an $M$-saturated path between $u$ and $x_i$. 
Then, $uPx_i + x_iQv$ is an $M$-saturated path between $u$ and $v$. 
This contradicts Proposition~\ref{prop:extreme2path}. 
Therefore, $uPx_i$ is an $M$-balanced path from $u$ to $x_i$, 
and we are done.   
\qed
\end{proof}
Since Claim~\ref{claim:char2elem} says
$uPx_i$ is an $M$-balanced path for each $i = 0,\ldots, l$, 
it follows by Claim~\ref{claim:commonvertex}  
 that $x_iPx_{i+1}$ is an $M$-balanced path from 
$x_{i+1}$ to $x_i$ for each $i = 0, \ldots, l-1$. 
Therefore, 
$x_iQx_{i+1}$ and $x_{i+1}Px_i$ forms 
an $M$-alternating circuit, since 
they have only $\{x_i, x_{i+1}\}$ as common vertices by Claim~\ref{claim:commonvertex}. 
Therefore, by Proposition~\ref{prop:circ2elem}, 
$x_i$ and $x_{i+1}$ are contained in the same factor-component of $G$
for each $i= 0,\ldots, l-1$. 
This yields that $u$ and $v$ are contained in the same factor-component. 
\qed
\end{proof}

Now we are ready to prove the main theorem, 
combining up the results in this section. 
\begin{theorem}\label{thm:barrier2up}
Let $G$ be a factorizable graph, and  $X$ be an odd-maximal barrier of $G$. 
Let $D_1,\ldots, D_k$ be the DM-components of $\hxg{X}{G}$. 
Let  $\what{V}_1,\ldots, \what{V}_k$ be the partition  of $X\cup D_X$  
such that for each $i = 1, \ldots, k$,    
$\what{D}_i := G[\what{V}_i]$ is the expansion of $D_i$. 
Then,  for each $i = 1,\ldots, k$, 
$S_i := X\cap \what{V}_i$ coincides with 
a member of $\pargpart{G}{H_i}$ for some $H_i \in \mathcal{G}(G)$, 
and $\what{V}_i$
coincides with $\vupstar{H_i}\setminus \vup{S_i}$. 
%
\end{theorem}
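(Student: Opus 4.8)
The plan is to fix an index $i$, locate the factor-component $H_i$ that contains the $X$-side $S_i := X\cap\what{V}_i$ of the DM-component $D_i$, and then match the expansion $\what{V}_i$ with the cathedral block $\vupstar{H_i}\setminus\vup{S_i}$ by translating alternating-path reachability between the two languages. First I would show that all of $S_i$ lies in one factor-component. For $u,v\in S_i$, both belong to the expansion of $D_i$, so $D_u=D_v=D_i$ and hence $D_u\dm D_v$ and $D_v\dm D_u$ by reflexivity; Theorem~\ref{thm:path2dm} then yields $M$-balanced paths in both directions between $u$ and $v$. Since $u,v\in X$, there is no $M$-saturated path between them (Proposition~\ref{prop:extreme2path}), so $G-u-v$ is not factorizable (Proposition~\ref{prop:two-element-extreme}); Theorem~\ref{thm:char2elem} then forces $u$ and $v$ into a common factor-component, which I call $H_i$, giving $S_i\subseteq V(H_i)$.

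Next I would establish that $S_i=X\cap V(H_i)$ and that it is a full member of $\pargpart{G}{H_i}$. The key point is that $\what{V}_i$ is separating (Proposition~\ref{prop:dm2partition}), hence a union of factor-components (Proposition~\ref{prop:separating}); since $\emptyset\neq S_i\subseteq V(H_i)\cap\what{V}_i$, the whole of $V(H_i)$ lies in $\what{V}_i$, whence $X\cap V(H_i)\subseteq X\cap\what{V}_i=S_i\subseteq X\cap V(H_i)$, i.e.\ $S_i=X\cap V(H_i)$. I then invoke the characterization of Proposition~\ref{prop:deletable2path}: inside the barrier $X$ there is no $M$-saturated path between two vertices of $S_i$ (Proposition~\ref{prop:extreme2path}), and $S_i$ is maximal with this property because any $w\in V(H_i)\setminus S_i$ lies outside $X$ (as $X\cap V(H_i)=S_i$) and belongs to some class $T\neq S_i$, so Lemma~\ref{lem:path2up} supplies an $M$-saturated path from $S_i$ to $w$; thus $S_i\cup\{w\}$ violates the property. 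Hence $S_i\in\pargpart{G}{H_i}$.

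It then remains to prove the vertex identity $\what{V}_i=\vupstar{H_i}\setminus\vup{S_i}$, equivalently that the factor-components of the separating set $\what{V}_i$ other than $H_i$ are exactly the up-cone $\vup{H_i}\setminus\vup{S_i}$. A useful preliminary is that no vertex of $X$ lies in $\vup{H_i}\setminus\vup{S_i}$: such an $x$ would be saturated-reachable from $S_i$ by Lemma~\ref{lem:path2up}, contradicting Proposition~\ref{prop:extreme2path}; together with $X\cap V(H_i)=S_i$ this gives $X\cap(\vupstar{H_i}\setminus\vup{S_i})=S_i$, while Lemma~\ref{lem:nosaturate} and the barrier property correctly exclude $\vupstar{S_i}$ on the other side. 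The two inclusions are, however, genuinely intertwined, because the paths of Lemma~\ref{lem:path2up} are only guaranteed to stay inside $\vupstar{H_i}\setminus\vup{S_i}$, and deciding whether they remain inside $\what{V}_i$ amounts to knowing that $\what{V}_j\cap(\vupstar{H_i}\setminus\vup{S_i})=\emptyset$ for $j\neq i$ — which is precisely the disjointness being proved.

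I expect this last matching to be the main obstacle, and the clean way to organize it is to reduce to the case of a single DM-component. By Proposition~\ref{prop:dm2partition}, $\what{D}_i$ is a separating, factorizable subgraph in which $S_i$ is an odd-maximal barrier and $\hxg{S_i}{\what{D}_i}$ is isomorphic to the single DM-component $D_i$; in this reduced situation $X=S_i$ must be one canonical class and $\what{V}_i$ must be all of $\vupstar{H_i}\setminus\vup{S_i}$. To transport this back, the delicate step is to verify that the cathedral structure of $G$ restricts correctly to the separating set $\what{V}_i$, so that $\yield$, the partition $\pargpart{G}{H_i}$, and the up-cones computed inside $\what{D}_i$ agree with those in $G$; here $H_i$ is seen to be the unique minimal factor-component of $\what{V}_i$, the excursions realizing DM-connectivity inside $\what{V}_i$ are ears relative to $H_i$, and Proposition~\ref{prop:ear} delivers $H_i\yield H'$ for each additional component $H'$, while Lemma~\ref{lem:traverse} guarantees that vertices of the other blocks are gated by their own $X$-classes and hence stay out of $\what{V}_i$. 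Granting this restriction principle, combining the two inclusions yields $\what{V}_i=\vupstar{H_i}\setminus\vup{S_i}$ and completes the proof.
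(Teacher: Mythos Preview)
Your overall strategy and toolkit match the paper's: locate $H_i$ via Theorem~\ref{thm:char2elem}, identify $S_i$ as a class via Proposition~\ref{prop:deletable2path}, and settle $\what{V}_i=\vupstar{H_i}\setminus\vup{S_i}$ using Lemmas~\ref{lem:path2up}, \ref{lem:nosaturate}, \ref{lem:traverse}. There is, however, a genuine circularity in your second paragraph. To prove maximality of $S_i$ you invoke Lemma~\ref{lem:path2up} with ``$S_i$ and $T\neq S_i$'', but that lemma requires both $S$ and $T$ to be \emph{members} of $\pargpart{G}{H_i}$, which is exactly what you are proving for $S_i$. Concretely, you have not excluded that the class $S'\in\pargpart{G}{H_i}$ containing $S_i$ is strictly larger: a vertex $w\in S'\setminus S_i$ would lie in $V(H_i)\setminus X$ yet have no $M$-saturated path from any $s\in S_i$, so your maximality step fails for such $w$. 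The paper avoids this by \emph{first} establishing (its Claim~4) that every $v\in\what{V}_i\setminus S_i$ is $M$-saturated--reachable from $S_i$ \emph{inside} $\what{V}_i$, by applying Theorem~\ref{thm:path2dm} to the single-DM-component graph $\hxg{S_i}{\what{D}_i}$ from Proposition~\ref{prop:dm2partition}\ref{item:isomorphic}. You already have this tool in your last paragraph; it simply needs to come before the appeal to Proposition~\ref{prop:deletable2path}.

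For the final identity, the ``restriction principle'' you propose is unnecessary and is essentially equivalent to the statement being proved; the paper does not compare the cathedral structures of $G$ and $\what{D}_i$ at all. It argues the two inclusions directly in $G$. For $\what{V}_i\supseteq\vupstar{H_i}\setminus\vup{S_i}$: given $y$ on the right with $y\notin S_i$, the path of Lemma~\ref{lem:path2up} from some $u\in S_i$ to $y$ forces $y\in D_X$ by Proposition~\ref{prop:x2xd}, say $y\in\what{V}_j$ with $D_i\dm D_j$ by Theorem~\ref{thm:path2dm}; if $j\neq i$, Lemma~\ref{lem:traverse} pushes the path through a vertex of $S_j\subseteq X$, which by Lemma~\ref{lem:path2up_restated} would be $M$-saturated--reachable from $S_i$, contradicting Proposition~\ref{prop:x2xd}. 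For $\what{V}_i\subseteq\vupstar{H_i}\setminus\vup{S_i}$: the Claim~4 path from any $z\in\what{V}_i\setminus V(H_i)$ to $S_i$, traced to its first hit on $H_i$, gives an $M$-balanced path to the contracted vertex in $\what{D}_i/H_i$; since $z$ is arbitrary, Proposition~\ref{prop:path2root} makes $\what{D}_i/H_i$ factor-critical, hence $\what{V}_i\subseteq\vupstar{H_i}$, and Claim~4 together with Lemma~\ref{lem:nosaturate} then excludes $\vup{S_i}$. Your parenthetical sketch at the end (``Proposition~\ref{prop:ear} delivers $H_i\yield H'$ \ldots\ Lemma~\ref{lem:traverse} guarantees \ldots'') is close to this, so the substance is there; just drop the restriction-principle framing and run these two inclusions directly.
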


\begin{proof}
Note that such a partition of $X\cup D_X$ surely exists
by Proposition~\ref{prop:dm2partition}. 
Let $M$ be a perfect matching of $G$. Let $i\in \{1,\ldots, k\}$. 

\begin{cclaim}\label{claim:nosaturate}
There is no $M$-saturated path between any two vertices of $S_i$. 
\end{cclaim}
\begin{proof}
This is immediate from Proposition~\ref{prop:x2xd}.
\qed 
\end{proof}

\begin{cclaim}\label{claim:containedinelem}
$S_i$ is contained in the same factor-component of $G$, say $H_i$.
\end{cclaim}
\begin{proof}
Take $u, v\in S_i$ arbitrarily. 
Note first that there is no $M$-saturated path between $u$ and $v$,  
by Claim~\ref{claim:nosaturate}. 
Additionally, 
there are $M$-balanced paths from $u$ to $v$ and 
from $v$ to $u$ respectively,  
which is immediate from Theorem~\ref{thm:path2dm} and Proposition~\ref{prop:dm}. 
Therefore by Theorem~\ref{thm:char2elem}, 
$u$ and $v$ are contained in the same factor-component. 
Thus, we have the claim. 
\qed
\end{proof} 
Since $\what{V}_i$ is separating by Proposition~\ref{prop:dm2partition}, 
\begin{cclaim}\label{claim:contained}
$V(H_i) \subseteq \what{V}_i$. 
\end{cclaim}
%
\begin{cclaim}\label{claim:saturate}
For any $u\in S_i$ and any $v\in \what{V}_i \setminus S_i$, 
there is an $M$-saturated path between $u$ and $v$  
whose vertices are contained in $\what{V}_i$. 
\end{cclaim}
\begin{proof} 
Note that $M_{\what{V}_i}$ is a perfect matching of $\what{D}_i$, 
$S_i$ is an odd-maximal barrier of $\what{D}_i$,  and 
$\hxg{S_i}{\what{D}_i}$ is a factorizable bipartite graph with
exactly one DM-component by Proposition~\ref{prop:dm2partition}.
Thus, by applying Theorem~\ref{thm:path2dm} to $\what{D}_i$, $M_{\what{V}_i}$ and $S_i$,  
there is an $M$-saturated path  
between any $u\in S_i$ and any $v\in \what{V}_i\setminus S_i$,
which is contained in $\what{V}_i$.
%
\qed
\end{proof}
By combining Claims~\ref{claim:nosaturate}, 
\ref{claim:containedinelem}, \ref{claim:contained},  and \ref{claim:saturate}, 
we obtain that 
$S_i$ is a maximal subset of $V(H_i)$ such that 
there is no $M$-saturated path between any two vertices of it. 
Hence, by Proposition~\ref{prop:deletable2path}, 
$S_i\in \pargpart{G}{H_i}$ holds. 

\begin{cclaim}\label{claim:supset} 
$\what{V}_i \supseteq \vupstar{H_i}\setminus \vup{S_i}$. 
\end{cclaim}
\begin{proof} 
Take $y\in \vupstar{H_i}\setminus \vup{S_i}$ arbitrarily. 
If $y\in S_i$, then of course $y\in \what{V}_i$. 
Hence hereafter let $y\in \vupstar{H_i}\setminus \vupstar{S_i}$,  
and let $T\in \pargpart{G}{H_i}\setminus \{S_i\}$ be such that $y\in\vupstar{T}$. 

Let $u \in S_i$. 
There is an $M$-saturated path $P$ between  $u$ and $y$ 
by Lemma~\ref{lem:path2up}.
Hence, by Proposition~\ref{prop:x2xd}, 
$y\in D_X$. 
Therefore, 
there exists $j\in\{1,\ldots, k\}$ such that $y\in \what{V}_j$. 
%
By Theorem~\ref{thm:path2dm} and Proposition~\ref{prop:dm}, 
$D_i \dm D_j$. 

If $i \neq j$,   
then by Lemma~\ref{lem:traverse}, 
$P$ has some internal vertices which belong to $S_j$. 
However, 
by Proposition~\ref{prop:x2xd}, 
there is no $M$-saturated path 
between any two vertices respectively in $S_i$ and $S_j$, 
and of course $V(P)\cap S_j$ is disjoint from $S_i$.  
This contradicts Lemma~\ref{lem:path2up_restated}. 
Hence, we obtain $i = j$;  
accordingly,  $\vupstar{H_i}\setminus \vup{S_i}$ is contained in $\what{V}_i$. 
\qed
\end{proof}

\begin{cclaim}\label{claim:subset}
$\what{V}_i \subseteq \vupstar{H_i}\setminus \vup{S_i}$. 
\end{cclaim}
\begin{proof} 
%
Let $z\in \what{V}_i \setminus V(H_i)$. 
By Claim~\ref{claim:saturate}, 
there is an $M$-saturated path $P$
between $z$ and some vertex of $S_i$ which is contained in $\what{V}_i$.  
Trace $P$ from $z$ and let $w$ be the first vertex we encounter that is in $V(H_i)$.
Since $V(H_i)$ is separating, 
$zPw$ is an $M$-balanced path from $z$ to $w$ by Proposition~\ref{prop:separating}. 
In $\what{D}_i/H_i$, $zPw$ corresponds to  an $M$-balanced path from 
$z$ to the contracted vertex $h$, corresponding to $H_i$. 
Obviously, 
$M$ contains a near-perfect matching of $\what{D}_i/H_i$ 
exposing only $h$. 
%

Therefore, 
$\what{D}_i/H_i$ is factor-critical by Proposition~\ref{prop:path2root}; 
accordingly, $\what{V}_i$ is contained in $\vupstar{H_i}$.
Additionally, 
by Claim~\ref{claim:saturate} again and Lemma~\ref{lem:nosaturate}, 
we can see that $\what{V}_i$ is disjoint from $\vup{S_i}$ and that 
$\what{V}_i$ is contained in $\vupstar{H_i}\setminus \vup{S_i}$.  
\qed
\end{proof}
Thus, by Claims~\ref{claim:supset} and \ref{claim:subset}, 
we have $\what{V}_i = \vupstar{H_i}\setminus \vup{S_i}$. 
%
%
%
%
%
\qed  
\end{proof}

\begin{remark}
If $G$ in Theorem~\ref{thm:barrier2up} is elementary, 
then $k = 1$ and $\what{V}_1 = V(G)$, 
which follows by Propositions~\ref{prop:barrier} and \ref{prop:dm2partition}. 
Therefore, in this case, 
Theorem~\ref{thm:barrier2up} claims that 
$\gpart{G}$ is the family of (odd-) maximal barriers;  
namely, 
Theorem~\ref{thm:barrier2up} coincides with Theorem~\ref{thm:canonicalpartition}. 
Therefore, 
Theorem~\ref{thm:barrier2up} can be regarded as a 
generalization of Theorem~\ref{thm:canonicalpartition}. 
\end{remark} 

\begin{remark}
Let $G$ be a factorizable graph. 
For an arbitrary vertex $x\in V(G)$, 
take a maximal barrier of $G-x$, say $X$. 
Then, $X\cup \{x\}$ is a maximal barrier of $G$; 
namely, for any vertex $x$ there is an odd-maximal barrier that contains $x$. 
Therefore, 
for any $S\in \gpart{G}$, 
there exists an odd-maximal barrier that contains $S$. 
\end{remark}  

\begin{remark}
%
With Kir\'aly~\cite{kiraly1998}, if $G$ is a non-factorizable graph, 
then  $\{A(G)\} \cup \gpart{G[C(G)]}$ are the ``atoms'' that constitute odd-maximal barriers. 
For each odd-maximal barrier $X$, 
the odd components of $G-X$ are 
the components of $G[D(G)]$ and 
the odd components of $G[C(G)]-(X\setminus A(G))$; 
here $G[C(G)]$ forms a factorizable graph 
and $X\setminus A(G)$ is an odd-maximal barrier. 
%
%
\end{remark}

\section{A Slightly More Efficient
 Algorithm to Compute the Cathedral Structure}\label{sec:algeff}
Hereafter we denote by $n$ and $m$ the 
number of vertices and edges (resp. arcs) of input graph (resp. digraph), 
 respectively. 
Note that factorizable graphs satisfy 
$m = {\rm \Omega}(n)$ and accordingly $O(n+m) = O(m)$.

In \cite{kita2012b, kita2012a}, 
we show that the partial order $\yield$ and 
the generalized canonical partition can be computed in 
$O(nm)$ time if there input a factorizable graph. 
The algorithm is composed of three stages, 
each of which is $O(n)$ times iteration of $O(m)$ time procedure of 
growing alternating trees.  
It first computes the factor-components, 
then computes
$\yield$ and $\gpart{G}$ respectively. 

With the results in this paper, 
we present another $O(nm)$ time algorithm to compute them. 
The upper bound of its time complexity is the same as the known one, 
however  
the factor-components, 
$\yield$, and $\gpart{G}$ 
are here computed simultaneously. 
Thus, it has some possibility of exhibiting a bit more efficiency. 

%
%
%
%
%
%
\begin{theorem}[Micali \& Vazirani~\cite{mv1980}, Vazirani~\cite{vazirani1994}]%
\label{thm:matching}
A maximum matching of a graph can be computed in 
$O(\sqrt{n}m)$ time. 
\end{theorem}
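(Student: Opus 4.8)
The plan is to produce the matching by augmentation, relying on the classical characterization that a matching $M$ is maximum if and only if $G$ admits no $M$-exposed path (an alternating path joining two $M$-exposed vertices). Rather than repairing one such path at a time, I would organize the computation into phases in the Hopcroft--Karp style: within a phase, compute a \emph{maximal} family of pairwise vertex-disjoint $M$-exposed paths all of the current minimum length, and augment $M$ along all of them at once by taking symmetric differences. Two independent quantities must then be bounded: the number of phases, and the cost of a single phase.

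For the phase count, let $M^{*}$ be a maximum matching and $M$ the current one. The components of $M \triangle M^{*}$ that are $M$-exposed paths number exactly $\nu(G) - |M|$ and are pairwise vertex-disjoint; if $\ell$ is the current minimum length of an $M$-exposed path, these disjoint paths occupy at least $\ell(\nu(G)-|M|)$ vertices, so $\nu(G) - |M| \le n/\ell$. A short exchange argument shows that augmenting along a maximal disjoint family of shortest exposed paths strictly increases this minimum length from one phase to the next. Hence after $\sqrt{n}$ phases we have $\ell \ge \sqrt{n}$, leaving at most $\sqrt{n}$ further augmentations; this yields the bound of $O(\sqrt{n})$ phases.

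The difficult part is executing one phase in $O(m)$ time, which in general (non-bipartite) graphs demands a treatment of blossoms, i.e.\ odd alternating cycles that must be handled as if contracted. Here I would follow the Micali--Vazirani search: starting from the $M$-exposed vertices, grow a breadth-first structure that assigns to each vertex both an even and an odd distance label relative to the exposed set, detecting blossoms on the fly and recording them by base and bridge pointers rather than performing literal contractions. A maximal set of disjoint minimum-length exposed paths is then extracted by a double depth-first search that, whenever it enters a blossom, ``opens'' it to recover a valid even traversal through the interior. A standard charging argument, in which each edge and each vertex is touched a constant number of times per phase, gives the $O(m)$ cost, and summing over phases yields the claimed $O(\sqrt{n}\,m)$ total.

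The main obstacle, and what makes this theorem genuinely deep rather than routine, is establishing the correctness of the two-level labelling in the presence of blossoms: one must prove that the even and odd labels really record the shortest even- and odd-length alternating walks from the exposed set, that every minimum-length $M$-exposed path is recoverable by the double depth-first search once blossoms are opened, and that blossom opening always produces a \emph{simple} augmenting path reusing no vertex. This is precisely the point at which the original Micali--Vazirani argument was incomplete; I would therefore invoke Vazirani's later analysis~\cite{vazirani1994}, built on the theory of tenacity and of evenlevel/oddlevel structures, to certify both correctness and the linear per-phase running time.
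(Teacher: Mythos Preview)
Your sketch is a faithful outline of the Micali--Vazirani algorithm and its analysis: the Hopcroft--Karp phase bound via the $\nu(G)-|M|\le n/\ell$ counting, the BFS-style even/odd labelling with implicit blossom handling, the double DFS for extracting a maximal disjoint family of shortest augmenting paths, and the appeal to Vazirani's later tenacity-based correctness proof are all the right ingredients. Nothing in the argument is wrong as a high-level plan, though of course the per-phase $O(m)$ bound and the correctness of the labelling hide substantial detail that you are (appropriately) deferring to the cited references.

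That said, the paper does not prove this theorem at all: it is stated purely as a cited external result and used as a black box inside the complexity analysis of Algorithm~1 (Theorem~\ref{thm:orderalgo}). So there is no ``paper's own proof'' to compare against; your proposal simply supplies an argument where the paper gives none. In that sense what you have written is not a competing approach but an expansion of the citation, and a reasonable one.
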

\begin{theorem}[Edmonds~\cite{edmonds1965}, Tarjan~\cite{tarjan1983}, %
 Gabow \& Tarjan~\cite{gt1985}]\label{thm:ge-alg}
Let $G$ be a graph with $m = {\rm \Omega}(n)$ 
and suppose we are given a perfect matching of $G$. 
Then, $D(G)$, $A(G)$, and $C(G)$ can be computed in 
$O(m)$ time. 
\end{theorem}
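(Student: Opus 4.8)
The plan is to extract all three sets from a single alternating search anchored at the vertices left exposed by the given matching $M$. The starting observation is a characterization of $D(G)$ purely in terms of $M$: a vertex $v$ belongs to $D(G)$ if and only if $v$ is exposed by $M$ or there is an $M$-balanced path from $v$ to some $M$-exposed vertex. The ``if'' direction is the same symmetric-difference argument used in the proof of Proposition~\ref{prop:extreme2path}: if $P$ is such a balanced path to an exposed vertex $u$, then $M \triangle E(P)$ is again a maximum matching and it exposes $v$, so $v \in D(G)$. For the ``only if'' direction I would take a maximum matching $M''$ exposing $v$ and follow the component of $M \triangle M''$ that contains $v$; since $v$ is $M$-covered but $M''$-exposed and both matchings are maximum, this component is an even $M$-alternating path from $v$ to a vertex left exposed by $M$, that is, an $M$-balanced path of the required form. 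Note that when $G$ is factorizable there are no exposed vertices, so this already gives $D(G) = \emptyset$, $A(G) = \emptyset$, and $C(G) = V(G)$ at once; the substance of the statement, and the role of the cited references, lies in the general case.

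Granting the characterization, the first main step is to compute the set of vertices joined to an exposed vertex by a balanced path. I would grow an Edmonds-style alternating forest simultaneously from all exposed roots, labelling each reached vertex \emph{outer} or \emph{inner} according to the parity of its alternating distance from a root. The outer vertices are exactly those reachable by balanced paths, hence the members of $D(G)$; the inner vertices are precisely the neighbours of $D(G)$ lying outside it, which is $A(G) = \Gamma(D(G))$; and the vertices that are never reached constitute $C(G)$. The correctness of this labelling---in particular that shrinking a blossom never discards a vertex of $D(G)$ and never falsely promotes one of $A(G) \cup C(G)$ to outer---is exactly the content of Edmonds' blossom structure theory~\cite{edmonds1965}, and it is what makes the even/odd labels agree with the Gallai--Edmonds sets.

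The remaining and principal difficulty is the $O(m)$ time bound. A direct implementation wastes superlinear time re-scanning edges inside repeatedly shrunk blossoms and relabelling their vertices. The second main step is therefore to organize the search so that each edge is examined a constant number of times and every blossom merge is delegated to a set-union structure maintained on the contracting forest. The merges that occur here form an incremental-tree (static-tree) set-union instance, which the Gabow--Tarjan data structure~\cite{gt1985} processes in linear total time; together with the hypothesis $m = \Omega(n)$, this yields the claimed $O(m)$ bound. I expect this data-structural linearization of the blossom bookkeeping to be the hard part: the alternating-path characterization and the final read-off of $A(G)$ and $C(G)$ from the labels are routine once the forest-growing search and its set-union back end are correctly in place.
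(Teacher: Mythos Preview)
The paper does not give its own proof of this statement: Theorem~\ref{thm:ge-alg} is stated as a known result attributed to Edmonds, Tarjan, and Gabow--Tarjan, and is invoked as a black box in Algorithm~1 (at Line~\ref{line:alt}, to compute $A(G-u)$). There is therefore no proof in the paper to compare yours against.

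Your sketch is a reasonable outline of how the cited result is obtained. You also correctly observe that the hypothesis as literally written --- a \emph{perfect} matching of $G$ --- renders the conclusion trivial, since then $D(G)=\emptyset$, $A(G)=\emptyset$, and $C(G)=V(G)$. The substantive content, and indeed the way the theorem is actually used in the paper (applied to $G-u$, for which one has only the near-perfect matching inherited from $M$), requires the general maximum-matching version; your alternating-forest-plus-Gabow--Tarjan sketch addresses exactly that version, and the high-level plan (characterize $D(G)$ via $M$-balanced paths to exposed vertices, grow a blossom forest from the exposed roots, read off outer/inner/unreached as $D(G)/A(G)/C(G)$, and linearize blossom merges with incremental-tree set union) is the standard one underlying the cited references.
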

\begin{theorem}[Dulmage \& Mendelsohn~%
\cite{dm1958, dm1959, dm1963, murota2000}]\label{thm:dm-alg}
For any bipartite factorizable graph $G$, 
the Dulmage-Mendelsohn decomposition of $G$ 
can be computed in $O(m)$ time. 
\end{theorem}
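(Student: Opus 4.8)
The plan is to reduce the computation to a strongly-connected-component decomposition of an auxiliary digraph, exploiting the balanced-path characterization of $\pardm{A}$ supplied by Proposition~\ref{prop:dm}. Throughout I assume, as in the cited references and in the companion Theorem~\ref{thm:ge-alg}, that a perfect matching $M$ of $G = (A, B; E)$ is provided together with the graph; this is the natural reading, since otherwise even certifying factorizability of a bipartite graph costs more than linear time. First I would orient the edges of $G$: direct every matching edge $ab \in M$ from its $A$-endpoint to its $B$-endpoint, and every non-matching edge $ab \in E \setminus M$ from its $B$-endpoint to its $A$-endpoint, obtaining a digraph $\vec{G}$ on the same vertex set. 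This construction takes $O(m)$ time, and by design every directed path alternates between matching and non-matching edges.

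The correctness rests on matching up directed reachability in $\vec{G}$ with the relation $\pardm{A}$. Because of the chosen orientation, a directed path in $\vec{G}$ from $u \in A$ to $v \in A$ begins with the matching edge at $u$, ends with a non-matching edge at $v$, and uses an even number of edges; hence it is precisely an $M$-balanced path from $u$ to $v$. By Proposition~\ref{prop:dm}, such a path exists if and only if the factor-component $G_u$ containing $u$ satisfies $G_u \pardm{A} G_v$, where $G_v$ is the factor-component containing $v$. Consequently two vertices of $A$ are mutually reachable in $\vec{G}$ exactly when they lie in a common factor-component, and — after checking that each $b \in B$ joins the strongly connected component of its $M$-partner, which holds because within one factor-component $b$ and its partner reach one another — the strongly connected components of $\vec{G}$ coincide with the vertex sets $V(G_i)$ of the DM-components.

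From here I would run Tarjan's algorithm to compute the strongly connected components of $\vec{G}$ in $O(n + m) = O(m)$ time (recall $m = {\rm \Omega}(n)$ for factorizable graphs), which yields the partition of $V(G)$ into DM-components. Contracting each component gives the condensation, a directed acyclic graph whose reachability order is exactly $\pardm{A}$ by the previous paragraph; a topological sort together with a transitive reduction then recovers the covering relation of clause~(ii) of Theorem~\ref{thm:dm}, all in $O(m)$ time since the condensation has at most $O(m)$ arcs. The main obstacle is not the running time but the careful verification that the orientation faithfully encodes the balanced-path relation and that the $B$-vertices fall into the intended components; once Proposition~\ref{prop:dm} is invoked, this reduces to a routine but slightly delicate case analysis distinguishing the two edge types and handling the endpoints of alternating paths.
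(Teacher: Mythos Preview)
The paper does not prove this statement; it is quoted as a classical result of Dulmage and Mendelsohn (see also Murota~\cite{murota2000}), so there is no in-paper argument to compare against. Your sketch is in fact the standard proof found in those references: orient the edges relative to the given matching, compute strongly connected components of the resulting digraph in linear time, and invoke Proposition~\ref{prop:dm} to identify SCCs with DM-components.

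Two small points deserve care. First, with your orientation (matching edges $A\to B$, non-matching edges $B\to A$), a vertex $b\in B$ reaches its $M$-partner $a\in A$ only if the matching edge $ab$ lies on some $M$-alternating cycle, i.e.\ only if $ab$ is not a forced edge. For a $K_2$ factor-component $\{a,b\}$ the edge $ab$ is forced and $b$ cannot reach $a$, so the two vertices land in distinct SCCs; your parenthetical claim that ``$b$ and its partner reach one another'' fails exactly here. The usual remedy is to make matching edges bidirectional, or equivalently to contract each matched pair before running Tarjan's algorithm. Second, the assertion that a transitive reduction of the condensation runs in $O(m)$ time is not justified---transitive reduction of an arbitrary DAG is as hard as transitive closure. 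Fortunately this step is inessential: Algorithm~1 in the paper only reads off the partition into DM-components, and the condensation DAG already represents $\pardm{A}$ via reachability without needing its Hasse diagram.
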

%
%
%
\if0
Given a digraph $D$ and $u,v\in V(D)$,
we say $u\rightarrow v$ if  $u = v$ or there is a dipath from $u$ to $v$.
It can be easily seen that $\rightarrow$ is a partial order.
If we define $u\leftrightarrow v$ as $u\rightarrow v$ and $v\rightarrow u$,
then it is also easily seen that $\leftrightarrow$ is a equivalent relation,
and the equivalenct classes are exactly the 
strongly-connected component of $V(D)$.
Additionally, $\rightarrow$ can be naturally reduced to the partial order on 
the strongly-connected component of $V(D)$.
\fi
%
\begin{proposition}[folklore, see~\cite{murota2000}]\label{prop:stronglyconnected-poset}
Let $D$ be a digraph, and $\mathcal{D}$ be the set of strongly-connected components of $D$. 
For $D_1, D_2\in \mathcal{D}$ 
we say $D_1\rightarrow D_2$ if 
for any $u\in V(D_1)$ and any $v\in V(D_2)$ 
there is a dipath from $u$ to $v$. 
Then, $\rightarrow$ is a partial order on $\mathcal{D}$. 
\end{proposition}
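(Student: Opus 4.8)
The plan is to verify directly that $\rightarrow$ satisfies the three defining axioms of a partial order on $\mathcal{D}$ --- reflexivity, transitivity, and antisymmetry --- exploiting throughout that within a single strongly-connected component every vertex reaches every other vertex, and that the strongly-connected components of $D$ are precisely the equivalence classes of the mutual-reachability relation on $V(D)$. Since the relation is defined with a ``for any'' quantifier over the endpoints, I would first note that this causes no difficulty: the internal strong connectivity of each member of $\mathcal{D}$ is exactly what makes the ``for any'' and ``there exists'' formulations coincide, so the relation is well behaved.

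For reflexivity I would take any $D_1\in\mathcal{D}$ and arbitrary $u,v\in V(D_1)$. If $u=v$ the trivial dipath suffices, adopting the usual convention that a single vertex is a (trivial) dipath; otherwise strong connectivity of $D_1$ supplies a dipath from $u$ to $v$. Hence $D_1\rightarrow D_1$. For transitivity, suppose $D_1\rightarrow D_2$ and $D_2\rightarrow D_3$, and fix arbitrary $u\in V(D_1)$ and $w\in V(D_3)$. Choosing any $v\in V(D_2)$, the hypotheses provide a dipath from $u$ to $v$ and a dipath from $v$ to $w$; their concatenation is a directed walk from $u$ to $w$, which contains a dipath from $u$ to $w$. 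As $u$ and $w$ were arbitrary, $D_1\rightarrow D_3$.

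The one step that genuinely uses the structural property of strongly-connected components is antisymmetry. Suppose $D_1\rightarrow D_2$ and $D_2\rightarrow D_1$, and pick $u\in V(D_1)$ and $v\in V(D_2)$. Then there is a dipath from $u$ to $v$ and one from $v$ to $u$, so $u$ and $v$ are mutually reachable and therefore lie in a common strongly-connected component. Since the members of $\mathcal{D}$ partition $V(D)$, this forces $D_1=D_2$.

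I expect no real obstacle, as the statement is folklore; the only points demanding care are the degenerate cases --- the single-vertex dipath needed for reflexivity when a component is an isolated vertex --- and the appeal, in antisymmetry, to the \emph{maximality} of strongly-connected components, i.e.\ that mutual reachability is exactly the equivalence relation whose classes are the members of $\mathcal{D}$. Alternatively, one could argue more abstractly: vertex reachability is a preorder on $V(D)$, mutual reachability is its induced equivalence relation with classes $\mathcal{D}$, and $\rightarrow$ is precisely the quotient order, which is automatically a partial order; but the direct verification above is shorter and self-contained.
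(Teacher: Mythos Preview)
Your proof is correct. The paper does not actually give its own proof of this proposition: it is stated as folklore with a citation to Murota and left unproved, so there is nothing to compare against beyond noting that your direct verification of reflexivity, transitivity, and antisymmetry is exactly the standard argument one would expect for such a folklore fact.
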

\begin{proposition}[folklore, see~\cite{schrijver2003}]\label{prop:stronglyconnected-alg}
For any digraph $D$, the strongly connected components of $D$ 
can be computed in $O(n + m)$ time. 
\end{proposition}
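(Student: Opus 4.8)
The plan is to exhibit an explicit linear-time procedure---Kosaraju's two-pass depth-first search---and to verify both its correctness and its running time. First I would run a depth-first search over $D$, pushing each vertex onto a stack at the moment its recursive call completes, so that the stack orders $V(D)$ by increasing finish time $f$. Next I would construct the reverse digraph $D^{T}$, obtained by reversing every arc, in a single $O(n+m)$ pass over the adjacency lists. Finally I would repeatedly take the unvisited vertex sitting on top of the stack (the one of largest remaining finish time) and grow a depth-first search tree from it inside $D^{T}$, declaring the vertex set of each such tree to be one strongly connected component, and iterating until the stack is empty.

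For correctness I would work with the reachability order $\rightarrow$ on the strongly connected components furnished by Proposition~\ref{prop:stronglyconnected-poset}. The single fact I would isolate is the following finish-time claim: if $C \rightarrow C'$ with $C \neq C'$, then $\max_{x\in V(C)} f(x) > \max_{y\in V(C')} f(y)$. Granting it, the globally last-finishing vertex lies in a \emph{source} component $C^{*}$, meaning no $C' \neq C^{*}$ satisfies $C' \rightarrow C^{*}$ (for if some $C'$ reached $C^{*}$, the claim would force $\max f$ over $C'$ to exceed the global maximum). Reversing the arcs turns such a source into a component of $D^{T}$ with no outgoing inter-component arcs, so the $D^{T}$-search started at $C^{*}$ stays inside $C^{*}$ and, since $C^{*}$ is strongly connected, reaches all of it. Peeling $C^{*}$ off and repeating then discovers the components one search tree at a time, in a linear extension of the order $\rightarrow$ of Proposition~\ref{prop:stronglyconnected-poset}.

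The main obstacle is the finish-time claim itself, which I would prove by a case split on which of $C, C'$ the first-pass search enters earliest. If some vertex of $C$ is discovered before every vertex of $C'$, then, because every vertex of $C'$ is reachable from it, the standard reachability property of depth-first search makes all of $C'$ into descendants that finish before that $C$-vertex does, so $\max f$ over $C$ dominates. If instead a vertex of $C'$ is discovered first, then, since $\rightarrow$ is a partial order and $C \neq C'$, we have that $C'$ does not reach $C$; hence the search launched within $C'$ cannot escape to $C$ and finishes every vertex of $C'$ before any vertex of $C$ is ever touched, again giving the inequality. Either branch yields the claim.

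The running time is then routine and I would dispatch it quickly: the first depth-first search costs $O(n+m)$; building $D^{T}$ costs $O(n+m)$; and the second phase is a collection of depth-first searches in $D^{T}$ that jointly visit each vertex once and traverse each arc once, hence $O(n+m)$ as well. Summing the three phases delivers the claimed $O(n+m)$ bound, completing the proof.
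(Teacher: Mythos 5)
Your proof is correct, but note that the paper itself gives no proof of Proposition~\ref{prop:stronglyconnected-alg}: it is stated as folklore with a pointer to Schrijver's book, where the linear-time bound is usually credited to Tarjan's single-pass, lowlink-based depth-first search rather than the two-pass method. You instead develop Kosaraju's algorithm in full, which is a perfectly legitimate (arguably more transparent) route, and your use of the order $\rightarrow$ of Proposition~\ref{prop:stronglyconnected-poset} to organize the correctness argument fits the paper's framework nicely. The one place where you assert more than you justify is the first branch of your finish-time claim: the ``standard reachability property of depth-first search'' is really the white-path theorem, which requires that, at the moment the first $C$-vertex $u$ is discovered, each $u$--$C'$ path consist entirely of undiscovered vertices---not merely that the vertices of $C'$ themselves are undiscovered, since intermediate components on the path could a priori contain already-processed vertices. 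This is repairable in two standard ways: either argue directly that no vertex on such a path can be gray (a gray vertex at time $d(u)$ is an ancestor of $u$, and lying on a path leaving $u$ would place it in the same strongly connected component as $u$) or black (a black vertex forces, arc by arc, every later vertex of the path, including one of $C'$, to have been discovered before $u$, a contradiction); or, more simply, prove the finish-time inequality only for a single arc between distinct components and chain it along a path in the condensation. With that step tightened, your argument is complete, and the $O(n+m)$ accounting is exactly as routine as you say.
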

Below is the new algorithm, Algorithm~1:  

\begin{algorithmic}[1]
\REQUIRE a factorizable graph $G$
\ENSURE the generalized canonical partition $\gpart{G}$
and the digraph $Aux(G)$ representing $(\mathcal{G}(G), \yield)$ 
\STATE \label{line:matching} compute a perfect matching $M$ of $G$;
\STATE \label{line:initialize} 
$U := V(G)$; initialize $f:V(G) \rightarrow \{0, 1\}$ by $0$;
\STATE \label{line:initializea}$A := \emptyset$; $\gpart{G} := \emptyset$; 
\WHILE{$U\neq \emptyset$} \label{line:while} %
\STATE \label{line:chooseu}choose $u\in U$;
\STATE \label{line:alt} compute $X:= A(G-u)\cup \{u\}$; 
\STATE \label{line:dm} compute  the DM-decomposition of $\hxg{X}{G}$;
\FORALL{DM-component $D$ of $\hxg{X}{G}$} \label{line:forall_dm} 
\STATE \label{line:choosev}
let $S := X\cap V(D)$;
choose arbitrary $v\in S$;  
\IF{$f(v) = 0$}\label{line:testv} 
\STATE \label{line:part} 
$\gpart{G} := \gpart{G}\cup \{S\}$;
\STATE \label{line:hatd}let $\what{D}\subseteq G$ be the expansion of $D$; 
\FORALL{$x\in S$} \label{line:forallx}
\FORALL{$y\in V(\what{D})\setminus X$} \label{line:forally}
\STATE \label{line:updatea}$A := A\cup \{(x, y)\}$; 
\ENDFOR
\STATE \label{line:updateu}$U:= U\setminus \{x\}$; $f(x) := 1$;
\ENDFOR \label{line:endforall}
\ENDIF \label{line:endif} 
\ENDFOR \label{line:endfor_dm} 
\ENDWHILE \label{line:endwhile} %
\STATE \label{line:outputp} output $\gpart{G}$; 
\STATE \label{line:outputaux}$Aux(G):= (V(G), A )$;
decompose $Aux(G)$ into its strongly-connected components 
and output it;  STOP. 
\end{algorithmic}
\if0
Given a digraph $D$ and $u,v\in V(D)$,
we say $u\right v$ if  $u = v$ or there is a dipath from $u$ to $v$.
It can be easily seen that $\rightarrow$ is a partial order.
If we define $u\leftrightarrow v$ as $u\rightarrow v$ and $v\rightarrow u$,
then it is also easily seen that $\leftrightarrow$ is a equivalent relation,
and the equivalenct classes are exactly the 
strongly-connected component of $V(D)$.
Additionally, $\rightarrow$ can be reduced to the partial order on 
the strongly-connected comonent of $V(D)$.
\fi
\begin{proposition}\label{prop:alg}
While Algorithm~1 is running, 
\renewcommand{\labelenumi}{\theenumi}
\renewcommand{\labelenumi}{{\rm \theenumi}}
\renewcommand{\theenumi}{(\roman{enumi})}
\begin{enumerate}
\item \label{item:alt}
$X = A(G-u)\cup \{u\}$ of Line~\ref{line:alt} is an odd-maximal barrier of $G$, 
\item \label{item:choosev}
$S$ defined at Line~\ref{line:choosev} coincides with a member of $\gpart{G}$, and 
\item \label{item:forally}
$V(\what{D})\setminus X$ at Line~\ref{line:forally} coincides with $\vcoup{S}$%
\footnote{Given $H\in\mathcal{G}(G)$ and $S\in\pargpart{G}{H}$, 
we denote $\vupstar{H}\setminus \vupstar{S}$ as $\vcoup{S}$}.  
\end{enumerate}
\end{proposition}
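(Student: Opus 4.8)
The plan is to treat the three assertions in order, observing that (i) is a self-contained consequence of the Gallai--Edmonds structure theorem together with a counting argument via the Berge formula, whereas (ii) and (iii) reduce to bookkeeping applications of Theorem~\ref{thm:barrier2up} once (i) is established.

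For (i), I would first record that $u\notin A(G-u)$, since $A(G-u)\subseteq V(G-u) = V(G)\setminus\{u\}$; hence $|X| = |A(G-u)| + 1$ and $G-X = (G-u) - A(G-u)$, so $q_G(X) = q_{G-u}(A(G-u))$. By the Gallai--Edmonds structure theorem, $A(G-u)$ is a barrier of $G-u$, and the odd components of $(G-u) - A(G-u)$ are exactly the factor-critical components of $(G-u)[D(G-u)]$. Because $G$ is factorizable and $|V(G-u)|$ is odd, we have $\nu(G-u) = |V(G)|/2 - 1$, so the Berge formula applied to the barrier $A(G-u)$ gives $q_{G-u}(A(G-u)) - |A(G-u)| = 1$, i.e. $q_{G-u}(A(G-u)) = |A(G-u)|+1$. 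Therefore $q_G(X) - |X| = (|A(G-u)|+1) - (|A(G-u)|+1) = 0 = |V(G)| - 2\nu(G)$, which shows that $X$ attains the Berge maximum and is a barrier of $G$. Finally, every odd component of $G-X$ is factor-critical by the remark above, so $X$ is odd-maximal by Proposition~\ref{prop:odd-maximal}. This proves that $X$ at Line~\ref{line:alt} is an odd-maximal barrier of $G$.

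For (ii), I would apply Theorem~\ref{thm:barrier2up} to the odd-maximal barrier $X$ supplied by (i). That theorem partitions $X\cup D_X$ into the expansions $\what{D}_1,\ldots,\what{D}_k$ of the DM-components of $\hxg{X}{G}$ and asserts that each $X\cap V(\what{D}_i)$ coincides with a member of $\gpart{G}$, say belonging to $\pargpart{G}{H_i}$. The DM-component $D$ at Line~\ref{line:choosev} is one of these, say $D = D_i$ with expansion $\what{D} = \what{D}_i$; since expanding a DM-component leaves its vertices in $X$ untouched and only replaces contracted vertices by the odd components they represent, the odd components lie in $D_X$ and so $X\cap V(D) = X\cap V(\what{D}_i)$. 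Hence $S = X\cap V(D)$ coincides with the member $X\cap V(\what{D}_i)$ of $\gpart{G}$, which proves (ii); I write $H$ for the factor-component with $S\in\pargpart{G}{H}$.

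For (iii), Theorem~\ref{thm:barrier2up} additionally gives $V(\what{D}) = \vupstar{H}\setminus\vup{S}$. The remaining point is a short set computation: since $S = X\cap V(\what{D})$ by (ii), deleting $X$ from $V(\what{D})$ removes exactly $S$, so $V(\what{D})\setminus X = (\vupstar{H}\setminus\vup{S})\setminus S$. As $\vupstar{S} = \vup{S}\,\dot{\cup}\,S$ (the strict upper bounds constituting $\vup{S}$ being factor-components strictly above $H$, hence disjoint from $V(H)\supseteq S$), this equals $\vupstar{H}\setminus\vupstar{S} = \vcoup{S}$, which is precisely the quantity $V(\what{D})\setminus X$ at Line~\ref{line:forally}. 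I expect the only genuine obstacle to be assertion (i)---in particular, making precise the transfer of the barrier and factor-criticality structure from $G-u$ to $G$ and the accompanying Berge-formula count---while (ii) and (iii) amount to identifying the algorithm's objects $S$ and $\what{D}$ with the corresponding $S_i$ and $\what{D}_i$ of Theorem~\ref{thm:barrier2up} and a routine manipulation of the $\vup{\cdot}$ and $\vupstar{\cdot}$ sets.
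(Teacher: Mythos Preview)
Your proposal is correct and follows essentially the same approach as the paper: the paper's proof is a two-line sketch saying that (i) follows by ``a simple counting argument'' and that (ii) and (iii) then follow from Theorem~\ref{thm:barrier2up}, and you have faithfully expanded exactly this outline, supplying the Gallai--Edmonds/Berge count for (i) and the identification of $S$ and $\what{D}$ with the objects of Theorem~\ref{thm:barrier2up} for (ii) and (iii).
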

\begin{proof}
The statement \ref{item:alt} follows by a simple counting argument. 
Therefore, \ref{item:choosev} and \ref{item:forally} 
follows by Theorem~\ref{thm:barrier2up}. 
\qed
\end{proof}
\begin{lemma}\label{lem:necessity}
Let $G$ be a factorizable graph and $Aux(G) =(V(G), A)$ be the digraph 
obtained by inputting $G$ to Algorithm~1.
Let $H_1, H_2\in\mathcal{G}(G)$, $u\in V(H_1)$, and $v\in V(H_2)$. 
\renewcommand{\labelenumi}{\theenumi}
\renewcommand{\labelenumi}{{\rm \theenumi}}
\renewcommand{\theenumi}{(\roman{enumi})}
\begin{enumerate}
\item \label{item:edge} If $(u, v)\in A$, then $H_1\yield H_2$.
\item \label{item:path} If there exists a dipath from $u$ to $v$ in $Aux(G)$, then $H_1\yield H_2$.
\end{enumerate}
\end{lemma}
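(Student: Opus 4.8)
The plan is to prove Lemma~\ref{lem:necessity} in two steps: first establish part~\ref{item:edge}, which concerns a single arc of $Aux(G)$, and then derive part~\ref{item:path} from it by a straightforward induction along a dipath, using that $\yield$ is a partial order (Theorem~\ref{thm:order}) and hence transitive.

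For part~\ref{item:edge}, I would trace how arcs get added to $A$. By inspecting Line~\ref{line:updatea}, every arc $(x,y)$ is created inside the inner loops with $x\in S$ and $y\in V(\what{D})\setminus X$, where $S$ and $\what{D}$ are as defined at Lines~\ref{line:choosev} and~\ref{line:hatd}. So if $(u,v)\in A$, then $u$ plays the role of $x$ and $v$ the role of $y$ for some iteration; that is, $u\in S$ and $v\in V(\what{D})\setminus X$ for a DM-component $D$ of $\hxg{X}{G}$ whose expansion is $\what{D}$. By Proposition~\ref{prop:alg}, $S$ coincides with a member of $\gpart{G}$ and $V(\what{D})\setminus X$ coincides with $\vcoup{S}=\vupstar{H_1}\setminus\vupstar{S}$; here $H_1$ is precisely the factor-component with $S\in\pargpart{G}{H_1}$, so $u\in S\subseteq V(H_1)$ is consistent with the hypothesis $u\in V(H_1)$. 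Now $v\in \vupstar{H_1}\setminus\vupstar{S}\subseteq \vparup{G}{H_1}$, so $v$ lies in a strict upper bound of $H_1$ in $(\mathcal{G}(G),\yield)$; that is, the factor-component $H_2$ containing $v$ satisfies $H_1\yield H_2$. This is exactly the desired conclusion. The one point to handle carefully is matching the indexing: the hypothesis names $H_1$ as the component containing $u$ and $H_2$ as the one containing $v$, and I must confirm these agree with the $H_1$ extracted from $S\in\pargpart{G}{H_1}$ and with the component of $v$ inside $\vparup{G}{S}$.

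For part~\ref{item:path}, I would argue by induction on the length of a dipath from $u$ to $v$. If the length is zero then $u=v$, so $H_1=H_2$ and $H_1\yield H_2$ by reflexivity of the partial order. Otherwise write the dipath as $u=w_0, w_1,\ldots, w_r=v$ with each $(w_{i},w_{i+1})\in A$. For each consecutive pair, part~\ref{item:edge} gives $H_{w_i}\yield H_{w_{i+1}}$, where $H_{w}$ denotes the factor-component containing $w$. Chaining these and using transitivity of $\yield$ yields $H_1 = H_{w_0}\yield H_{w_r} = H_2$, as required.

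The main obstacle is not conceptual but bookkeeping: the whole content of part~\ref{item:edge} is repackaged inside Proposition~\ref{prop:alg}, so the real work is to read off the loop structure of Algorithm~1 correctly and to verify that the component $H_1$ named in the statement is the same component that appears through $S\in\pargpart{G}{H_1}$ and through the definition of $\vparup{G}{S}$. Once that identification is pinned down, the inductive step for part~\ref{item:path} is immediate from transitivity, so I expect no genuine difficulty there.
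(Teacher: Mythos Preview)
Your proposal is correct and follows the paper's own proof almost verbatim: part~\ref{item:edge} by reading off Line~\ref{line:updatea} together with Proposition~\ref{prop:alg}, and part~\ref{item:path} by transitivity of $\yield$ (Theorem~\ref{thm:order}). One harmless slip: the inclusion $\vupstar{H_1}\setminus\vupstar{S}\subseteq \vparup{G}{H_1}$ is false since $V(H_1)\setminus S$ is also in $\vcoup{S}$, but then $H_2=H_1$ and reflexivity still gives $H_1\yield H_2$, so your conclusion stands.
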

\begin{proof}
The arc $(u, v)$ is added to $A$ only at Line~\ref{line:updatea} if 
$u\in X\cap V(\what{D})$ and $v\in V(\what{D})\setminus X$. 
Thus \ref{item:edge} follows by Proposition~\ref{prop:alg}.
Hence \ref{item:path} follows by the transitivity of $\yield$. 
\qed
\end{proof}
\begin{lemma}\label{lem:sufficiency}
Let $G$ be a factorizable graph and $Aux(G) =( V(G), A)$ be the digraph 
obtained by inputting $G$ to Algorithm~1.
Let $H_1, H_2\in\mathcal{G}(G)$ be such that $H_1\yield H_2$.
Then, for any $u\in V(H_1)$ and $v\in V(H_2)$,
there is a dipath from $u$ to $v$ in $Aux(G)$.
\end{lemma}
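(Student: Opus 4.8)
The plan is to read off the exact arc set that Algorithm~1 deposits into $Aux(G)$ and then reduce the lemma to a purely structural reachability statement about those arcs. A vertex $x\in V(G)$ is removed from $U$ only at Line~\ref{line:updateu}, and this happens exactly when the class $S\ni x$ is processed, at which moment Line~\ref{line:updatea} has already inserted every arc $(x,y)$ with $y\in V(\what{D})\setminus X$. Since the while-loop runs until $U=\emptyset$, every vertex, and hence (by the flag $f$) every class, is processed exactly once; so by Proposition~\ref{prop:alg}\ref{item:choosev} and \ref{item:forally} the arc set of $Aux(G)$ is precisely
\[
A=\{(x,y): S\in\gpart{G},\ x\in S,\ y\in\vcoup{S}\},
\]
where, for the factor-component $H$ with $S\in\pargpart{G}{H}$, we have $\vcoup{S}=\vupstar{H}\setminus\vupstar{S}$.

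The key structural fact I would establish first is the disjoint decomposition
\[
\vupstar{H}=\dot{\bigcup}_{T\in\pargpart{G}{H}}\vupstar{T},
\]
valid for every $H\in\mathcal{G}(G)$. This follows by combining the identity $\dot{\bigcup}_{T\in\pargpart{G}{H}}\up{T}=\up{H}$ recalled in Section~\ref{sec:canonical} with the fact that $\pargpart{G}{H}$ partitions $V(H)$: since $\vupstar{T}=T\cup\vup{T}$ and the two families $\{T\}_T$ and $\{\vup{T}\}_T$ are each pairwise disjoint and mutually disjoint ($T\subseteq V(H)$ while $\vup{T}\subseteq\vup{H}$), the sets $\vupstar{T}$ are pairwise disjoint with union $\vupstar{H}$. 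An immediate consequence is that for $S\in\pargpart{G}{H}$,
\[
\vcoup{S}=\vupstar{H}\setminus\vupstar{S}=\dot{\bigcup}_{T\in\pargpart{G}{H},\,T\neq S}\vupstar{T};
\]
that is, from a vertex of $S$ the algorithm installs an arc to every vertex lying in the ``territory'' $\vupstar{T}$ of any \emph{other} class $T$, but deliberately to no vertex of $\vupstar{S}$ itself.

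With this in hand the routing argument is short. Fix $H_1\yield H_2$, $u\in V(H_1)$, and $v\in V(H_2)$. Since $H_1\yield H_2$ gives $V(H_2)\subseteq\vupstar{H_1}$, the decomposition above places $v$ in a unique region $\vupstar{T_v}$ with $T_v\in\pargpart{G}{H_1}$; let $S_u\in\pargpart{G}{H_1}$ be the class containing $u$. If $T_v\neq S_u$, then $v\in\vupstar{T_v}\subseteq\vcoup{S_u}$, so $(u,v)\in A$ is already a dipath. If $T_v=S_u=:S$, I would route through a third class: because $H_1$ is a nonempty factor-component, $|\pargpart{G}{H_1}|\ge 2$ (the vertex set of a nonempty factorizable graph is never a barrier, so the canonical partition of $H_1$, and a fortiori its refinement $\pargpart{G}{H_1}$, has at least two classes), so there exist $T\in\pargpart{G}{H_1}$ with $T\neq S$ and a vertex $t\in T$; then $t\in T\subseteq\vcoup{S}$ and $v\in\vupstar{S}\subseteq\vcoup{T}$, giving the dipath $u\to t\to v$. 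This covers $H_1=H_2$ as well, since there too $v\in V(H_1)\subseteq\vupstar{H_1}$.

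I expect the main obstacle to be conceptual rather than computational: one must notice that the arcs emanating from $S$ skip exactly $\vupstar{S}$, so a target $v$ sitting above $u$'s own class is unreachable in a single step and forces the two-step detour through a sibling class $T$. Making this airtight rests entirely on the disjoint decomposition of $\vupstar{H_1}$ and on the guarantee $|\pargpart{G}{H_1}|\ge 2$; these are the load-bearing steps. The converse direction---that $H_1\yield H_2$ is necessary for a dipath---is already supplied by Lemma~\ref{lem:necessity}, so the two lemmas together will identify $Aux(G)$ with $(\mathcal{G}(G),\yield)$.
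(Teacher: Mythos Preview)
Your proof is correct and follows essentially the same route as the paper's: both split on whether $v\in\vcoup{S_u}$ or $v\in\vupstar{S_u}$, taking a direct arc in the first case and a two-step detour through a sibling class $T\in\pargpart{G}{H_1}\setminus\{S_u\}$ in the second. Your write-up is in fact more careful than the paper's, since you justify the exact arc set of $Aux(G)$, the disjoint decomposition $\vupstar{H_1}=\dot{\bigcup}_{T}\vupstar{T}$, and the existence of a second class $T$ (via $|\pargpart{G}{H_1}|\ge 2$), all of which the paper leaves implicit.
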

\begin{proof}
Let $S\in\pargpart{G}{H_1}$ be such that $u\in S$. 
First suppose that $v\in \vcoup{S}$. 
Then, $(u, v)$ is added to $A$ at Line~\ref{line:updatea} 
when $X\cap V(D)$ of Line~\ref{line:forallx} coincides with $S$, 
which surely occurs by Proposition~\ref{prop:alg}. 
Hence, the claim holds for this case. 
 
Now suppose the other case that  $v\in \vupstar{S}$. 
Take $T\in \pargpart{G}{H_1}\setminus \{S\}$ and $w\in T$ arbitrarily. 
The arc $(u, w)$ is added to $A$ at Line~\ref{line:updatea}
 when $S$ coincides with $X \cap V(D)$ of Line~\ref{line:forallx},
so is the arc $(w, v)$ when $T$ coincides with $X\cap V(D)$. 
Therefore the dipath $uw + wv$ satisfies the claim for this case, 
and we are done. 
\if0
Let $u \in S \in \pargpart{G}{H_1}$.
If $v \not \in V(\up{S})\cup S$, then 
$uv$ is added to $A$ when 
$X\cap V(D)$ of Line~\ref{line:forallx} conincides to $S$.
Let   $v \not \in V(\up{S})\cup S $.
Let $w\in T$  where $T \in \pargpart{G}{H}\setminus \{S\}$.
The arc $uw$ is added to $A$ when $S$ coincides to $X \cap V(D)$,
so does the arc $wv$ when $T$ coicides to $X\cap V(D)$,
at Line~\ref{line:forallx} of Algorithm~1.
Therefore there is a dipath $uw + wv$ in $D$. 
\fi
\qed
\end{proof}
\if0
\begin{lemma}\label{lem:iff}
Let $G$ be a factorizable graph and $Aux(G) =( V(G), A)$ be the digraph 
obtaned by Algorithm~1 by input $G$.
Let $H_1, H_2\in\mathcal{G}(G)$.
Then $H_1\yield H_2$ holds if and only if there is a dipath 
from any $u\in V(H_1)$ and $v\in V(H_2)$.
\end{lemma}
\begin{proof}
The sufficiency follows by Lemma~\ref{lem:sufficiency}.
The necessity follows by Lemma~\ref{lem:necessity}. 
\qed
\end{proof}
\fi
%
%
%
\begin{theorem}\label{thm:iff}
Let $G$ be a factorizable graph and $Aux(G) =( V(G), A)$ be the digraph 
obtained by inputting $G$ to Algorithm~1. 
Then, $H\in \mathcal{G}(G)$ holds if and only if 
there is a strongly-connected component $D$ of $Aux(G)$ 
with $V(H) = V(D)$. 
Additionally, 
for any $H_1, H_2\in \mathcal{G}(G)$, 
$H_1\yield H_2$ holds if and only if 
$D_1\rightarrow D_2$, 
where $D_i$ is the strongly-connected component of $Aux(G)$ 
with $V(H_i) = V(D_i)$, for each $i = 1, 2$. 
\end{theorem}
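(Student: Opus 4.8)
The plan is to reduce both assertions to a single reachability equivalence already established in the two preceding lemmas, and then to read off the conclusions from the antisymmetry of $\yield$ and from the definition of the order $\rightarrow$ on strongly-connected components.

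First I would record the key fact: for any $u, v\in V(G)$, lying in factor-components $H_u, H_v\in\mathcal{G}(G)$ respectively, there is a dipath from $u$ to $v$ in $Aux(G)$ if and only if $H_u\yield H_v$. The ``if'' direction is precisely Lemma~\ref{lem:sufficiency}, while the ``only if'' direction is Lemma~\ref{lem:necessity}~(\ref{item:path}). Here I use that every vertex of $G$ lies in a unique factor-component, since $\mathcal{G}(G)$ partitions $V(G)$.

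For the first assertion I would then characterize when two vertices share a strongly-connected component. By definition, $u$ and $v$ lie in the same strongly-connected component of $Aux(G)$ exactly when there are dipaths both from $u$ to $v$ and from $v$ to $u$; by the reachability equivalence this occurs if and only if $H_u\yield H_v$ and $H_v\yield H_u$. Since $\yield$ is a partial order by Theorem~\ref{thm:order}, its antisymmetry forces $H_u = H_v$. Conversely, if $H_u = H_v =: H$, then $H\yield H$ by reflexivity, so Lemma~\ref{lem:sufficiency} supplies dipaths in both directions and $u, v$ share a strongly-connected component. Hence the strongly-connected components of $Aux(G)$ are exactly the vertex sets of the factor-components, which is the claimed correspondence $V(H) = V(D)$.

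Finally, for the order statement I would invoke the definition of $\rightarrow$ from Proposition~\ref{prop:stronglyconnected-poset}, namely that $D_1\rightarrow D_2$ means every vertex of $D_1$ reaches every vertex of $D_2$. Under the identification $V(H_i) = V(D_i)$, the reachability equivalence converts this directly into $H_1\yield H_2$: one direction follows by applying Lemma~\ref{lem:necessity}~(\ref{item:path}) to a single pair, and the reverse by applying Lemma~\ref{lem:sufficiency} to every pair. I do not anticipate a genuine obstacle, as the combinatorial substance has been front-loaded into Lemmas~\ref{lem:necessity} and~\ref{lem:sufficiency}; the only point demanding care is confirming that the antisymmetry of $\yield$ is what collapses mutual reachability into equality of factor-components, so that no strongly-connected component straddles two distinct factor-components nor splits a single one.
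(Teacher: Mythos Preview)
Your proposal is correct and follows essentially the same approach as the paper: both reduce the theorem to the reachability equivalence obtained by combining Lemmas~\ref{lem:necessity} and~\ref{lem:sufficiency}, and then read off the conclusions via Proposition~\ref{prop:stronglyconnected-poset}. You are simply more explicit than the paper in spelling out how antisymmetry of $\yield$ (Theorem~\ref{thm:order}) forces the strongly-connected components to coincide with the factor-components, whereas the paper compresses this into ``we are done by Proposition~\ref{prop:stronglyconnected-poset}.''
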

\begin{proof}
Combining Lemmas~\ref{lem:necessity} and \ref{lem:sufficiency}, 
we immediately obtain the following claim:  
\begin{cclaim}
$H_1 \yield H_2$ holds if and only if 
for any $u\in V(H_1)$ and any $v\in V(H_2)$ 
there is a dipath from $u$ to $v$ in $Aux(G)$. 
\end{cclaim}
Therefore, we are done by Proposition~\ref{prop:stronglyconnected-poset}. 
\qed
\end{proof}
\begin{theorem}~\label{thm:orderalgo}
Given a factorizable graph $G$, 
the poset $(\mathcal{G}(G), \yield)$ 
and the generalized canonical partition $\gpart{G}$
can be computed in $O(nm)$ time by Algorithm~1. 
\end{theorem}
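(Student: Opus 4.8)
The plan is to separate correctness from running time, since correctness is already in hand. Theorem~\ref{thm:iff} shows that the strongly-connected components of the output digraph $Aux(G)$, together with the induced order $\rightarrow$, reproduce the poset $(\mathcal{G}(G), \yield)$, while Proposition~\ref{prop:alg}(\ref{item:choosev}) shows that every set $S$ placed into $\gpart{G}$ at Line~\ref{line:part} is a genuine member of the canonical partition. What remains is to verify that each member is emitted exactly once and that the whole computation fits in $O(nm)$ time; I would handle both by tracking the bookkeeping invariant governing $U$ and $f$.

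First I would establish the invariant that, throughout the execution, $x\in U$ holds if and only if $f(x)=0$: it is true after Lines~\ref{line:initialize}--\ref{line:initializea}, and is preserved because $U$ and $f$ are updated only jointly at Line~\ref{line:updateu}. Hence the vertex $u$ chosen at Line~\ref{line:chooseu} satisfies $f(u)=0$. Since $u$ lies in the set $S=X\cap V(D)$ of the DM-component $D$ of $\hxg{X}{G}$ containing it, and every vertex of such an $S$ is marked together at Line~\ref{line:updateu}, the guard at Line~\ref{line:testv} succeeds for this component, so $u$ is removed from $U$ during that pass. Thus each pass of the while-loop decreases $|U|$ by at least one, bounding the number of iterations by $n$. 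The same marking shows that once an $S\in\gpart{G}$ has been processed all its vertices have $f=1$, so Line~\ref{line:testv} forbids reprocessing; combined with termination at $U=\emptyset$, every member of $\gpart{G}$ is output exactly once.

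Next I would bound the cost of a single iteration. At Line~\ref{line:alt}, a maximum (near-perfect) matching of $G-u$ is read off from $M$ in $O(m)$ time, so by Theorem~\ref{thm:ge-alg} the set $A(G-u)$, and hence $X$, is obtained in $O(m)$ time. At Line~\ref{line:dm}, the graph $\hxg{X}{G}$ is built together with its perfect matching $M\cap\delta(X)$ supplied by Proposition~\ref{prop:projection}, and its DM-decomposition is computed in $O(m)$ time by Theorem~\ref{thm:dm-alg}. In the loop of Lines~\ref{line:forall_dm}--\ref{line:endfor_dm}, setting aside the arc insertions, each DM-component is touched once, and the expansions $\what{D}$ built at Line~\ref{line:hatd} are vertex-disjoint and partition $X\cup D_X$ by Proposition~\ref{prop:dm2partition}, so this non-arc work is $O(m)$ per iteration. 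Over the at most $n$ iterations it contributes $O(nm)$.

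Finally I would aggregate the arc insertions globally rather than per iteration. Since each $S\in\gpart{G}$ is processed exactly once and there $V(\what{D})\setminus X=\vcoup{S}$ by Proposition~\ref{prop:alg}(\ref{item:forally}), the arcs added at Line~\ref{line:updatea} number $\sum_{S\in\gpart{G}}|S|\cdot|\vcoup{S}|$. As the members of $\gpart{G}$ partition $V(G)$ we have $\sum_{S}|S|=n$, and $|\vcoup{S}|\le n$ for every $S$, so this total is at most $n^{2}$; thus $Aux(G)$ has $O(n^{2})$ arcs. The perfect matching at Line~\ref{line:matching} costs $O(\sqrt{n}\,m)$ by Theorem~\ref{thm:matching}, and the strong-component decomposition at Line~\ref{line:outputaux} costs $O(n+n^{2})$ by Proposition~\ref{prop:stronglyconnected-alg}. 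Using $m=\Omega(n)$, each term---$O(\sqrt{n}\,m)$, $O(nm)$, and $O(n^{2})$---is $O(nm)$, which gives the bound. The hard part will not be any single cost estimate but the amortized accounting: one must see that the arc-insertion work cannot be charged iteration by iteration (where it might appear to be $\Theta(n\cdot n)$ each time) but only once per canonical class, so that the global sum telescopes against $\sum_{S}|S|=n$ to yield $O(n^{2})$.
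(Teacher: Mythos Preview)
Your proof is correct and follows essentially the same line as the paper's: correctness via Proposition~\ref{prop:alg} and Theorem~\ref{thm:iff}, an $O(n)$ bound on while-loop iterations each costing $O(m)$ for Lines~\ref{line:alt}--\ref{line:dm}, and a global $O(n^2)$ bound on the arc insertions and final strong-component step. Your version is in fact more careful than the paper's terse argument, making explicit the $U$/$f$ invariant, the ``exactly once per $S$'' guarantee, and the amortized arc-count $\sum_{S}|S|\cdot|\vcoup{S}|\le n^{2}$ that the paper only asserts.
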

\begin{proof}
The correctness  follows by Proposition~\ref{prop:alg} and Theorem~\ref{thm:iff}. 

Hereafter we prove the complexity.
Line~\ref{line:matching} costs $O(\sqrt{n}m)$ time by Theorem~\ref{thm:matching}. 
Line~\ref{line:initialize} costs $O(n)$ time,
 and Line~\ref{line:initializea} costs $O(1)$ time.
Lines~\ref{line:while} to \ref{line:dm} cost $O(m)$ time 
per each iteration of the while-loop in Line~\ref{line:while}. 
As the while-loop in Line~\ref{line:while} is repeated $O(n)$ times, 
they cost $O(nm)$ time  over the whole algorithm.

Each operations in Lines~\ref{line:forall_dm} to \ref{line:testv} 
costs $O(1)$ time per iteration, 
and they are iterated $O(n^2)$ time over the whole computation;  
therefore, they cost $O(n^2)$ time. 

Note that $f(v) = 0$ at Line~\ref{line:testv} holds true 
for at most $n$ times. 
Therefore, Lines~\ref{line:part} and \ref{line:hatd} cost $O(n)$ time. 
The number of repetition of Lines~\ref{line:forallx} to \ref{line:endif} 
is bounded by $|A| = O(n^2)$. 
Therefore, the operations there costs $O(n^2)$ over the algorithm.  
\if0
The for-loop in Line~\ref{line:forall_dm} is repeated 
$|\gpart{G}|= O(n)$ times over the whole algorithm.
The operations in Lines~\ref{line:forall_dm} to \ref{line:hatd}
costs $O(1)$ time per iteration, thus 
costs $O(n)$ time over the whole computation.
The operation in Line~\ref{line:updatea} is done 
$O(n^2)$ times over the whole algorithm,
and Line~\ref{line:updateu} 
costs $O(n)$ time over the whole algorithm.
Note that $|A| = O(n^2)$ at Line~\ref{line:outputaux},
because there is no self-loop or mutliple arc in $Aux(G)$.
Thus, with Proposition~\ref{prop:stronglyconnected-alg}, 
Line~\ref{line:outputaux} costs $O(n^2)$ time.
Thus, Algorithm~1 
can be done in $O(nm)$ time. 
\fi
\qed
\end{proof}

\bibliographystyle{splncs}
\bibliography{barriers.bib}

\newpage
\section*{Appendix: Backgrounds on Odd-maximal Barriers}
Here we are going to explain more details on odd-maximal barriers 
which are omitted in Section~\ref{sec:aim}. 
Readers familiar with matching theory might skip this section. 
\subsection*{Maximal Barriers vs. Odd-maximal Barriers}
As we mention in Section~\ref{sec:aim}, 
for elementary graphs, the notion of maximal barriers 
and the notion of odd-maximal barriers are equivalent. 
This fact is easy to see using known properties; 
we are going to show it in the following.  
The next two propositions are to see Proposition~\ref{prop:elemoddmaximal}: 
%
\begin{proposition}[see \cite{lp1986} or \cite{kiraly1998}]%
\label{prop:oddmaximal2maximal}
Let $G$ be a graph and  $X\subseteq V(G)$ be an odd-maximal barrier of $G$. 
Then, $X$ is a maximal barrier if and only if 
$C_X = \emptyset$. 
\end{proposition}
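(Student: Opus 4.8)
The plan is to prove the two directions separately. For the forward direction (if $X$ is a maximal barrier then $C_X = \emptyset$) I would argue by contradiction using a parity count on the Berge formula; for the converse ($C_X = \emptyset$ implies $X$ is maximal) I would observe that the hypothesis collapses $V(G)\setminus X$ onto $D_X$, so that maximality follows immediately from the very definition of odd-maximality. Throughout, write $d := |V(G)| - 2\nu(G)$ for the maximum value in the Berge formula, so that a set $Y\subseteq V(G)$ is a barrier exactly when $q_G(Y) - |Y| = d$.

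For the forward direction, suppose toward a contradiction that $C_X \neq \emptyset$ and pick a vertex $c\in C_X$; by definition $c$ lies in some even component $K$ of $G-X$. First I would observe that passing from $G-X$ to $G-(X\cup\{c\})$ leaves every component other than $K$ untouched and replaces $K$ by the components of $K-c$, so that $q_G(X\cup\{c\}) = q_G(X) + t$, where $t$ is the number of odd components of $K-c$. Since $|K-c| = |K|-1$ is odd and the number of odd components of a graph has the same parity as its vertex count, $t$ is odd, hence $t\geq 1$. Plugging into the Berge count gives $q_G(X\cup\{c\}) - |X\cup\{c\}| = (q_G(X) - |X|) + (t-1) = d + (t-1) \geq d$. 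As $d$ is the maximum, this forces $t=1$ and equality, so $X\cup\{c\}$ is again a barrier strictly containing $X$, contradicting maximality. I would note that this direction uses only that $X$ is a barrier, not that it is odd-maximal.

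For the converse, suppose $C_X = \emptyset$. Then $V(G) = X \cup D_X$, so $V(G)\setminus X = D_X$. Any barrier $X'$ with $X \subsetneq X'$ would then have the form $X' = X \cup Y$ with $Y := X'\setminus X$ a nonempty subset of $D_X$; but the definition of an odd-maximal barrier forbids exactly this. Hence no such $X'$ exists and $X$ is maximal.

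The only real bookkeeping is the parity argument in the forward direction; the main point to get right is that deleting a single vertex from an even component necessarily creates at least one odd component, and that the resulting set still \emph{attains} — rather than overshoots — the Berge maximum. I would also double-check that the forward direction genuinely needs only that $X$ is a barrier (not that it is odd-maximal), since that clean split between the two hypotheses is what makes the statement an honest equivalence.
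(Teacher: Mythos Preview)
Your proof is correct and follows the same route as the paper: one direction is immediate from the definition of odd-maximality, and for the other you add a vertex $c\in C_X$ to $X$ and show the resulting set is still a barrier, contradicting maximality. The paper simply asserts that $X\cup\{u\}$ is a barrier, whereas you spell out the parity argument behind it; your observation that this direction needs only that $X$ is a barrier (not odd-maximal) is also correct.
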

\begin{proof}
The necessity part is obvious by the definition. 
For the sufficiency part, 
let $C_X \neq \emptyset $ and take $u\in C_X$ arbitrarily. 
Then $X\cup \{u\}$ is also a barrier of $G$, 
contradicting $X$ being a maximal one. 
\qed
\end{proof}

\begin{proposition}[see \cite{lp1986} or \cite{kiraly1998}]%
\label{prop:elembarrier}
Let $G$ be an elementary graph and $X$ be a barrier of $G$. 
Then, $C_X = \emptyset$. 
\end{proposition}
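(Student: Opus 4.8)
The plan is to argue by contradiction, showing that a nonempty even component of $G - X$ would have to be a union of factor-components, which is impossible in an elementary graph. First I would suppose $C_X \neq \emptyset$ and pick a connected component $K$ of $G[C_X]$, so that $K$ is one of the even components of $G-X$. The guiding principle is that elementarity must be exploited not through the (false) expectation that every edge of $G$ is allowed, but through the fact that $G$ has a single factor-component, i.e.\ that the allowed edges form a connected spanning subgraph of $G$.

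Next I would locate every edge of $G$ that leaves $V(K)$. Since $K$ is a connected component of $G - X$, any edge of $G$ with exactly one end in $V(K)$ must have its other end in $X$; in particular each such edge lies in $E[X, C_X]$. By Proposition~\ref{prop:barrier}(iv) no edge of $E[X, C_X]$ is allowed. Consequently no allowed edge of $G$ joins $V(K)$ to $\Vg \setminus V(K)$; equivalently, for every perfect matching $M$ of $G$ (one exists since $G$ is elementary, hence factorizable) we have $\delta(V(K)) \cap M = \emptyset$.

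I would then invoke Proposition~\ref{prop:separating}: the condition $\delta(V(K))\cap M = \emptyset$ for all perfect matchings $M$ forces $V(K)$ to be separating, hence a nonempty union of factor-components of $G$. Because $G$ is elementary, its only factor-component is $G$ itself, so $V(K) = \Vg$. But then $G - X$ consists of the single component $K$ covering all vertices, which is possible only if $X = \emptyset$; for a nonempty barrier this is the desired contradiction, establishing $C_X = \emptyset$. The residual degenerate case $X = \emptyset$ (where indeed $C_X = \Vg$) is trivial and should simply be set aside rather than forced into the argument.

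The step I expect to be the main obstacle is the very first conceptual one: resisting the natural but incorrect instinct to say ``$G$ is elementary, so the edge from $K$ to $X$ is allowed, contradicting Proposition~\ref{prop:barrier}(iv)''. Elementary graphs may still carry inadmissible edges, so the contradiction cannot come from a single edge; it must instead come from the global structural statement (via Proposition~\ref{prop:separating}) that $V(K)$ is separating and therefore a union of factor-components, which collides with the defining single-factor-component property of elementary graphs.
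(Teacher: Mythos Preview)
Your proof is correct and follows essentially the same route as the paper's: both argue by contradiction from Proposition~\ref{prop:barrier}(iv), observing that the non-allowedness of $E[X,C_X]$ separates $C_X$ from the rest and hence violates elementarity. The paper compresses this into a single line (``we can see that $G$ is not elementary''), whereas you spell out the separating-set step via Proposition~\ref{prop:separating}; your caveat about the degenerate case $X=\emptyset$ is well taken and is equally glossed over in the paper.
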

\begin{proof}
If $C_X \neq \emptyset$, then 
since no the edges of $E[X, C_X]$ are allowed as stated in Proposition~\ref{prop:barrier}, 
we can see that $G$ is not elementary, a contradiction. 
\qed
\end{proof}

\begin{proposition}\label{prop:elemoddmaximal}
For an elementary graph $G$, 
if $X\subseteq V(G)$ is an odd-maximal barrier  
then it is also a maximal barrier. 
\end{proposition}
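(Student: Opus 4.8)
The plan is to obtain this as an immediate consequence of the two preceding propositions, observing that by definition an odd-maximal barrier is in particular a barrier, so both prior results apply to $X$ with no further work. The whole argument is a chaining of the facts ``$C_X = \emptyset$ for barriers of elementary graphs'' and ``an odd-maximal barrier with $C_X = \emptyset$ is maximal''.

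First I would invoke Proposition~\ref{prop:elembarrier}. Since $G$ is elementary and $X$ is a barrier of $G$ (which holds because $X$ is odd-maximal, and odd-maximal barriers are barriers), that proposition gives $C_X = \emptyset$. This is precisely the hypothesis needed to pin down which odd-maximal barriers are maximal.

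Next I would apply Proposition~\ref{prop:oddmaximal2maximal} to $X$. That proposition states, for any graph and any odd-maximal barrier $X$, that $X$ is maximal if and only if $C_X = \emptyset$. Having just established $C_X = \emptyset$, the ``if'' direction yields that $X$ is a maximal barrier, which is exactly the claim.

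I expect there to be no genuine obstacle here: the only things to check are that the hypotheses of the two cited propositions are satisfied, namely that $X$ is a barrier (immediate from odd-maximality) and that $G$ is elementary (given by assumption). Since both are trivially in hand, the proof is a one-line composition of the two prior results, and no structural or matching-theoretic lemma beyond those already proved is required.
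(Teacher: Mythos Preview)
Your proposal is correct and matches the paper's own proof exactly: the paper's argument is simply ``This is by combining Proposition~\ref{prop:oddmaximal2maximal} and Proposition~\ref{prop:elembarrier},'' which is precisely the chaining you describe.
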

\begin{proof}
This is by combining Proposition~\ref{prop:oddmaximal2maximal}
and Proposition~\ref{prop:elembarrier}. 
\qed
\end{proof}

Since maximal barriers are apparently odd-maximal barriers by the definitions, 
now we have that these two notions are equivalent for elementary graphs
 by Proposition~\ref{prop:elemoddmaximal}. 
\subsection*{Why It Suffices to Work on Factorizable Graphs} 
The following statements, leading to Proposition~\ref{prop:reduction2factorizable_nofoot}, 
show that 
in order to know canonical structures of odd-maximal barriers in general graphs, 
it suffices to work on factorizable graphs. 
%
\begin{proposition}[folklore, see \cite{lp1986} or \cite{kiraly1998}]%
\label{prop:increment}
Let $G$ be a graph, 
$X\subseteq V(G)$ be a barrier of $G$, 
and $Y\subseteq V(G)$ be such that $X\subseteq Y$. 
Then, 
$Y$ is a barrier of $G$ 
if and only if $Y\setminus X$ is a union of barriers of 
some connected components of $G-X$. 
\end{proposition}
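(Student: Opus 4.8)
The plan is to reduce everything to the Berge formula applied separately to $G$ and to the connected components of $G-X$. I write $d := |V(G)| - 2\nu(G)$ for the deficiency, let $C_1,\dots,C_m$ be the connected components of $G-X$, and set $d_i := |V(C_i)| - 2\nu(C_i)$. For a candidate set $Y = X \dot{\cup} W$ with $W \subseteq V(G)\setminus X$, I put $W_i := W \cap V(C_i)$. The first step is the bookkeeping identity $q_G(Y) - |Y| = -|X| + \sum_{i=1}^{m}\bigl(q_{C_i}(W_i) - |W_i|\bigr)$, which holds because deleting $Y$ from $G$ amounts to deleting $W_i$ from each $C_i$, so the odd components of $G-Y$ are exactly those contributed by the various $C_i - W_i$; this is immediate from the definitions.

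The crucial step is the identity $\sum_{i=1}^m d_i = d + |X|$. I would obtain it by maximizing the bookkeeping identity over $W$: by the Berge formula applied to each $C_i$ we have $q_{C_i}(W_i) - |W_i| \le d_i$, with equality exactly when $W_i$ is a barrier of $C_i$, so $\max_W\bigl(q_G(Y)-|Y|\bigr) = -|X| + \sum_i d_i$. On the other hand this maximum is taken over sets $Z$ with $Z\supseteq X$, hence cannot exceed the global maximum $d$; and it equals $d$ because $X$ itself attains it (the choice $W=\emptyset$ gives $q_G(X)-|X| = d$). Thus $-|X| + \sum_i d_i = d$. A pleasant by-product, which I record because it settles the wording ``some components'', is that since $d_i \ge 1$ for odd $C_i$ and $d_i \ge 0$ for even $C_i$, while $\sum_i d_i = d + |X| = q_G(X)$ equals the number of odd components, every odd $C_i$ must have $d_i = 1$ and every even $C_i$ must have $d_i = 0$; consequently $q_{C_i}(\emptyset) = d_i$, i.e. $\emptyset$ is a barrier of each $C_i$.

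With these in hand the equivalence is a tightness argument. Combining the two identities yields $q_G(Y) - |Y| = d + \sum_i\bigl(q_{C_i}(W_i) - |W_i| - d_i\bigr)$, where every summand is $\le 0$. Hence $Y$ is a barrier, i.e. $q_G(Y) - |Y| = d$, if and only if each summand vanishes, i.e. $q_{C_i}(W_i) - |W_i| = d_i$ for all $i$, i.e. $W_i$ is a barrier of $C_i$ for every $i$. Since barriers of distinct components lie in disjoint vertex sets, this says precisely that $W = Y\setminus X$ is a disjoint union of barriers of the components of $G-X$; on the components that $W$ misses we have $W_i = \emptyset$, which is a barrier by the by-product above, so nothing is lost by the phrase ``some connected components''.

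The main obstacle is the key identity $\sum_i d_i = d + |X|$: it is the only place where the hypothesis that $X$ is a barrier enters, and it rests on the observation that restricting the Berge maximum to supersets of $X$ still yields $d$ precisely because $X$ is an optimal set. Everything else is additive bookkeeping together with a standard equality-in-the-bound argument, and no special structure of $G$ (such as factorizability) is required.
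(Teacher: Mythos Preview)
Your argument is correct. The paper itself gives no proof of this proposition---it is merely stated as folklore with a reference to \cite{lp1986} and \cite{kiraly1998}---so there is nothing to compare against; your Berge-formula bookkeeping together with the tightness identity $\sum_i d_i = d + |X|$ is exactly the standard way to see it, and your handling of the phrase ``some connected components'' via the observation that $\emptyset$ is a barrier of every component is clean and appropriate.
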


Given a graph $G$, 
we define $D(G)$ as the set of vertices 
that can be respectively exposed by maximum matchings,
$A(G)$ as $\Gamma(D(G))$
and $C(G)$ as 
$V(G)\setminus (D(G)\cup A(G))$. 
There is a well-known  theorem stating that 
$A(G)$ forms a barrier with special properties, 
called the {\em Gallai-Edmonds structure theorem}~\cite{lp1986}; 
the next one is a part of it. 
\begin{proposition}\label{prop:ge}
Let $G$ be a graph. 
Then, $A(G)$ is an odd-maximal barrier of $G$ 
such that $D_{A(G)} = D(G)$ and $C_{A(G)} = C(G)$.  
\end{proposition}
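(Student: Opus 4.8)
The plan is to derive the three assertions directly from the classical Gallai-Edmonds structure theorem, of which this proposition is the quoted fragment; the only real work is to translate its conclusions into the paper's barrier terminology and then to invoke Proposition~\ref{prop:odd-maximal} for odd-maximality. First I would record the standard structural output of that theorem: every connected component of $G[D(G)]$ is factor-critical, hence has an odd number of vertices; $G[C(G)]$ is factorizable, so each of its components is even; and the deficiency identity $|V(G)| - 2\nu(G) = c - |A(G)|$ holds, where $c$ denotes the number of connected components of $G[D(G)]$.

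Next I would exploit the definitional fact that, since $A(G) = \Gamma(D(G))$, no edge of $G$ joins $D(G)$ to $C(G)$; consequently $G - A(G)$ is exactly the disjoint union $G[D(G)] \,\dot{\cup}\, G[C(G)]$. From the parity statements just recorded, the odd components of $G - A(G)$ are precisely the components of $G[D(G)]$, while the components of $G[C(G)]$ are all even. This immediately gives $D_{A(G)} = D(G)$ and $C_{A(G)} = C(G)$, together with $q_G(A(G)) = c$. Substituting into the deficiency identity yields $q_G(A(G)) - |A(G)| = |V(G)| - 2\nu(G)$, so $A(G)$ attains the maximum in the Berge formula and is therefore a barrier.

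Finally, odd-maximality follows without further computation: the odd components of $G - A(G)$ are exactly the components of $G[D(G)]$, each factor-critical, so Proposition~\ref{prop:odd-maximal} certifies that the barrier $A(G)$ is odd-maximal. The hard part here is not conceptual but a matter of careful bookkeeping --- one must be sure that $q_G(A(G))$ counts nothing other than the components of $G[D(G)]$, which rests on the separation fact that $D(G)$ and $C(G)$ share no edge, combined with the parity supplied by factor-criticality on the $D(G)$ side and by the perfect matching of $G[C(G)]$ on the $C(G)$ side. Once this identification is pinned down, the remaining deductions are routine.
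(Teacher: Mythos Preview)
Your proposal is correct and matches the paper's approach: the paper does not give an explicit proof of this proposition but simply presents it as a part of the Gallai--Edmonds structure theorem cited from \cite{lp1986}, and your argument is precisely the unpacking of that citation into the paper's barrier language, finishing with Proposition~\ref{prop:odd-maximal} for odd-maximality.
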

Additionally, Kir\'aly shows that $A(G)$ 
is the minimum odd-maximal barriers in any graph $G$. 
%
%
\begin{theorem}[Kir\'ary~\cite{kiraly1998}]\label{thm:intersection}
Let $G$ be a graph, 
and $\mathcal{X}\subseteq 2^{V(G)}$ be the family of the odd-maximal barriers of $G$. 
Then, $\bigcap_{X\in \mathcal{X}}X = A(G)$. 
\end{theorem}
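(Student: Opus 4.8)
The plan is to prove the two inclusions of $\bigcap_{X\in\mathcal{X}}X = A(G)$ separately, with the bulk of the work concentrated in one of them. The inclusion $\bigcap_{X\in\mathcal{X}}X \subseteq A(G)$ is immediate: by Proposition~\ref{prop:ge} the set $A(G)$ is \emph{itself} an odd-maximal barrier, so $A(G)\in\mathcal{X}$ and the intersection is contained in it. Hence everything reduces to showing that $A(G)$ is the \emph{minimum} odd-maximal barrier, i.e. $A(G)\subseteq X$ for every odd-maximal barrier $X$.

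First I would record the tight behaviour of a maximum matching with respect to an arbitrary barrier, obtained by the standard counting argument behind the easy direction of the Berge formula. If $X$ is a barrier and $M$ a maximum matching, then the $|V(G)|-2\nu(G)=q_G(X)-|X|$ exposed vertices force equality in the inequality ``number of $M$-exposed vertices $\ge q_G(X)-|X|$''; this equality can only hold if every vertex of $X$ is covered and matched to a \emph{distinct} odd component of $G-X$, every even component of $G-X$ is perfectly matched, and consequently every $M$-exposed vertex lies in an odd component of $G-X$ receiving no edge of $M$ from $X$ (a ``free'' odd component). Two consequences drop out at once: since a vertex $d\in D(G)$ is exposed by some maximum matching, and that matching covers all of $X$, we get $D(G)\cap X=\emptyset$; and since such an exposed $d$ must sit in an odd component of $G-X$, we get $D(G)\subseteq D_X$.

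Now fix $v\in A(G)=\Gamma(D(G))$ and suppose, for contradiction, that $v\notin X$. Choose a neighbour $d\in D(G)$ of $v$. Since $d\notin X$ (by $D(G)\cap X=\emptyset$) and $v\notin X$, the edge $vd$ lies in $G-X$, so $v$ and $d$ belong to a common component $K$ of $G-X$; as $d\in D_X$, this $K$ is odd, hence factor-critical by Proposition~\ref{prop:odd-maximal}. The heart of the argument is to show $K\subseteq D(G)$: this gives $v\in D(G)$, contradicting $A(G)\cap D(G)=\emptyset$ and finishing the inclusion. To prove $K\subseteq D(G)$, take any $u\in K$ and a maximum matching $M_0$ exposing $d$; by the tight structure $K$ is the free odd component containing the exposed $d$, so $M_0$ sends no edge into $K$ from $X$ and $M_0\cap E(G[K])$ is a near-perfect matching of $K$ exposing exactly $d$. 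Because $K$ is factor-critical it also admits a near-perfect matching $N$ exposing $u$, and replacing $M_0\cap E(G[K])$ by $N$ yields a matching of $G$ of the same cardinality $\nu(G)$ that now exposes $u$; thus $u\in D(G)$, and as $u\in K$ was arbitrary, $K\subseteq D(G)$.

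I expect the main obstacle to be making the counting lemma of the second paragraph fully rigorous and, in particular, extracting exactly the statement that the exposed vertex of $M_0$ lives inside $K$ as a \emph{free} odd component; this is precisely what licenses the clean surgery of swapping out the internal matching of $K$ in the third paragraph, which in turn relies on $K$ carrying no $M_0$-edge to its boundary. Factor-criticality of $K$, supplied by odd-maximality through Proposition~\ref{prop:odd-maximal}, is the one place where the odd-maximal (rather than merely maximal) hypothesis is genuinely used.
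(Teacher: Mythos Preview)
The paper does not give its own proof of this theorem; it is stated as a result of Kir\'aly and simply cited. Your proposal therefore cannot be compared against a proof in the paper, but it stands on its own and is correct.

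A few remarks for tightening. Your ``counting lemma'' is exactly the standard tight-equality analysis of a maximum matching versus a barrier (the non-factorizable analogue of Proposition~\ref{prop:barrier}), and it does yield precisely what you need: every vertex of $X$ is covered, $M$-exposed vertices lie in odd components of $G-X$, and each such ``free'' odd component contains exactly one exposed vertex and receives no $M$-edge from $X$. From this, $D(G)\cap X=\emptyset$ and $D(G)\subseteq D_X$ follow as you say. The surgery step is clean once you observe that an odd component $K$ of $G-X$ has $\delta(K)\subseteq E[K,X]$, so ``free'' really means $\delta(K)\cap M_0=\emptyset$ and the swap $(M_0\setminus E(G[K]))\cup N$ is a valid matching of the same size. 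Finally, $A(G)\cap D(G)=\emptyset$ is immediate from the definition $A(G)=\Gamma(D(G))\subseteq V(G)\setminus D(G)$. With these points made explicit, the argument is complete; the odd-maximality hypothesis is used exactly where you indicate, through Proposition~\ref{prop:odd-maximal}.
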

Therefore, combining up 
Proposition~\ref{prop:increment} and Proposition~\ref{prop:ge},
 and Theorem~\ref{thm:intersection}, 
we can see the following: 
\begin{proposition}
Let $G$ be a graph.
A set of vertices $S\subseteq V(G)$ is an odd-maximal barrier of $G$ 
if and only if it is a disjoint union of $A(G)$ and  
an odd-maximal barrier of the factorizable subgraph $G[C(G)]$. 
Now let $S$ be an odd-maximal barrier. 
Then,  the odd components of $G-S$ 
are the components of $G[D(G)]$ 
and the odd components of $G[C(G)]-(S\setminus A(G))$. 
\end{proposition}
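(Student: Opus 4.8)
The plan is to peel the minimal barrier $A(G)$ off an arbitrary odd-maximal barrier and recognize the remainder as an odd-maximal barrier of $G[C(G)]$, using exactly the three cited ingredients. First I would record that every odd-maximal barrier $S$ satisfies $A(G)\subseteq S$, which is immediate from Theorem~\ref{thm:intersection}, since $A(G)$ is the intersection of all odd-maximal barriers. Because $A(G)$ is itself a barrier by Proposition~\ref{prop:ge}, I may then apply Proposition~\ref{prop:increment} with the barrier $A(G)$ and the larger set $S$, concluding that $B:=S\setminus A(G)$ is a union of barriers of the connected components of $G-A(G)$. By Proposition~\ref{prop:ge} these components are the factor-critical odd components constituting $D(G)=D_{A(G)}$ together with the components of $G[C(G)]=C_{A(G)}$, so $B$ splits as $B_D\,\dot{\cup}\,B_C$ with $B_D\subseteq D(G)$ and $B_C\subseteq C(G)$, each still a union of barriers of the respective components.

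The crux is to show $B_D=\emptyset$, and this is the step that genuinely uses odd-maximality rather than mere maximality. Here I would run Proposition~\ref{prop:increment} in the other direction: since $B_D$ is a union of barriers of components of $G-A(G)$, the set $A(G)\cup B_D$ is again a barrier of $G$; but $B_D\subseteq D_{A(G)}$, so if $B_D$ were nonempty this would contradict the defining property of the odd-maximal barrier $A(G)$ from Proposition~\ref{prop:ge}. Hence $B=B_C\subseteq C(G)$. Applying Proposition~\ref{prop:increment} once more, now inside the factorizable graph $G[C(G)]$ with the empty barrier (which is a barrier there because $G[C(G)]$ has a perfect matching by the Gallai--Edmonds structure theorem), a union of barriers of the components of $G[C(G)]$ is the same thing as a barrier of $G[C(G)]$; thus $B$ is precisely a barrier of $G[C(G)]$. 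I expect this $B_D=\emptyset$ argument to be the only non-formal point, everything else being bookkeeping.

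It remains to transfer odd-maximality and to read off the odd components. Since $A(G)=\Gamma(D(G))$, deleting $A(G)$ already separates $D(G)$ from $C(G)$, so removing the further vertices $B\subseteq C(G)$ leaves the components of $G[D(G)]$ untouched; consequently the odd components of $G-S$ are exactly the (factor-critical) components of $G[D(G)]$ together with the odd components of $G[C(G)]-B$, which is the final assertion. By Proposition~\ref{prop:odd-maximal} applied to $G$ and then to $G[C(G)]$, the set $S$ is odd-maximal iff every odd component of $G-S$ is factor-critical, iff every odd component of $G[C(G)]-B$ is factor-critical (those coming from $D(G)$ being factor-critical automatically), iff $B$ is an odd-maximal barrier of $G[C(G)]$. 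For the converse I would simply reverse these equivalences: starting from $S=A(G)\,\dot{\cup}\,B$ with $B$ an odd-maximal barrier of $G[C(G)]$, Proposition~\ref{prop:increment} certifies that $S$ is a barrier of $G$, and Proposition~\ref{prop:odd-maximal} upgrades it to odd-maximality.
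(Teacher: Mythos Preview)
Your proof is correct and follows essentially the same approach the paper intends: the paper's own argument is only the one-line remark that the statement follows ``combining up Proposition~\ref{prop:increment} and Proposition~\ref{prop:ge}, and Theorem~\ref{thm:intersection}'', and you have accurately fleshed out that combination. The one extra ingredient you invoke, Proposition~\ref{prop:odd-maximal}, is the natural way to transfer odd-maximality between $G$ and $G[C(G)]$ and is exactly what is needed to complete the sketch.
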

Therefore, 
we can see that to obtain the structure of odd-maximal barriers
and the odd components associated with them in general graphs,  
it suffices to investigate factorizable graphs.

\end{document}